\newcommand{\norm}[1]{\left\Vert#1\right\Vert}
\newcommand{\abs}[1]{\left\vert#1\right\vert}
\newcommand{\set}[1]{\left\{#1\right\}}
\newtheorem{theorem}{Theorem}[section]
\newtheorem{corollary}[theorem]{Corollary}
\newtheorem{lemma}[theorem]{Lemma}
\newtheorem{remark}[theorem]{Remark}
\newtheorem{example}[theorem]{Example}
\numberwithin{equation}{section}
\numberwithin{table}{section}
\numberwithin{figure}{section}
\begin{document}
	
\title[Spectral methods for fractional reaction-diffusion equations]{Regularity and  spectral methods for  two-sided    fractional  diffusion equations  with a low-order term}

\author{Zhaopeng Hao}
	\address{Department of Mathematics, Southeast University, Nanjing 210096, P.R. China. } 
	\email{haochpeng@126.com} 
\author{Guang Lin}
			\address{Department of Mathematics and School of Mechanical Engineering, Purdue University,  West Lafayette, IN 47907 USA.}
 		   \email {guanglin@purdue.edu}
  \author{Zhongqiang Zhang}
\address{Department of Mathematics, Worcester Polytechnic Institute, Worcester, Ma 01609 USA.   Corresponding Author}
 \email {zzhang7@wpi.edu
 }

\date{\today}
\maketitle
\begin{abstract}
 We study  regularity and numerical  methods for  two-sided  fractional  diffusion equations with a lower-order term.  
 We show that the regularity of the solution  
 in weighted Sobolev spaces 
 can be  greatly improved  compared to  that in standard Sobolev spaces. 
 With this regularity, we improve  higher-order
 convergence   of a  spectral Galerkin method. We   present  a spectral  Petrov-Galerkin  method  and provide an optimal error estimate for the  Petrov-Galerkin  method. 
 Numerical results are presented to verify our  theoretical convergence orders. 	
\keywords{Regularity\and Pseudo eigenfunctions\and  Non-uniformly weighted Sobleve spaces\and  Spectral methods \and Optimal error estimates\and Riemann-Liouville fractional operators} 			
	\end{abstract}
	
\noindent\textbf{AMS Subject Classification (2010)} 
	35B65, 65M70, 65M12, 41A25, 26A33


 \section{Introduction}
 Anomalous diffusion has been widely used to  investigate  transport dynamics in complex systems, such as underground environmental problem \cite{HatanoH98}, fluid flow in porous materials \cite{BensonSMW01},  anomalous transport in biology \cite{HoflingF13}, etc.  Many mathematical models are developed to study anomalous diffusion. Some of these models are based on a linear equation for diffusion on fractals \cite{O'Shaughnessy85},  a linear differential Fisher's information theory \cite{Ubriaco09} and Levy description of anomalous diffusion in dynamical systems\cite{KlafterZS95}. In particular,  fractional differential equations (FDEs) can serve as an accurate model of the anomalous diffusion, e.g.  super-diffusion process in \cite{MetzlerK00}.
 
 Explicit solutions for FDEs are  mostly 
 not available and thus it is essential to develop efficient numerical methods. Extensive numerical  methods have been investigated in recent decades
 e.g.  finite difference method \cite{HaoSC15,LiuAT04,MeerschaertT04,TianZD15,WangB12}, finite element method \cite{ErvinHR16,ErvinR06,JinLPR15,WangY13}, spectral method \cite{ChenSW16,ErvinHR16,LiX10,MaoCS16,HuangJWZ16,ZayernouriCZK14,ZayernouriK14}, discontinuous Galerkin method  \cite{XuH14},  finite volume method \cite{SimmonsYM17} etc.

 Despite of rich numerical methods for FDEs,  regularity of   solutions to FDEs is  not thoroughly investigated, especially the regularity    well suited 
 for error analysis.  In literature, it is assumed that 
   solutions are sufficiently smooth. However,   it has been
   pointed out in \cite{ChenSW16,JinLPR15,KoptevaS16,WangY13} that the  regularity of solutions to FDEs  can be very low.

 In this paper, we consider the following two-sided  fractional  diffusion equation with a reaction term
 \begin{equation}\label{eq:two-sided}
 \mathcal{L}_{\theta}^{\alpha}u+\mu u= f(x), \quad x\in I=(a,b),
 \end{equation}
 with homogeneous  boundary conditions
 \begin{equation}\label{eq:fode-riesz-non-symm-bc}
 u(a)=u(b)=0,
 \end{equation}
 and a given function  $f(x),$ 
 $\mathcal{L}_{\theta}^{\alpha}:=-[\theta\,  _{a}D_x^{\alpha}+(1-\theta)\,  _xD_b^{\alpha}] $ 
 with $\theta \in [0,1]$ and $\alpha \in (1,2) .$ Here   $_{a}D_x^{\alpha} $ and $ _xD_b^{\alpha}$ are left- and right-sided Riemann-Liouville operators,   defined as follows (see e.g.  \cite{Podlubny99,SamkoKM93})
  \begin{equation}\label{def-1}
 _{a}D_x^{\alpha} u(x) =\frac{1}{\Gamma(2-\alpha)}、\frac{d^2}{dx^2}\int_{a}^x \frac{u(\xi) }{(x-\xi)^{\alpha-1}} d\xi ,\quad x>a, 
 \end{equation}
 and
 \begin{equation}\label{def-1R}
 _xD_b^{\alpha} u(x) =\frac{1}{\Gamma(2-\alpha)}\frac{d^2}{dx^2}\int_x^b \frac{u(\xi) }{(\xi-x)^{\alpha-1}} d\xi ,\quad x<b.
 \end{equation}

 Ervin and Roop \cite{ErvinR06}  established uniqueness and  existence of  the  solution to \eqref{eq:two-sided}. 
 Ervin et al.  \cite{ErvinHR16} discussed the regularity of the solution  in standard Sobolev space for $\mu =0$ in \eqref{eq:two-sided}.  

 This work devotes to the study of regularity and  spectral methods for    \eqref{eq:two-sided}.  From the  weakly singular kernel  of fractional derivatives, solutions of fractional differential equations naturally  inherit   weak singularity.
 According to \cite{ErvinHR16}, the solution to \eqref{eq:two-sided} when $\mu=0$ can be written as the product
 $(1-x)^\gamma(1+x)^\beta \tilde{u}$, $\gamma$ and $\beta$ are 
 some constants depending on $\alpha$ and $\theta$ in \eqref{eq:two-sided}.  
 Based on the relation  in  Lemma \ref{lem:frac-non-symm-jacobi}, we expand the function $\tilde{u}$ using Jacobi polynomials and apply Fourier-type analysis of $\tilde{u}$.  For $\tilde{u}$, we show that the solution has a limited regularity, more precisely  $2\alpha+1$ in non-uniformly weighted Sobolev space even for smooth   $f$.  This work is motivated by   \cite{Zhang17} which  gave the regularity for the fractional diffusion equation with   fractional Laplacian by Fourier analysis and bootstrapping technique.
 We also notice that in    \cite{MaoKar17}, a detailed error estimate for  a Petrov-Galerkin method is given when $\mu=0$. 
 All these works can be considered as   special cases in the current work.

 We present  a spectral Petrov-Galerkin  method which is discussed in \cite{ErvinHR16,MaoKar17} where $\mu=0$ and also a spectral Galerkin  method.   Based on the obtained regularity, we prove error estimates of these methods. 
 For the spectral Galerkin method, an optimal error estimate is  obtained in  the case  $\theta=0.5$, similar to that in \cite{Zhang17}.  
 We  get optimal error estimate for the spectral Petrov-Galerkin method when $\mu$ is small; see Theorem  \ref{thm:spectral-error-estimate-2}.

 The rest of this paper is arranged as follows. In Section 2,
 we introduce some basic notations and  recall some properties
 of Jacobi polynomials and non-uniformly weighted Sobolev 
 spaces. Some lengthy but important auxiliary materials are
 presented in Appendix.   In Section 3, we present the
 regularity of the two-sided fractional diffusion equations
 using   Fourier type analysis   and a bootstrapping
 technique. We consider a spectral Galerkin method  and carry
 out its error analysis in Section 4.  In Section 5, we present a Petrov-Galerkin method  and provide error estimates as well. Several numerical results are showed to verify   the theoretical convergence order in Section 6.  Finally, we make some concluding remarks.

 \section{Preliminary}
 
 We consider the interval $I=(-1,1)$ for  simplicity.  Denote by $L^2_{\omega^{\gamma,\beta}} (I)$ the space with 
 the inner product and norm defined by
 $$(u,v)_{\omega^{\gamma,\beta},I}=\int_{I}{uv \omega^{\gamma,\beta}}dx,\quad \|u\|_{\omega,I}=(u,v)_{\omega,I}^{\frac{1}{2}} ,$$
 where $\omega^{\gamma,\beta}=(1-x)^\gamma(1+x)^\beta$, $\gamma,\beta>-1$. When $\gamma=\beta=0$, we will drop $\omega$ from the above notations. We  also drop  the domain $I$ from the notation for simplicity   without  incurring confusion.

 The Jacobi polynomials $P_n^{\gamma,\beta} (x)$ are mutually orthogonal:
 for $\gamma,\ \beta > -1$,
 \begin{equation}\label{eq:orthogonality-of-jacobi-poly}
 \int_{-1}^1 (1-x)^{\gamma} (1+x)^{\beta} P_m^{\gamma, \beta} (x)P_n^{\gamma,\beta} (x) \, dx =h_n^{\gamma,\beta}  \delta_{nm}.
 \end{equation}
 Here  $\delta_{nm}$ is equal to $1$ if $n=m$ and  zero otherwise,  and
 \begin{equation}\label{eq:orthogonality-of-jacobi-poly-normalized-constant}
 h_n^{\gamma,\beta}=\norm{P_n^{\gamma,\beta}}_{\omega^{\gamma,\beta}}^2 =\frac{2^{\gamma + \beta+1}}{2n+\gamma + \beta+1} \times \frac{\Gamma(n+\gamma+1)\Gamma(n+\beta+1)}{\Gamma(n+\gamma + \beta+1)n!}.
 \end{equation}

 To   incorporate singularities at the endpoints,   we  introduce the following non-uniformly   weighted Sobolev space, see e.g.  \cite{GuoW04},
 \begin{equation}
 B^{m}_{\omega^{\gamma,\beta}} :=\set{u \,|\, \partial_x^ku \in L^2_{\omega^{\gamma+k,\beta+k}},\, k=0,1,\ldots,m},~m\,\mbox{is a  nonnegative integer}
 \end{equation}
 which is equipped with the following norm
 \begin{equation}
 \norm{u}_{B^{m}_{\omega^{\gamma,\beta}}} =\big( \sum_{k=0}^m \abs{u}_{B^{k}_{\omega^{\gamma,\beta}}}^2\big )^{1/2}, \quad   \abs{u}_{B^{k}_{\omega^{\gamma,\beta}}}=\norm{\partial_x^k u}_{\omega^{\gamma+k,\beta+k}}.
 \end{equation}
 When $m$ is not an integer,   the space is defined  by interpolation, see e.g. \cite{GuoW04}.

 
 The following 
 \textit{pseudo-eigenfunctions} for the fractional diffusion operator in \cite{ErvinHR16} are essential to analyze the regularity.   
 \begin{lemma}[\cite{ErvinHR16,MaoKar17}]\label{lem:frac-non-symm-jacobi}
 	For the $n$-th order Jacobi polynomial $P_n^{\sigma,\,\sigma^{\ast}}(x)$, it holds that
 	\begin{eqnarray}
 	\mathcal{L}_{\theta}^{\alpha}[\omega^{\sigma,\sigma^{\ast}}(x) P_n^{\sigma,\sigma^{\ast}}(x)] &=& \lambda_{\theta,n}^{\alpha}P_n^{\sigma^{\ast},\,\sigma}(x),\\
 	\mathcal{L}_{\theta}^{\alpha}[\omega^{\sigma-1,\sigma^{\ast}-1}(x)]&=&0,
 	\end{eqnarray}
 	where  $$\lambda_{\theta,n}^{\alpha}= -\frac{\sin(\pi \alpha)}{\sin(\pi \sigma)+\sin(\pi\sigma^{\ast}) } \times \frac{\Gamma(\alpha+n+1)}{n!},$$ 
    and $\sigma^{\ast}=\alpha-\sigma$  and  $\sigma$ is determined by the following equation:
 	\begin{equation}\label{eq:fracal-sigma}
 	\theta=\frac{\sin(\pi \sigma^{\ast})}{\sin(\pi\sigma^{\ast})+\sin(\pi \sigma)}.
 	\end{equation}
 \end{lemma}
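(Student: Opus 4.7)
The plan is to reduce the lemma to two Bateman-type fractional differentiation formulas for weighted Jacobi polynomials and then exploit the constraint $\sigma+\sigma^{\ast}=\alpha$ together with the defining equation for $\theta$ to collapse the result.

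First I would record the building-block identities. Starting from the Rodrigues representation of Jacobi polynomials and integrating by parts (or quoting the standard table in Samko--Kilbas--Marichev), one obtains
\begin{equation*}
{}_{-1}D_x^{\alpha}\bigl[(1-x)^{\sigma}(1+x)^{\sigma^{\ast}}P_n^{\sigma,\sigma^{\ast}}(x)\bigr]=\frac{\Gamma(n+\sigma+1)}{\Gamma(n+\sigma+1-\alpha)}\,(1-x)^{\sigma-\alpha}(1+x)^{\sigma^{\ast}}P_n^{\sigma-\alpha,\,\sigma^{\ast}+\alpha}(x)\cdot(\cdots),
\end{equation*}
and a mirror-image formula for ${}_xD_1^{\alpha}$, obtained by the reflection $x\mapsto -x$ (which swaps the roles of $\sigma$ and $\sigma^{\ast}$ and of the two endpoints). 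Using $\sigma+\sigma^{\ast}=\alpha$, the weight on the right reduces to $(1-x)^{-\sigma^{\ast}}(1+x)^{-\sigma}$ times a polynomial which, by the Jacobi-index identity $P_n^{\sigma-\alpha,\sigma^{\ast}+\alpha}=P_n^{-\sigma^{\ast},\sigma^{\ast}}$ (index sum preserved), turns out to coincide with $P_n^{\sigma^{\ast},\sigma}(x)$ up to a ratio of gamma functions that can be rewritten via the reflection formula $\Gamma(z)\Gamma(1-z)=\pi/\sin(\pi z)$.

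Next, I would combine the two one-sided formulas with weights $\theta$ and $1-\theta$. Each side contributes a coefficient of the form $c\,\Gamma(\alpha+n+1)/n!$ multiplied respectively by $1/\sin(\pi\sigma)$ and $1/\sin(\pi\sigma^{\ast})$ that come out of the reflection identity, together with a factor $\sin(\pi\alpha)$ from the usual $\Gamma(-\sigma)\Gamma(\sigma+1)$-type manipulation. The defining equation \eqref{eq:fracal-sigma} for $\theta$ is precisely the condition under which these two contributions combine to the single scalar $\lambda_{\theta,n}^{\alpha}$ stated in the lemma, times the common polynomial factor $P_n^{\sigma^{\ast},\sigma}(x)$. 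This closes the first identity.

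For the second identity I would set $n=0$ and shift the parameter weight by one; the point is that ${}_{-1}D_x^{\alpha}(1+x)^{\sigma^{\ast}-1}$ and ${}_xD_1^{\alpha}(1-x)^{\sigma-1}$ each reduces, by the elementary formula ${}_{-1}D_x^{\alpha}(1+x)^{\mu}=\Gamma(\mu+1)/\Gamma(\mu+1-\alpha)\,(1+x)^{\mu-\alpha}$ and its mirror, to a constant multiple of $(1+x)^{\sigma^{\ast}-1-\alpha}=(1+x)^{-\sigma-1}$ (respectively $(1-x)^{-\sigma^{\ast}-1}$), and using $\Gamma(\sigma^{\ast}-\alpha)=\Gamma(-\sigma)$ one sees that the cross-over makes both one-sided pieces, as well as the $\theta$-combination, vanish.

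The hard part will be the bookkeeping: matching the Jacobi indices after applying the formula (this is where $\sigma+\sigma^{\ast}=\alpha$ is used in an essential way) and then converting the ratios of gamma functions into the compact trigonometric form $\sin(\pi\alpha)/[\sin(\pi\sigma)+\sin(\pi\sigma^{\ast})]$. Once these pieces fit together, the statement follows without any further analytic input.
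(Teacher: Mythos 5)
The paper does not prove this lemma --- it is quoted verbatim from \cite{ErvinHR16,MaoKar17} --- so the comparison must be with the arguments in those references, and against them your proposal has a genuine gap at its very first step. The building-block identity you start from is false: the Bateman-type formula
${}_{-1}D_x^{\alpha}\bigl[(1+x)^{\beta}P_n^{\gamma,\beta}(x)\bigr]=\frac{\Gamma(n+\beta+1)}{\Gamma(n+\beta+1-\alpha)}(1+x)^{\beta-\alpha}P_n^{\gamma+\alpha,\beta-\alpha}(x)$
holds only when the weight attached to the Jacobi polynomial is the single factor anchored at the \emph{same} endpoint as the fractional operator. Applied to the two-sided weight $\omega^{\sigma,\sigma^{\ast}}P_n^{\sigma,\sigma^{\ast}}$, the left-sided derivative does \emph{not} return a single weighted Jacobi polynomial; it returns a ${}_2F_1$ whose expansion splits into a polynomial part plus a genuinely non-polynomial, endpoint-singular part. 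You can see this already for $n=0$: expanding $(1-x)^{\sigma}=2^{\sigma}\sum_k\binom{\sigma}{k}(-(1+x)/2)^k$ and applying the power rule termwise gives
${}_{-1}D_x^{\alpha}\bigl[(1-x)^{\sigma}(1+x)^{\sigma^{\ast}}\bigr]=2^{\sigma}\frac{\Gamma(\sigma^{\ast}+1)}{\Gamma(1-\sigma)}(1+x)^{-\sigma}+\cdots$, which blows up like $(1+x)^{-\sigma}$ at $x=-1$, whereas your claimed right-hand side is $O\bigl((1+x)^{\sigma^{\ast}}\bigr)$ there. The index bookkeeping is also inconsistent ($\sigma^{\ast}+\alpha\neq\sigma^{\ast}$, and $P_n^{\sigma-\alpha,\,\sigma^{\ast}+\alpha}$ is not a multiple of $P_n^{\sigma^{\ast},\sigma}$). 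A structural red flag: if each one-sided derivative separately mapped $\omega^{\sigma,\sigma^{\ast}}P_n^{\sigma,\sigma^{\ast}}$ to a multiple of $P_n^{\sigma^{\ast},\sigma}$, then \emph{every} convex combination would be diagonalized by the same $\sigma$, and the defining equation \eqref{eq:fracal-sigma} would be superfluous --- contradicting the fact that $\sigma$ must vary with $\theta$.

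The idea your proposal is missing is precisely the role of \eqref{eq:fracal-sigma}. In \cite{ErvinHR16,MaoKar17} one computes $\theta\,{}_aI_x^{2-\alpha}+(1-\theta)\,{}_xI_b^{2-\alpha}$ applied to $\omega^{\sigma,\sigma^{\ast}}P_n^{\sigma,\sigma^{\ast}}$ via the Euler/beta integral representation, decomposes each resulting hypergeometric function into a polynomial of degree $n+2$ plus a non-polynomial remainder whose coefficient is a ratio of Gamma functions, converts those coefficients to $\sin(\pi\sigma)$ and $\sin(\pi\sigma^{\ast})$ by the reflection formula, and observes that \eqref{eq:fracal-sigma} is exactly the condition under which the two non-polynomial remainders cancel. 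Only then does $\frac{d^2}{dx^2}$ produce a polynomial of degree $n$, which is identified with $P_n^{\sigma^{\ast},\sigma}$ by an orthogonality argument and whose leading coefficient yields $\lambda_{\theta,n}^{\alpha}$; the kernel identity $\mathcal{L}_{\theta}^{\alpha}[\omega^{\sigma-1,\sigma^{\ast}-1}]=0$ follows from the same cancellation with the surviving polynomial of degree $\le 1$ annihilated by the second derivative. Your outline has the right ingredients (Bateman formulas, reflection formula, cancellation), but it assembles them in a way that bypasses the one step that actually requires \eqref{eq:fracal-sigma}, so as written it does not constitute a proof.
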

 
 \begin{remark}
 	To ensure that \eqref{eq:fracal-sigma} is uniquely solvable,
 	we restrict   $\sigma$ and $\sigma^{\ast}$  into  the interval  $(0,1].$
 	In particular,    $\sigma=1$ and $\sigma^{\ast}=\alpha-1$  for $\theta=1;$
 	$\sigma=\alpha-1$ and $\sigma^{\ast}=1$ for $\theta=0,$   and  $\lambda_{0,n}^{\alpha}=\lambda_{1,n}^{\alpha}= \frac{\Gamma(\alpha+n+1)}{n!}.$ 
 \end{remark}

 Throughout  the paper, $C$ or $c$ denote  generic constants
 and are independent of the truncation parameter $N$.   
  \section{Regularity }
 
 We present the regularity of the two-sided FDE
 \eqref{eq:two-sided}.
 \begin{theorem}[Regularity in weighted Sobolev spaces, I]\label{thm:fode-regularity-flap-nonsymm}  
 	Assume that $f\in B^{r}_{\omega^{\sigma^{\ast},\sigma}}$ and  $u\in L^\infty$.
 	If $\mu=0$,   then
 	$\omega^{-\sigma,-\sigma^{\ast}}u \in   B^{\alpha+r}_{ \omega^{\sigma,\sigma^{\ast}} }.$
 	If $\mu\neq0$, then
 	$\omega^{-\sigma,-\sigma^{\ast}}u \in   B^{ \alpha\wedge r+\alpha }_{ \omega^{\sigma,\sigma^{\ast}} }$. Here $\alpha\wedge r=\min\{\alpha, r\}.$
 \end{theorem}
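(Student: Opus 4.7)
The plan is to combine a Fourier-type analysis in weighted Jacobi polynomials with a short bootstrap on the reaction term, so that the two assertions split cleanly along $\mu=0$ versus $\mu\neq 0$.

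For the case $\mu=0$, I would write $u=\omega^{\sigma,\sigma^{\ast}}\tilde u$ and expand $\tilde u=\sum_{n}\tilde u_{n}P_{n}^{\sigma,\sigma^{\ast}}$ in the Jacobi basis orthogonal with respect to $\omega^{\sigma,\sigma^{\ast}}$, together with $f=\sum_{n}f_{n}P_{n}^{\sigma^{\ast},\sigma}$ in the conjugate basis. The pseudo-eigenfunction identity of Lemma \ref{lem:frac-non-symm-jacobi} turns \eqref{eq:two-sided} into the diagonal system $\lambda_{\theta,n}^{\alpha}\tilde u_{n}=f_{n}$. Since $\lambda_{\theta,n}^{\alpha}\sim Cn^{\alpha}$ by Stirling's formula, the Jacobi coefficients of $\tilde u$ decay exactly $\alpha$ orders faster than those of $f$. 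Combined with the standard Jacobi-coefficient characterization of the non-uniformly weighted Sobolev norm (integer orders via the derivative recursion for $P_{n}^{\gamma,\beta}$, non-integer orders via the interpolation definition recalled in Section 2), this yields $\tilde u\in B^{\alpha+r}_{\omega^{\sigma,\sigma^{\ast}}}$ with a quantitative norm bound by $\norm{f}_{B^{r}_{\omega^{\sigma^{\ast},\sigma}}}$.

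For the case $\mu\neq 0$, I would rewrite the equation as $\mathcal{L}_{\theta}^{\alpha}u=f-\mu u$ and bootstrap. Since $u\in L^{\infty}$ on a bounded interval, $\mu u\in L^{2}_{\omega^{\sigma^{\ast},\sigma}}=B^{0}_{\omega^{\sigma^{\ast},\sigma}}$, so the $\mu=0$ argument applied with $r$ replaced by $0$ gives $\tilde u\in B^{\alpha}_{\omega^{\sigma,\sigma^{\ast}}}$. A Leibniz computation on $u=\omega^{\sigma,\sigma^{\ast}}\tilde u$ then transfers this regularity to $u$: the negative powers produced by differentiating $\omega^{\sigma,\sigma^{\ast}}$ are exactly absorbed by the extra $(1-x)^{\sigma^{\ast}+k}(1+x)^{\sigma+k}$ factors in the $B^{k}_{\omega^{\sigma^{\ast},\sigma}}$-weight, yielding $u\in B^{\alpha\wedge r}_{\omega^{\sigma^{\ast},\sigma}}$ and hence $f-\mu u$ in the same space. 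A second pass of the $\mu=0$ result with $r$ replaced by $\alpha\wedge r$ now delivers $\tilde u\in B^{\alpha+\alpha\wedge r}_{\omega^{\sigma,\sigma^{\ast}}}$, as claimed.

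The main technical obstacle is the multiplication step, i.e., controlling $\norm{u}_{B^{s}_{\omega^{\sigma^{\ast},\sigma}}}$ by $\norm{\tilde u}_{B^{s}_{\omega^{\sigma,\sigma^{\ast}}}}$. The weights on the two sides are conjugate, and the Leibniz expansion produces terms of the form $\partial^{j}\omega^{\sigma,\sigma^{\ast}}$ whose endpoint singularities have to be balanced exactly by the shifted weights in the target space; this balance holds only up to order $\alpha$, which is the structural reason behind both the $\alpha\wedge r$ cap on the bootstrap gain and the saturation at $2\alpha$ when $f$ is very smooth. A secondary nuisance is that $\alpha+r$ is typically non-integer, so the Jacobi-coefficient characterization must be justified either by real interpolation between integer orders or by pushing the Jacobi derivative recursion to fractional orders; I would use the interpolation route, consistent with the definition of $B^{m}_{\omega^{\gamma,\beta}}$ adopted in Section 2.
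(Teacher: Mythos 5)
Your proposal follows essentially the same route as the paper: for $\mu=0$ it diagonalizes the equation via the pseudo-eigenfunctions of Lemma \ref{lem:frac-non-symm-jacobi}, compares Jacobi coefficients of $\tilde u$ and $f$ using $\lambda_{\theta,n}^{\alpha}\sim n^{\alpha}$ together with the derivative recursion and interpolation for non-integer orders, and for $\mu\neq 0$ it runs the same two-pass bootstrap through $\tilde f=f-\mu u$. Your treatment of the weight-transfer step from $\tilde u\in B^{s}_{\omega^{\sigma,\sigma^{\ast}}}$ to $u\in B^{s}_{\omega^{\sigma^{\ast},\sigma}}$ is in fact more explicit than the paper's ``direct calculation'' and correctly identifies why the gain saturates at order $\alpha$.
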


 \begin{proof}
 	For $\mu =0$, we write  $ u =  \omega^{\sigma,\sigma^{\ast}}\sum_{n=0}^\infty   u_n      P_n^{\sigma,\sigma^{\ast}} (x)$. Then with Lemma \ref{lem:frac-non-symm-jacobi} we have
 	$u_n=(\lambda^{\alpha}_{\theta,n})^{-1} f_n $ from the equation $\mathcal{L}_{\theta}^{\alpha}u =f$ and $f = \sum_{n=0}^\infty   f_n      P_n^{\sigma^{\ast},\sigma} (x)$. Then  
 	\[u=   \omega^{\sigma,\sigma^{\ast}}\sum_{n=0}^\infty   (\lambda_{\theta,n}^{\alpha})^{-1}f_n      P_n^{\sigma,\sigma^{\ast}} (x).\]
 	By   \eqref{eq:jacobi-derivatives} and \eqref{estimate-gamma},  we have
 	\begin{eqnarray}\label{eq:weighted-norm-u}
 	&& \norm{ \partial_x^k  [\omega^{-\sigma,-\sigma^{\ast}}u ]}_{\omega^{\sigma+k,\sigma^{\ast}+k} }^2
 	=\norm{ \partial_x^k \sum_{n=1}^\infty (\lambda^{\alpha}_{\theta,n})^{-1}f_n P_n^{\sigma,\sigma^{\ast} }  }_{\omega^{\sigma+k,\sigma^{\ast}+k}  }^2 \notag \\
 	&&= \sum_{n=k}^\infty f_n^2(\lambda^{\alpha}_{\theta,n})^{-2}\big(d_{n,k}^{\sigma,\sigma^{\ast}}\big)^2  h_{n-k}^{\sigma+k,\sigma^{\ast}+k} \approx\sum_{n=k}^\infty f_n^2 n^{-2\alpha} \big(d_{n,k}^{\sigma,\sigma^{\ast}}\big)^2   h_{n-k}^{\sigma+k,\sigma^{\ast}+k},
 	\end{eqnarray}
 	where $d_{n,k}^{\gamma,\beta }=\frac{\Gamma(n+k+\gamma+\beta +1)}{2^k \Gamma(n+\gamma+\beta+1)}$.  Recall that we have
 	\begin{eqnarray}\label{eq:weighted-norm-f}
 	\norm{ \partial_x^k f}_{\omega^{\sigma^{\ast}+k,\sigma+k} }^2
 	&=& \norm{ \partial_x^k \sum_{n=1}^\infty f_n P_n^{\sigma^{\ast},\sigma }  }_{\omega^{\sigma^{\ast}+k,\sigma+k}  }^2=\sum_{n=k}^\infty f_n^2  \big(d_{n,k}^{\sigma^{\ast},\sigma}\big)^2  h_{n-k}^{\sigma^{\ast}+k,\sigma+k},\,\,\,
 	\end{eqnarray}
 		where $d_{n,k}^{\sigma^*,\sigma}=d_{n,k}^{\sigma,\sigma^*}$ and $h_{n-k}^{\sigma^*+k,\sigma+k}=h_{n-k}^{\sigma+k,\sigma^*+k}$  by \eqref{eq:orthogonality-of-jacobi-poly-normalized-constant}. 
 	By \eqref{eq:weighted-norm-u} and \eqref{eq:weighted-norm-f}, we  reach the conclusion from the
 	space interpolation theory.

 	Now consider $\mu\neq 0$. We use a bootstrapping technique to obtain higher regularity.  First,  	we can obtain that $u= \omega^{\sigma,\sigma^{\ast}}\tilde{u}$ and  $\tilde{u}\in B_{\omega^{\sigma,\sigma^{\ast}}}^{\alpha}$. In fact, we have from   $u\in L^\infty $ that   $\tilde{f}= f- \mu u \in  B_{\omega^{\sigma^{\ast},\sigma}}^{r\wedge 0} .$ 
 	From  the equation  $\mathcal{L}_{\theta}^{\alpha}  u = \tilde{f}$  and using  the conclusion when $\mu=0$, we have
 	$\omega^{-\sigma,-\sigma^{\ast}}u\in B_{\omega^{\sigma,\sigma^{\ast}}}^{r\wedge 0+\alpha }$.
 	Second, by a direct calculation using the definition of weighted space $B_{\omega^{\sigma^{\ast},\sigma}}^{r\wedge0+\alpha}$,  we have $u\in B_{\omega^{\sigma^{\ast},\sigma}}^{r\wedge0+\alpha}$.
 	Finally, by the fact that $\tilde{f}= f- \mu u  \in  B^{\alpha\wedge r }_{\omega^{\sigma^{\ast},\sigma}} $ and the conclusion for $\mu=0$,
 	we have
 	$\omega^{-\sigma,-\sigma^{\ast}} u\in B_{\omega^{\sigma,{\sigma^{\ast}}}}^{2\alpha \wedge (\alpha+r)}$.  	  
 \end{proof}

 \begin{theorem}[Regularity in weighted Sobolev spaces,   II]\label{thm:fode-regularity-two-side}  Assume that  
 	$f\in B^{r}_{\omega^{\sigma^{\ast}-1,\sigma-1}}$, with $r\geq 0$. If $\mu=0$,   then
 	$\omega^{-\sigma,-\sigma^{\ast}}u \in   B^{\alpha+r}_{\omega^{ \sigma-1,\sigma^{\ast}-1 }}.$ 
 	If $\mu\neq0$,
 	$\omega^{-\sigma,-\sigma^{\ast}}u \in   B^{ (\alpha+1)\wedge r+ \alpha }_{\omega^{ \sigma,\sigma^{\ast} }}$ and $\mathcal{L}_{\theta}^{\alpha}u\in B^{r  \wedge(\alpha+1)}_{\omega^{\sigma^{\ast},\sigma}}.$ 
 \end{theorem}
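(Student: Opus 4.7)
The plan is to parallel the proof of Theorem \ref{thm:fode-regularity-flap-nonsymm} while working in the more singular weighted spaces indexed by $\omega^{\sigma-1,\sigma^{\ast}-1}$ rather than $\omega^{\sigma,\sigma^{\ast}}$. For the case $\mu=0$, I would start with the expansion $u=\omega^{\sigma,\sigma^{\ast}}\sum_n u_n P_n^{\sigma,\sigma^{\ast}}$ and use Lemma \ref{lem:frac-non-symm-jacobi} to identify $u_n=(\lambda^{\alpha}_{\theta,n})^{-1}f_n$, where $f_n$ are the Jacobi coefficients of $f$ in the basis $\{P_n^{\sigma^{\ast},\sigma}\}$. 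As in Theorem \ref{thm:fode-regularity-flap-nonsymm}, differentiation gives $\partial_x^k[\omega^{-\sigma,-\sigma^{\ast}}u]=\sum_{n\ge k}u_n d_{n,k}^{\sigma,\sigma^{\ast}}P_{n-k}^{\sigma+k,\sigma^{\ast}+k}$. The new ingredient is evaluating this in the norm $\|\cdot\|_{\omega^{\sigma-1+k,\sigma^{\ast}-1+k}}$, whose weight no longer matches the natural orthogonality weight. I would handle this via a Christoffel-type connection identity of the form
\[
P_{n-k}^{\sigma+k,\sigma^{\ast}+k}=\sum_{j=0}^{2}c_{n,j}\,P_{n-k+j}^{\sigma+k-1,\sigma^{\ast}+k-1},
\]
or equivalently by inserting factors of $(1-x)(1+x)$ and integrating, in order to express the shifted-weight norm as a weighted sum of $f_n^2\,n^{-2\alpha}$ with explicit $\Gamma$-ratio coefficients. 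Performing the same computation on $\|\partial_x^k f\|^2_{\omega^{\sigma^{\ast}-1+k,\sigma-1+k}}$ and matching asymptotics through the $\Gamma$-ratio estimates from the appendix would yield the coefficient-wise inequality which, via interpolation, delivers $\omega^{-\sigma,-\sigma^{\ast}}u\in B^{\alpha+r}_{\omega^{\sigma-1,\sigma^{\ast}-1}}$.

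For $\mu\neq 0$, I would run the same bootstrapping argument as in Theorem \ref{thm:fode-regularity-flap-nonsymm}. Using the embedding $B^r_{\omega^{\sigma^{\ast}-1,\sigma-1}}\hookrightarrow B^r_{\omega^{\sigma^{\ast},\sigma}}$ (valid since $\sigma,\sigma^{\ast}\in(0,1]$), I first apply Theorem \ref{thm:fode-regularity-flap-nonsymm} to $\tilde f=f-\mu u$ to obtain a baseline $\omega^{-\sigma,-\sigma^{\ast}}u\in B^{r\wedge 0+\alpha}_{\omega^{\sigma,\sigma^{\ast}}}$. Feeding this back into $\tilde f$ and invoking the $\mu=0$ case of the present theorem gains an additional $\alpha$ in regularity. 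Iterating this bootstrap until the regularity of $f$ is exhausted produces $\omega^{-\sigma,-\sigma^{\ast}}u\in B^{(\alpha+1)\wedge r+\alpha}_{\omega^{\sigma,\sigma^{\ast}}}$. The final assertion $\mathcal{L}^{\alpha}_{\theta}u\in B^{r\wedge(\alpha+1)}_{\omega^{\sigma^{\ast},\sigma}}$ then follows immediately from writing $\mathcal{L}^{\alpha}_{\theta}u=f-\mu u$ and combining the regularity of $u$ just obtained with the assumption on $f$.

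The hard part will be the first step: converting the expansion of $\partial_x^k[\omega^{-\sigma,-\sigma^{\ast}}u]$ in the basis $\{P_{n-k}^{\sigma+k,\sigma^{\ast}+k}\}$ into a sharp estimate in the non-matching weight $\omega^{\sigma-1+k,\sigma^{\ast}-1+k}$. The difficulty is technical but concentrated: it requires a careful connection formula between Jacobi polynomials of neighboring parameters, together with sharp $\Gamma$-ratio asymptotics, to verify that the cross-terms produced by the connection do not spoil the expected summability. Once that coefficient-level identity is secured, interpolation and the bootstrap argument proceed exactly as in Theorem \ref{thm:fode-regularity-flap-nonsymm}.
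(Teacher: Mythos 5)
Your bootstrap for the case $\mu\neq 0$ matches the paper's argument in all essentials (alternate between the $\mu=0$ conclusion in the more singular weight to gain the extra ``$+1$'' and Theorem \ref{thm:fode-regularity-flap-nonsymm} at the last step), and your final observation that $\mathcal{L}_{\theta}^{\alpha}u=f-\mu u$ yields the last assertion is exactly what the paper does. The problem is the $\mu=0$ case, where the identity you lean on does not exist. You propose to expand $\partial_x^k[\omega^{-\sigma,-\sigma^{\ast}}u]$ in the basis $\{P_{n-k}^{\sigma+k,\sigma^{\ast}+k}\}$ and then pass to the mismatched weight $\omega^{\sigma-1+k,\sigma^{\ast}-1+k}$ via a \emph{three-term} connection of the form $P_{m}^{\gamma+1,\beta+1}=\sum_{j=0}^{2}c_{j}P_{m+j}^{\gamma,\beta}$. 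The finite banded connection (Corollary \ref{cor:different-index}) only goes the other way: $P_m^{\gamma,\beta}$ is a combination of $P_{m-2}^{\gamma+1,\beta+1},P_{m-1}^{\gamma+1,\beta+1},P_m^{\gamma+1,\beta+1}$. Inverting it, $P_m^{\gamma+1,\beta+1}=\sum_{l=0}^{m}d_{m,l}P_l^{\gamma,\beta}$ with generically \emph{all} $d_{m,l}\neq 0$ --- this is precisely what Lemma \ref{lem:integral-estimate} is about (it proves the entries $X_k^n$ of this full lower-triangular inverse are merely uniformly bounded, not vanishing). Your fallback of ``inserting factors of $(1-x)(1+x)$ and integrating'' runs into the same wall: the Gram matrix of $\{P_{n-k}^{\sigma+k,\sigma^{\ast}+k}\}$ with respect to $\omega^{\sigma-1+k,\sigma^{\ast}-1+k}$ is full, so the squared norm is a genuinely non-diagonal quadratic form in the coefficients and cannot be matched termwise against $f_n^2 n^{-2\alpha}$. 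A second, related gap: your hypothesis controls the coefficients of $f$ in the $P_n^{\sigma^{\ast}-1,\sigma-1}$ basis, not the coefficients $f_n$ in the $P_n^{\sigma^{\ast},\sigma}$ basis that enter the diagonal relation $u_n=(\lambda_{\theta,n}^{\alpha})^{-1}f_n$; converting between the two requires the same full triangular connection.

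The paper's device for getting around this is worth noting, because it is the actual content of the proof. It expands $\omega^{-\sigma,-\sigma^{\ast}}u$ in $\{P_n^{\sigma-1,\sigma^{\ast}-1}\}$ and $f$ in $\{P_n^{\sigma^{\ast}-1,\sigma-1}\}$ from the outset (so both target norms are diagonal), uses the three-term connection in its \emph{valid} direction to transport the pseudo-eigenrelation of Lemma \ref{lem:frac-non-symm-jacobi} into a three-term recurrence
$u_{n+2}A_{n+2}+u_{n+1}B_{n+1}+u_nC_n=\lambda_n^{-1}(f_{n+2}A_{n+2}-f_{n+1}B_{n+1}+f_nC_n)$,
solves it explicitly as $u_n=f_n/\lambda_n$ plus tail sums, and only then confronts the full triangular inverse --- in the controlled form of the uniformly bounded quantities $X_k^n$ of Lemma \ref{lem:integral-estimate}, combined with Cauchy--Schwarz and the decay $|A_k|\leq C$, $|B_k|\leq C/k$ to show the correction is $O(n^{-r-\alpha})$. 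Without this recurrence (or some equivalent way of diagonalizing the target norm before applying the operator), your ``concentrated technical difficulty'' is not a computation waiting to be carried out; the identity it rests on is false, and the $\mu=0$ case --- and hence everything built on it --- is unproved.
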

 \begin{proof}
 	Consider first $\mu=0$. 
 	From Corollary \ref{cor:different-index}, we have the following relations: for $n\geq 0$
 	\begin{eqnarray}
 	&&P_n^{\sigma-1,\sigma^{\ast}-1}=A_{n}^{\sigma-1,\sigma^{\ast}-1}P_{n-2}^{\sigma,\sigma^{\ast}}+B_{n}^{\sigma-1,\sigma^{\ast}-1}P_{n-1}^{\sigma,\sigma^{\ast}}+C_{n}^{\sigma-1,\sigma^{\ast}-1}P_{n}^{\sigma,\sigma^{\ast}},\label{eq:spectral-different-index}\\
 	&&P_n^{\sigma^{\ast}-1,\sigma-1}=A_{n}^{\sigma-1,\sigma^{\ast}-1}P_{n-2}^{\sigma^{\ast},\sigma}-B_{n}^{\sigma-1,\sigma^{\ast}-1}P_{n-1}^{\sigma^{\ast},\sigma}+C_{n}^{\sigma-1,\sigma^{\ast}-1}P_{n}^{\sigma^{\ast},\sigma},\label{eq:spectral-different-index-1}
 	\end{eqnarray}
 	where $A_{n}^{\sigma-1,\sigma^{\ast}-1},$
    $B_{n}^{\sigma-1,\sigma^{\ast}-1}$ and
    $C_{n}^{\sigma-1,\sigma^{\ast}-1}$ are defined in
    Corollary \ref{cor:different-index} and
    $P_{-2}^{\gamma,\beta}\equiv
    P_{-1}^{\gamma,\beta}\equiv0$.  Throughout the proof,  to
    simplify the notations but  without incurring confusion,
    we drop the superscript $\sigma-1,\sigma^{\ast}-1$ for
    $A_n$, $B_n$ and $C_n$ and abbreviate $\lambda_{\theta,n}^{\alpha}$ as $\lambda_n$. 
 From \eqref{eq:spectral-different-index}, we have  
 	\begin{eqnarray}\label{eq:spectral-a1}
 	\sum_{n=0}^{\infty}u_nP_n^{\sigma-1,\sigma^{\ast}-1}&=&\sum_{n=0}^{\infty}u_n(A_nP_{n-2}^{\sigma,\sigma^{\ast}}+B_nP_{n-1}^{\sigma,\sigma^{\ast}}+C_nP_{n}^{\sigma,\sigma^{\ast}})\nonumber \\
 	&=& \sum_{n=0}^{\infty}(u_{n+2}A_{n+2}+u_{n+1}B_{n+1}+u_nC_n)P_n^{\sigma^{\ast},\sigma}.
 	\end{eqnarray}
 	It follows from Lemma \ref{lem:frac-non-symm-jacobi}  that 
 	\begin{eqnarray}
 	\mathcal{L}_{\theta}^{\alpha}u
 	&=&\mathcal{L}_{\theta}^{\alpha}(\omega^{\sigma,\sigma^{\ast}}\sum_{n=0}^{\infty}u_nP_n^{\sigma-1,\sigma^{\ast}-1}) \nonumber\\
 	&=&\sum_{n=0}^{\infty}u_n(A_n\lambda_{n-2}P_{n-2}^{\sigma^{\ast},\sigma}+B_n\lambda_{n-1}P_{n-1}^{\sigma^{\ast},\sigma}+C_n\lambda_{n}P_{n}^{\sigma^{\ast},\sigma})\nonumber\\
 	&=& \sum_{n=0}^{\infty}\lambda_n(u_{n+2}A_{n+2}+u_{n+1}B_{n+1}+u_nC_n)P_n^{\sigma^{\ast},\sigma}.\label{spectral-eq-1}
 	\end{eqnarray}
 	From \eqref{eq:spectral-different-index-1}, we have 
 	\begin{eqnarray}
 	\sum_{n=0}^{\infty}f_nP_n^{\sigma^{\ast},\sigma}&=&\sum_{n=0}^{\infty}f_n(A_nP_{n-2}^{\sigma^{\ast},\sigma}-B_nP_{n-1}^{\sigma^{\ast},\sigma}+C_nP_{n}^{\sigma^{\ast},\sigma}) \nonumber \\
 	&=&\sum_{n=0}^{\infty}(f_{n+2}A_{n+2}-f_{n+1}B_{n+1}+f_nC_n)P_{n}^{\sigma^{\ast},\sigma}.
 	\label{spectral-eq-2}
 	\end{eqnarray}
 	Substituting \eqref{spectral-eq-1} and \eqref{spectral-eq-2} into \eqref{eq:two-sided} leads to 
 	\begin{eqnarray}\label{spectral-eq}
 &&	\sum_{n=0}^{\infty}\lambda_n(u_{n+2}A_{n+2}+u_{n+1}B_{n+1}+u_nC_n)P_n^{\sigma^{\ast},\sigma} \nonumber\\
 	&=&\sum_{n=0}^{\infty}(f_{n+2}A_{n+2}-f_{n+1}B_{n+1}+f_nC_n)P_{n}^{\sigma^{\ast},\sigma}.
 	\end{eqnarray}
 	Multiplying by $P_n^{\sigma^{\ast},\sigma}$ over both sides of the last equation \eqref{spectral-eq} and by the orthogonality of the Jacobi polynomials, we arrive at 
 	\begin{eqnarray}
 	&&u_{n+2}A_{n+2}+u_{n+1}B_{n+1}+u_nC_n\nonumber\\
 	&=&\frac{1}{\lambda_n}(f_{n+2}A_{n+2}-f_{n+1}B_{n+1}+f_nC_n)\nonumber  \\
 	&=&F_{n+2}A_{n+2}+F_{n+1}B_{n+1}+F_nC_n +(\frac{1}{\lambda_n}-\frac{1}{\lambda_{n+2}})f_{n+2}A_{n+2}\nonumber \\
 	&& -(\frac{1}{\lambda_n}+\frac{1}{\lambda_{n+1}})f_{n+1}B_{n+1} ,
 	\end{eqnarray}
 	where $F_n=f_n/\lambda_n.$
 	Using \eqref{eq:spectral-a1} again, we obtain 
 	\begin{eqnarray}
 	\sum_{n=0}^{\infty}u_nP_n^{\sigma-1,\sigma^{\ast}-1}&=&\sum_{n=0}^{\infty}F_nP_n^{\sigma-1,\sigma^{\ast}-1}+\sum_{n=0}^{\infty}(\frac{1}{\lambda_n}-\frac{1}{\lambda_{n+2}})f_{n+2}A_{n+2}P_n^{\sigma,\sigma^{\ast}}\nonumber\\
 	&&-\sum_{n=0}^{\infty}(\frac{1}{\lambda_n}+\frac{1}{\lambda_{n+1}})f_{n+1}B_{n+1}P_n^{\sigma,\sigma^{\ast}}.
 	\end{eqnarray}
 	Thus we have
 	\begin{eqnarray*}
 	u_n&=&F_n+\sum_{k=n}^{\infty}(\frac{1}{\lambda_k}-\frac{1}{\lambda_{k+2}})f_{k+2}A_{k+2}(h_n^{\sigma-1,\sigma^{\ast}-1})^{-1}(P_k^{\sigma,\sigma^{\ast}},P_n^{\sigma-1,\sigma^{\ast}-1})_{\omega^{\sigma-1,\sigma^{\ast}-1}}\nonumber\\
 	&&-\sum_{k=n}^{\infty}(\frac{1}{\lambda_k}+\frac{1}{\lambda_{k+1}})f_{k+1}B_{k+1} (h_n^{\sigma-1,\sigma^{\ast}-1})^{-1}(P_k^{\sigma,\sigma^{\ast}},P_n^{\sigma-1,\sigma^{\ast}-1})_{\omega^{\sigma-1,\sigma^{\ast}-1}},
 	\end{eqnarray*}
 	where $h_n^{\sigma-1,\sigma^{\ast}-1}$ is defined in
    \eqref{eq:orthogonality-of-jacobi-poly-normalized-constant}.
    Notice that $|A_k|\leq C$ and $|B_k|\leq C/k$ in Corollary
    \ref{cor:different-index}. 
 	By Lemma \ref{lem:integral-estimate},  we have 
 	\begin{eqnarray}
 	|u_n|\leq \frac{|f_n|}{\lambda_n}+C\sum_{k=n}^{\infty}(\frac{1}{\lambda_k}-\frac{1}{\lambda_{k+2}})|f_{k+2}|+C\sum_{k=n}^{\infty}(\frac{1}{\lambda_k}+\frac{1}{\lambda_{k+1}})\frac{1}{k}|f_{k+1}|.
 	\end{eqnarray}
 	When $r$ is an integer,  using the Cauchy-Schwarz inequality, we get 
 	\begin{eqnarray}
 	&&\sum_{k=n}^{\infty}(\frac{1}{\lambda_k}-\frac{1}{\lambda_{k+2}})|f_{k+2}|\nonumber \\
 	& \leq& \biggl[\sum_{k=n}^{\infty}(|f_{k+2}|d_{k+2,r}^{\sigma^{\ast}-1,\sigma-1})^2h_{k-r}^{\sigma^{\ast}-1+r,\sigma-1+r} \biggr]^{1/2} \nonumber \\
 	 && \times\biggl[\sum_{k=n}^{\infty}(\frac{1}{\lambda_k}-\frac{1}{\lambda_{k+2}})^2(d_{k+2,r}^{\sigma^{\ast}-1,\sigma-1})^{-2}(h_{k-r}^{\sigma^{\ast}-1+r,\sigma-1+r})^{-1}\biggr]^{1/2}\nonumber\\
 	&\leq& |f|_{B^r_{\omega^{\sigma^{\ast}-1,\sigma-1}}}\biggl[\sum_{k=n}^{\infty}(\frac{1}{\lambda_k}-\frac{1}{\lambda_{k+2}})^2(d_{k+2,r}^{\sigma^{\ast}-1,\sigma-1})^{-2}(h_{k-r}^{\sigma^{\ast}-1+r,\sigma-1+r})^{-1}\biggr]^{1/2}\nonumber\\
 	&\leq&C|f|_{B^r_{\omega^{\sigma^{\ast}-1,\sigma-1}}} n^{-r-\alpha}.
 	\end{eqnarray}
 	Similarly, we obtain  
 	\begin{equation}
 	\sum_{k=n}^{\infty}(\frac{1}{\lambda_k}+\frac{1}{\lambda_{k+1}})\frac{1}{k}|f_{k+1}|\leq C|f|_{B^r_{\omega^{\sigma^{\ast}-1,\sigma-1}}} n^{-r-\alpha}.
 	\end{equation}
 	Therefor, we have 
 	\begin{equation}
 	|u_n|\leq (\lambda_{\theta,n}^{\alpha})^{-1}|f_n|+Cn^{-r-\alpha}.
 	\end{equation}
 	Similar to the proof of Theorem \ref{thm:fode-regularity-flap-nonsymm} , we have $\omega^{-\sigma,-\sigma^{\ast}}u\in B^{r+\alpha}_{\omega^{\sigma-1,\sigma^{\ast}-1}}.$  When $r$ is not an integer, we can apply standard space interpolation to obtain the conclusion. 
 	
 	When $\mu\neq 0,$ we apply the bootstrapping technique. By $\mathcal{L}_{\theta}^{\alpha}u=f-\mu u\in B^{r\wedge 0}_{\omega^{\sigma-1,\sigma^{\ast}-1}}$ and the conclusion above, we have $\omega^{-\sigma,-\sigma^{\ast}}u\in B^{r\wedge 0+\alpha}_{\omega^{\sigma-1,\sigma^{\ast}-1}},$ which leads to $u\in B^{\alpha}_{\omega^{\sigma^{\ast}-1,\sigma-1}} .$ Thus  $\mathcal{L}_{\theta}^{\alpha}u=f-\lambda u\in B^{r\wedge \alpha}_{\omega^{\sigma^{\ast}-1,\sigma-1}}$  and by the conclusion above, we have $\omega^{-\sigma,-\sigma^{\ast}}u\in B^{r\wedge \alpha+\alpha}_{\omega^{\sigma-1,\sigma^{\ast}-1}},$ which further leads to $ u\in B^{\alpha+1}_{\omega^{\sigma^{\ast},\sigma}}$. Therefore, we have $\mathcal{L}_{\theta}^{\alpha}u=f-\mu u\in B^{r  \wedge(\alpha+1)}_{\omega^{\sigma^{\ast},\sigma}}.$  Then by Theorem \ref{thm:fode-regularity-flap-nonsymm}, we have $\omega^{-\sigma,-\sigma^{\ast}}u\in B^{r\wedge (\alpha+1)+\alpha}_{\omega^{\sigma,\sigma^{\ast}}}.$  	  
 \end{proof}

 
 \section{Spectral Galerkin method  and error analysis }

 
 The weak formulation is to find $u\in H_0^{\alpha/2},$ such that 
 \begin{eqnarray}\label{spectral-bilinear}
 (\mathcal{L}_{\theta}^{\alpha}u,v)+\mu (u,v)=(f,v), \quad \forall v \in H_0^{\alpha/2},
 \end{eqnarray} 
 where $H_0^{\alpha/2}$ is the standard fractional Sobolev space in \cite{Adams75} with induced semi-norm $|\cdot|_{H^{\alpha/2}},$ and $ (H_0^{\alpha/2})^{\prime}$ is the dual space of $ H_0^{\alpha/2}.$
 From \cite{ErvinR06}, we know that there exits a unique solution $u\in H_0^{\alpha/2}$ such that $\|u\|_{H^{\alpha/2}} \leq \|f\|_{(H_0^{\alpha/2})^{\prime}}. $ 
 
 We first consider the implementation of the spectral Galerkin method. Define the finite dimensional space 
 \[ U_N :=\omega^{\sigma,\sigma^{\ast}}{\mathbb{P}_N}=\text{Span}\{\phi_0,\phi_1,\ldots, \phi_N \},\] where 
$ \phi_k(x):=(1-x)^{\sigma}(1+x)^{\sigma^{\ast}}P^{\sigma,\sigma^{\ast}}_k(x),$ for $0\leq k\leq N$ and  $\mathbb{P}_N$ is the set of all algebraic   polynomials of degree at most $N.$ 
The spectral Galerkin 
 method is to   find $u_N\in U_N$ such that 
 \begin{eqnarray}\label{weak-form-scheme_3}
 (\mathcal{L}_{\theta}^{\alpha}u_N,v_N)+\mu (u_N,v_N)=(f,v_N),\quad \forall v_N\in U_N.
 \end{eqnarray}
 Plugging $u_N=\sum_{n=0}^N \hat{u}_n \phi_n(x)$ into  \eqref{weak-form-scheme_3} and taking $v_N=\phi_k(x)$,  we obtain from the orthogonality of Jacobi polynomials and Lemma \ref{lem:frac-non-symm-jacobi} that
 \begin{equation}
 \sum_{n=0}^N  S_{k,n}  \hat{u}_n +\mu\sum_{n=0}^N  M_{k,n}  \hat{u}_n
 =   (f, \phi_k),\quad k=0,1,2,\cdots,N,
 \end{equation}
 where 
 \begin{eqnarray}
 &&S_{k,n} = \lambda_{\theta,k}^{\alpha}\int_{-1}^{1}
 (1-x)^{\sigma}(1+x)^{\sigma^{\ast}}P_n^{\sigma^{\ast},\,\sigma}(x) P_k^{\sigma,\,\sigma^{\ast}}(x) \,dx,    \\
 && M_{k,n} =\int_{-1}^{1}
 (1-x)^{2\sigma}(1+x)^{2\sigma^{\ast}}P_n^{\sigma,\,\sigma^{\ast}}(x)P_k^{\sigma,\,\sigma^{\ast}}(x) \,dx .
 \end{eqnarray}
 To find $S_{k,n}$ and $M_{k,n}$,   we  apply    Gauss-Jacobi quadrature rule, e.g.
 \begin{eqnarray*}
 M_{k,n}=\int_{-1}^{1}  (1-x)^{2\sigma}(1+x)^{2\sigma^{\ast}}P_n^{\sigma,\,\sigma^{\ast}}(x)P_k^{\sigma^{\ast},\,\sigma}(x) \,dx= \sum_{j=0}^{N}P_n^{\sigma,\,\sigma^{\ast}}(x_j)P_k^{\sigma^{\ast},\,\sigma}(x_j)w_j.
 \end{eqnarray*}
 Here    $x_j$'s are the zeros of Jacobi polynomial $P_{N+1}^{2\sigma,2\sigma^{\ast}}(x)$, $w_j$'s are the corresponding quadrature weights.   The quadrature rule here is exact since $n+k\leq 2N$ while the quadrature rule  is exact for all $(2N+1)$-th order polynomials. The integral in  $S_{n,k}$ can be calculated similarly. 
 To find $f_k = (f, \phi_k)$, we use a different Gauss-Jacobi quadrature rule:
 $ f_k \approx \tilde{f}_k=\sum\limits_{j=0}^\mathsf{N} f(x_j)P_k^{\sigma^{\ast},\,\sigma}(x_j)w_j. $
 Here  $x_j$'s are the roots of Jacobi polynomial $P_{\mathsf{N}+1}^{\sigma^{\ast},\,\sigma}(x)$, $w_j$'s are the corresponding quadrature weights. 
 
  Next we focus on the analysis of the spectral Galerkin method. 
 
 
 To show the stability of \eqref{weak-form-scheme_3}, we require the following lemma.
 \begin{lemma}\cite{ErvinR06}
 	For any $v\in H_0^{\alpha/2}$ with $1<\alpha\leq 2,$ we  have 
 	\begin{equation}\label{frac-norm-equi}
 	(\mathcal{L}_{\theta}^{\alpha}v,v)=c_1^{\alpha}|v|^2_{H^{\alpha/2}},\quad c_1^{\alpha}=-\cos(\alpha/2\pi).
 	\end{equation} 
 \end{lemma}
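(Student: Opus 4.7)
The plan is to split $\mathcal{L}_{\theta}^{\alpha}$ into its left and right Riemann-Liouville parts, transfer half the order onto the test function via a fractional integration-by-parts identity, and then recognize the resulting symmetric bilinear form as a scalar multiple of the $H^{\alpha/2}$ semi-norm by passing to Fourier variables.

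First I would establish the Ervin--Roop adjoint identity: for any $v\in H_0^{\alpha/2}(a,b)$ extended by zero outside $(a,b)$, the semigroup property ${}_aD_x^\alpha={}_aD_x^{\alpha/2}\circ{}_aD_x^{\alpha/2}$ together with the formal adjoint relation $({}_aD_x^{\alpha/2}f,g)_{L^2}=(f,{}_xD_b^{\alpha/2}g)_{L^2}$ (verified directly from the definitions \eqref{def-1}--\eqref{def-1R} and Fubini's theorem, using that $v$ vanishes at the endpoints to discard boundary terms) yields
\begin{equation*}
({}_aD_x^{\alpha}v,v)_{L^2}=({}_aD_x^{\alpha/2}v,{}_xD_b^{\alpha/2}v)_{L^2},\qquad({}_xD_b^{\alpha}v,v)_{L^2}=({}_xD_b^{\alpha/2}v,{}_aD_x^{\alpha/2}v)_{L^2}.
\end{equation*}
Both right-hand sides are the same symmetric real quantity, so the convex-combination coefficient $\theta$ will drop out at the end.

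Second I would pass to Fourier variables. The zero extension of $v$ lies in $H^{\alpha/2}(\mathbb{R})$, and on the line the left and right Riemann-Liouville half-derivatives become Fourier multipliers with symbols $(i\xi)^{\alpha/2}$ and $(-i\xi)^{\alpha/2}$, respectively. Writing $(\pm i\xi)^{\alpha/2}=|\xi|^{\alpha/2}e^{\pm i\operatorname{sgn}(\xi)\alpha\pi/4}$ on the principal branch and applying Parseval yields
\begin{equation*}
({}_aD_x^{\alpha/2}v,{}_xD_b^{\alpha/2}v)_{L^2}=\operatorname{Re}\int_{\mathbb{R}}(i\xi)^{\alpha/2}\overline{(-i\xi)^{\alpha/2}}\,|\hat v(\xi)|^2\,d\xi=\cos(\alpha\pi/2)\,|v|_{H^{\alpha/2}}^{2},
\end{equation*}
after verifying on each half-line $\xi>0$ and $\xi<0$ that the product of the complex symbols has real part $|\xi|^\alpha\cos(\alpha\pi/2)$.

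Combining these pieces with weights $\theta$ and $1-\theta$ as in $\mathcal{L}_{\theta}^{\alpha}=-[\theta\,{}_aD_x^{\alpha}+(1-\theta)\,{}_xD_b^{\alpha}]$, the factor $\theta+(1-\theta)=1$ collapses and we obtain $(\mathcal{L}_{\theta}^{\alpha}v,v)=-\cos(\alpha\pi/2)\,|v|_{H^{\alpha/2}}^{2}$, giving the claimed constant $c_1^{\alpha}=-\cos(\alpha\pi/2)$. The main technical obstacle is the Fourier-symbol calculation: one must correctly track the $\operatorname{sgn}(\xi)$-dependent branch on both halves of the real line and check that the imaginary parts cancel while the real parts combine into the cosine factor. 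The adjoint identity and the density of smooth compactly supported functions in $H_0^{\alpha/2}$ (which justifies the extension by zero and the use of Parseval) are standard and are precisely the ingredients from \cite{ErvinR06}.
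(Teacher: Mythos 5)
The paper offers no proof of this lemma — it is quoted verbatim from Ervin and Roop — and your argument is precisely the proof in that cited reference: transfer half the order onto the test function via the adjoint/semigroup identity, pass to the zero extension, and compute the Fourier symbols. The key computation checks out ($\operatorname{Re}\bigl[(i\xi)^{\alpha/2}\overline{(-i\xi)^{\alpha/2}}\bigr]=\lvert\xi\rvert^{\alpha}\cos(\alpha\pi/2)$ on both half-lines, the two one-sided terms give the same real value so $\theta$ drops out, and $-\cos(\alpha\pi/2)>0$ for $\alpha\in(1,2)$), so the proposal is correct and takes essentially the same approach as the source the paper relies on.
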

 Since $U_N\subseteq H_0^{\alpha/2}$,   the well-posdeness of the discrete problem \eqref{weak-form-scheme_3} can be readily shown by the Lax-Milgram theorem.

 Before presenting the error estimates, we   need the following approximation properties.   Define    the $L_{\omega^{\sigma,\sigma^{\ast}}}^2$-orthogonal projection $\mathcal{P}_N^{\sigma,\sigma^{\ast}}:L_{\omega^{\sigma,\sigma^{\ast}}}^2\rightarrow \mathbb{P}_N $  such that $(\mathcal{P}_N^{\sigma,\sigma^{\ast}}u-u,v )_{\omega^{\sigma,\sigma^{\ast}}}=0$ for any $v\in \mathbb{P}_N,$ 
 or equivalently $\mathcal{P}_N^{\sigma,\sigma^{\ast}}(u)(x)=\sum_{n=0}^N\hat{u}_n P_n^{\sigma,\sigma^{\ast}}(x)$.
 Denote $\Pi_N^{\sigma,\sigma^{\ast}}u:=\omega^{\sigma,\sigma^{\ast}}\mathcal{P}_N^{\sigma,\sigma^{\ast}}(\omega^{-\sigma,-\sigma^{\ast}}u)$.
 \begin{lemma}\label{lem:approximation-property}
 	Let $\omega ^{-\sigma,-\sigma^{\ast}}u \in B^{m}_{ \omega^{\sigma,\sigma^{\ast}} }$ and	 $\mathcal{L}_{\theta}^{\alpha}u \in B^{m}_{ \omega^{\sigma^{\ast},\sigma} }$.  Then, for $0\leq m\leq N,$ we have the following estimates 
 	\begin{eqnarray}\label{error-weighted-L2}
 	\|u- \Pi_N^{\sigma,\sigma^{\ast}}u\|_{\omega^{-\sigma,-\sigma^{\ast}}}\leq cN^{-m}| \omega ^{-\sigma,-\sigma^{\ast}}u|_{B^{m}_{\omega^{\sigma,\sigma^{\ast}}}}
 	\end{eqnarray}
 	and 
 	\begin{eqnarray}\label{error-fractional-weighted}
 	\|\mathcal{L}_{\theta}^{\alpha}(u- \Pi_N^{\sigma,\sigma^{\ast}}u)\|_{\omega^{\sigma^{\ast},\sigma}} \leq cN^{-m}|\mathcal{L}_{\theta}^{\alpha}u|_{B^m_{\omega^{\sigma^{\ast},\sigma}}}.
 	\end{eqnarray}
 \end{lemma}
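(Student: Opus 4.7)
The plan is to reduce both estimates to the standard Jacobi $L^2$-projection estimate in non-uniformly weighted Sobolev spaces (see \cite{GuoW04}),
\[
\|v-\mathcal{P}_N^{\gamma,\beta}v\|_{\omega^{\gamma,\beta}}\leq cN^{-m}|v|_{B^m_{\omega^{\gamma,\beta}}}, \qquad 0\leq m\leq N+1,
\]
via two observations: (i) the weight $\omega^{\sigma,\sigma^{\ast}}$ cancels cleanly between $\Pi_N^{\sigma,\sigma^{\ast}}$ and the norm $\|\cdot\|_{\omega^{-\sigma,-\sigma^{\ast}}}$; (ii) by Lemma \ref{lem:frac-non-symm-jacobi} the operator $\mathcal{L}_{\theta}^{\alpha}$ intertwines the two projections in the sense $\mathcal{L}_{\theta}^{\alpha}\Pi_N^{\sigma,\sigma^{\ast}}=\mathcal{P}_N^{\sigma^{\ast},\sigma}\mathcal{L}_{\theta}^{\alpha}$ on functions of the form $u=\omega^{\sigma,\sigma^{\ast}}v$.

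\textbf{First estimate.} Setting $w=\omega^{-\sigma,-\sigma^{\ast}}u$, the definition gives $u-\Pi_N^{\sigma,\sigma^{\ast}}u=\omega^{\sigma,\sigma^{\ast}}\bigl(w-\mathcal{P}_N^{\sigma,\sigma^{\ast}}w\bigr)$, and since $\omega^{2\sigma,2\sigma^{\ast}}\cdot\omega^{-\sigma,-\sigma^{\ast}}=\omega^{\sigma,\sigma^{\ast}}$ a direct calculation yields
\[
\|u-\Pi_N^{\sigma,\sigma^{\ast}}u\|_{\omega^{-\sigma,-\sigma^{\ast}}}^2=\int_{-1}^1\bigl|w-\mathcal{P}_N^{\sigma,\sigma^{\ast}}w\bigr|^2\omega^{\sigma,\sigma^{\ast}}\,dx=\|w-\mathcal{P}_N^{\sigma,\sigma^{\ast}}w\|_{\omega^{\sigma,\sigma^{\ast}}}^2.
\]
Thus \eqref{error-weighted-L2} is exactly the classical Jacobi projection error applied to $w\in B^m_{\omega^{\sigma,\sigma^{\ast}}}$.

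\textbf{Second estimate.} Expand $w=\omega^{-\sigma,-\sigma^{\ast}}u=\sum_{n=0}^\infty u_n P_n^{\sigma,\sigma^{\ast}}$. Applying Lemma \ref{lem:frac-non-symm-jacobi} termwise gives
\[
\mathcal{L}_{\theta}^{\alpha}u=\sum_{n=0}^\infty u_n\lambda_{\theta,n}^{\alpha}P_n^{\sigma^{\ast},\sigma},\qquad \mathcal{L}_{\theta}^{\alpha}\Pi_N^{\sigma,\sigma^{\ast}}u=\sum_{n=0}^N u_n\lambda_{\theta,n}^{\alpha}P_n^{\sigma^{\ast},\sigma},
\]
so $\mathcal{L}_{\theta}^{\alpha}\Pi_N^{\sigma,\sigma^{\ast}}u=\mathcal{P}_N^{\sigma^{\ast},\sigma}(\mathcal{L}_{\theta}^{\alpha}u)$ by orthogonality of $\{P_n^{\sigma^{\ast},\sigma}\}$ in $L^2_{\omega^{\sigma^{\ast},\sigma}}$. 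Therefore
\[
\|\mathcal{L}_{\theta}^{\alpha}(u-\Pi_N^{\sigma,\sigma^{\ast}}u)\|_{\omega^{\sigma^{\ast},\sigma}}=\|\mathcal{L}_{\theta}^{\alpha}u-\mathcal{P}_N^{\sigma^{\ast},\sigma}(\mathcal{L}_{\theta}^{\alpha}u)\|_{\omega^{\sigma^{\ast},\sigma}},
\]
and \eqref{error-fractional-weighted} follows from the same Guo--Wang estimate with weight indices $(\sigma^{\ast},\sigma)$ applied to $\mathcal{L}_{\theta}^{\alpha}u\in B^m_{\omega^{\sigma^{\ast},\sigma}}$. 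When $m$ is not an integer, space interpolation extends both bounds.

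\textbf{Main obstacle.} Essentially the whole argument is a one-line reduction once the commutation identity $\mathcal{L}_{\theta}^{\alpha}\Pi_N^{\sigma,\sigma^{\ast}}=\mathcal{P}_N^{\sigma^{\ast},\sigma}\mathcal{L}_{\theta}^{\alpha}$ is in hand; that identity is the only non-routine step and depends on Lemma \ref{lem:frac-non-symm-jacobi} together with the legitimacy of applying $\mathcal{L}_{\theta}^{\alpha}$ termwise to the Jacobi expansion of $w$, which is guaranteed by the two regularity hypotheses $w\in B^m_{\omega^{\sigma,\sigma^{\ast}}}$ and $\mathcal{L}_{\theta}^{\alpha}u\in B^m_{\omega^{\sigma^{\ast},\sigma}}$. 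The remainder is bookkeeping of weight indices and invocation of the standard Jacobi approximation theory.
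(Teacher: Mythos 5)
Your proof is correct and follows essentially the same route as the paper: both reduce \eqref{error-weighted-L2} to the standard weighted Jacobi projection estimate via the cancellation $\|\omega^{\sigma,\sigma^{\ast}}g\|_{\omega^{-\sigma,-\sigma^{\ast}}}=\|g\|_{\omega^{\sigma,\sigma^{\ast}}}$, and both obtain \eqref{error-fractional-weighted} by applying Lemma \ref{lem:frac-non-symm-jacobi} termwise to identify $\mathcal{L}_{\theta}^{\alpha}(u-\Pi_N^{\sigma,\sigma^{\ast}}u)$ with the tail of the $P_n^{\sigma^{\ast},\sigma}$-expansion of $\mathcal{L}_{\theta}^{\alpha}u$, i.e.\ with $\mathcal{L}_{\theta}^{\alpha}u-\mathcal{P}_N^{\sigma^{\ast},\sigma}(\mathcal{L}_{\theta}^{\alpha}u)$. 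Your explicit statement of the intertwining identity $\mathcal{L}_{\theta}^{\alpha}\Pi_N^{\sigma,\sigma^{\ast}}=\mathcal{P}_N^{\sigma^{\ast},\sigma}\mathcal{L}_{\theta}^{\alpha}$ is, if anything, a slightly cleaner formulation of the step the paper writes in passing.
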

 \begin{proof}
 	For $\tilde{u}=\omega ^{-\sigma,-\sigma^{\ast}}u\in B^{m}_{ \omega^{\sigma,\sigma^{\ast}} }(I),$ we have the expansion as 
 	\begin{eqnarray}\label{L2expansion}
 	u=\omega^{\sigma,\sigma^{\ast}}\tilde{u}=\omega^{\sigma,\sigma^{\ast}} \sum_{n=0}^{\infty}{u}_nP_n^{\sigma,\sigma^{\ast}}.
 	\end{eqnarray}
 Noticing \eqref{L2expansion}, we obtain  
 	\begin{eqnarray}
 	&&u-\Pi_N^{\sigma,\sigma^{\ast}}u =\omega^{\sigma,\sigma^{\ast}}\sum_{n=N+1}^{\infty}{u}_nP_n^{\sigma,\sigma^{\ast}} =  \omega^{\sigma,\sigma^{\ast}} (\tilde{u} -\mathcal{P}_N^{\sigma,\sigma^*}\tilde{u}).
 	\end{eqnarray}
 	By  Lemma \ref{lem:frac-non-symm-jacobi}, it holds that 
 	\[ \mathcal{L}_{\theta}^{\alpha}(u-\Pi_N^{\sigma,\sigma^{\ast}}u) =\sum_{n=N+1}^{\infty}\lambda_{\theta,n}^{\alpha}{u}_nP_n^{\sigma^{\ast},\sigma}=\mathcal{L}_{\theta}^{\alpha}u-\Pi_N^{\sigma^{\ast},\sigma} (\mathcal{L}_{\theta}^{\alpha}u). \]
 		Then by the error estimate for the orthogonal projection  $\mathcal{P}_N^{\sigma,\sigma^*}$, e.g. in \cite{MaSun01},   we reach the conclusion.    	  
 \end{proof}

  In order to show convergence,   we also need Hardy-type inequality below.
 \begin{lemma}\label{lem:frac-hardy-ineq} \cite{LossS10}
 	Let $\Omega$ be a convex set and $1<\alpha<2.$  For any
    $v\in H^{\alpha/2}$   vanishing  on the boundary, it holds 
 	\begin{equation}
 |v|^2_{H^{\alpha/2}}  \geq C \int_{\Omega\times\Omega}\frac{|v(x)-v(y)|^2}{|x-y|^{n+\alpha}}dxdy	 \geq k_{n,\alpha}\int_{\Omega}\frac{|v(x)|^2}{d_{\Omega}(x)^{\alpha}}dx, 	\end{equation}
 	where $C$ and $k_{n,\alpha}$ are positive constants which only depend  on dimension $n$ and $\alpha$,  and $d_{\Omega}(x)$ denotes the distance from the point $x\in \Omega$ to the boundary of the $\Omega$.
 \end{lemma}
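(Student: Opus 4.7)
The plan is to prove the two inequalities in turn. For the first one, $|v|^2_{H^{\alpha/2}} \geq C\int_{\Omega\times\Omega}\frac{|v(x)-v(y)|^2}{|x-y|^{n+\alpha}}\,dx\,dy$, I would invoke the equivalence between the Fourier definition of the $H^{\alpha/2}$ seminorm and the Gagliardo double integral. Since $v\in H^{\alpha/2}$ vanishes on $\partial\Omega$, I extend $v$ by zero to a function $\tilde v$ on $\mathbb{R}^n$ and use the Plancherel-type identity
\[
\int_{\mathbb{R}^n}|\xi|^{\alpha}|\hat{\tilde v}(\xi)|^2\,d\xi \;=\; C_{n,\alpha}\int_{\mathbb{R}^n\times\mathbb{R}^n}\frac{|\tilde v(x)-\tilde v(y)|^2}{|x-y|^{n+\alpha}}\,dx\,dy.
\]
Restricting the right-hand side to $\Omega\times\Omega$ only decreases it, and this yields the first inequality with an explicit dimension- and order-dependent constant.

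For the second, and substantially harder, Loss--Sloane fractional Hardy inequality, my strategy is a slicing argument. For each direction $\omega\in S^{n-1}$ and each $x\in\Omega$, convexity of $\Omega$ ensures that the line $\{x+t\omega:t\in\mathbb{R}\}\cap\Omega$ is a single open interval $(a_\omega(x),b_\omega(x))$, and the restriction of $v$ to such a slice vanishes at both endpoints. On each slice I apply the one-dimensional fractional Hardy inequality
\[
\int_{a}^{b}\!\!\int_{a}^{b}\frac{|\phi(s)-\phi(t)|^2}{|s-t|^{1+\alpha}}\,ds\,dt \;\geq\; c_\alpha\int_{a}^{b}\frac{|\phi(s)|^2}{\delta_{(a,b)}(s)^{\alpha}}\,ds,
\]
where $\delta_{(a,b)}(s)=\min\{s-a,\,b-s\}$. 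I then integrate over the family of all lines parallel to $\omega$ (a Fubini-type disintegration) and average over $\omega\in S^{n-1}$, which reconstructs on the left a quantity comparable to $\int_{\Omega\times\Omega}\frac{|v(x)-v(y)|^2}{|x-y|^{n+\alpha}}\,dx\,dy$ and on the right a pointwise weighted integral of $|v(x)|^2$.

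The crux, and the main obstacle, is the geometric step: passing from the averaged one-dimensional boundary distances $\delta_{(a_\omega(x),b_\omega(x))}(x)$ back to the genuine distance $d_\Omega(x)$. This is exactly where convexity is essential. A supporting hyperplane at the nearest boundary point has $\Omega$ on one side, and for a slice through $x$ in direction $\omega$ the closer endpoint is no further than $d_\Omega(x)/|\cos\theta|$, where $\theta$ is the angle between $\omega$ and the outward normal. Consequently a bound of the form
\[
\int_{S^{n-1}} \frac{d\omega}{\delta_{(a_\omega(x),b_\omega(x))}(x)^{\alpha}} \;\leq\; \frac{C_{n,\alpha}}{d_\Omega(x)^{\alpha}}
\]
holds (the angular integral of $|\cos\theta|^{\alpha}$ converges for $\alpha<2$), and combining this with the averaged one-dimensional Hardy inequality completes the proof. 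Tracking the sharp constant $k_{n,\alpha}$ is delicate; in practice I would simply invoke the Loss--Sloane theorem rather than reproduce the fine constant analysis.
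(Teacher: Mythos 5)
The paper offers no proof of this lemma: it is quoted directly from Loss and Sloane \cite{LossS10}, so there is no internal argument to compare against. Your outline is essentially the proof strategy of that reference itself --- reduction to one-dimensional chords, the one-dimensional fractional Hardy inequality on each chord (valid for $\alpha/2>1/2$ precisely because $v$ vanishes at the chord endpoints, which lie on $\partial\Omega$), and an average over directions --- and your first step, zero-extension plus the Plancherel/Gagliardo equivalence followed by restriction to $\Omega\times\Omega$, is standard and correct.

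The crux step, however, is stated with the inequality pointing the wrong way. Since every chord endpoint lies on $\partial\Omega$, one always has $\delta_{(a_\omega(x),b_\omega(x))}(x)\ge d_\Omega(x)$, so the bound $\int_{S^{n-1}}\delta_{(a_\omega(x),b_\omega(x))}(x)^{-\alpha}\,d\omega\le C_{n,\alpha}\,d_\Omega(x)^{-\alpha}$ is trivially true and useless here: after slicing and applying the one-dimensional Hardy inequality you have the Gagliardo integral bounded below by $c\int_\Omega|v(x)|^2\bigl(\int_{S^{n-1}}\delta_{(a_\omega(x),b_\omega(x))}(x)^{-\alpha}\,d\omega\bigr)\,dx$, and to conclude you need the angular integral bounded \emph{below} by $c_{n,\alpha}\,d_\Omega(x)^{-\alpha}$. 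That is exactly what your supporting-hyperplane observation delivers: from $\delta_{(a_\omega(x),b_\omega(x))}(x)\le d_\Omega(x)/(\omega\cdot\nu)$ for directions with $\omega\cdot\nu>0$ one gets $\delta_{(a_\omega(x),b_\omega(x))}(x)^{-\alpha}\ge(\omega\cdot\nu)^{\alpha}\,d_\Omega(x)^{-\alpha}$, and integrating over the hemisphere yields the required lower bound with constant $\int_{\omega\cdot\nu>0}(\omega\cdot\nu)^{\alpha}\,d\omega>0$; the relevant point is positivity of this integral, not its convergence (which is automatic since $(\omega\cdot\nu)^{\alpha}\le 1$). With the inequality turned around the argument closes; as written, the chain of estimates does not.
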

 
With above Lemma, we can obtain the following result. 
 \begin{lemma}
 	For any $v\in H_0^{\alpha/2}$ with $1<\alpha\leq 2,$ we  have 
 	\begin{equation}\label{frac-hardy-ineq} 
 	\|v\|_{\omega ^{-\sigma ,-\sigma^{\ast}} }\leq C 
 	|v|_{H^{\alpha/2}},\quad 
 	\|v\|_{\omega ^{-\sigma^{\ast} ,-\sigma} }\leq C 
 	|v|_{H^{\alpha/2}}.
 	\end{equation}
 \end{lemma}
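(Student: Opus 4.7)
The plan is to reduce the weighted $L^2$ estimate to the Hardy-type bound stated in Lemma \ref{lem:frac-hardy-ineq} via a pointwise comparison of the weight $\omega^{-\sigma,-\sigma^{\ast}}$ with the negative power $d_I(x)^{-\alpha}$ of the distance to the boundary, where $d_I(x)=\min\{1-x,\,1+x\}$. Since $v\in H_0^{\alpha/2}$ and $\alpha/2>1/2$, the trace of $v$ is well defined and vanishes on $\partial I=\{-1,1\}$, so Lemma \ref{lem:frac-hardy-ineq} applies directly with $\Omega=I$ and $n=1$.

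The analytic core of the argument is the pointwise bound
\[
\omega^{-\sigma,-\sigma^{\ast}}(x) \leq C\, d_I(x)^{-\alpha}, \qquad x\in I,
\]
or equivalently $d_I(x)^{\alpha}\omega^{-\sigma,-\sigma^{\ast}}(x)\leq C$. Splitting $I$ at $x=0$: on $[0,1)$ we have $d_I(x)=1-x$, so using the identity $\sigma+\sigma^{\ast}=\alpha$ (from Lemma \ref{lem:frac-non-symm-jacobi}),
\[
d_I(x)^{\alpha}\omega^{-\sigma,-\sigma^{\ast}}(x)=(1-x)^{\alpha-\sigma}(1+x)^{-\sigma^{\ast}}=(1-x)^{\sigma^{\ast}}(1+x)^{-\sigma^{\ast}},
\]
which is uniformly bounded since $\sigma^{\ast}\in(0,1]$ and $1+x\geq 1$ on this interval. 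On $(-1,0]$ the analogous computation with $d_I(x)=1+x$ yields $(1+x)^{\sigma}(1-x)^{-\sigma}\leq C$. This verifies the pointwise comparison on all of $I$; what makes it just barely work at both endpoints is precisely $\sigma+\sigma^{\ast}=\alpha$ together with $\sigma,\sigma^{\ast}\in(0,1]$.

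Combining this pointwise bound with Lemma \ref{lem:frac-hardy-ineq} then gives
\[
\|v\|_{\omega^{-\sigma,-\sigma^{\ast}}}^{2}=\int_I|v(x)|^2\omega^{-\sigma,-\sigma^{\ast}}(x)\,dx \leq C\int_I\frac{|v(x)|^2}{d_I(x)^{\alpha}}\,dx \leq C\,|v|_{H^{\alpha/2}}^{2},
\]
which is the first inequality. The second inequality follows by replacing $(\sigma,\sigma^{\ast})$ with $(\sigma^{\ast},\sigma)$ everywhere in the argument, as the pointwise estimate is completely symmetric in the two exponents. There is no real obstacle here; the only substantive ingredient is the endpoint matching of the weight with $d_I^{-\alpha}$, which is built into the choice of $\sigma,\sigma^{\ast}$.
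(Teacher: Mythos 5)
Your proof is correct and follows essentially the same route as the paper: a pointwise comparison of the weight $\omega^{-\sigma,-\sigma^{\ast}}$ with $d_I(x)^{-\alpha}$ (the paper inserts the intermediate weight $\omega^{-\alpha/2,-\alpha/2}$, but this is the same idea), followed by the fractional Hardy inequality of Lemma \ref{lem:frac-hardy-ineq}. Your explicit verification that $\sigma+\sigma^{\ast}=\alpha$ and $\sigma,\sigma^{\ast}\in(0,1]$ make the endpoint comparison work is exactly the content the paper leaves implicit.
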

 \begin{proof} 
 	The fractional Hardy-type inequality in Lemma \ref{lem:frac-hardy-ineq} 	 leads to 
 	\begin{equation}\label{sp-ineq-1}
 	\|v\|_{\omega ^{- \sigma ,-\sigma^{\ast}}} \leq c   \|v \omega^{-\alpha/2}\| \leq  C 	 |v|_{H^{\alpha/2}} .
 	\end{equation}
    The second inequality follows similarly.  	  
 \end{proof}
 
 \begin{theorem}\label{thm:spectral-error-estimate-1}
 	Suppose that $u$ and $u_N$ satisfy the problems \eqref{spectral-bilinear} and  \eqref{weak-form-scheme_3}, respectively. 
 	If  $f\in B^r_{ \omega^{\sigma^{\ast}-1,\sigma-1}}$ with  $r\geq 0$, we have the following  error estimates:
 	\begin{eqnarray}
 	\|u-u_N\|_{\omega^{-\sigma,-\sigma^{\ast}}}\leq  C N^{-\gamma}\abs{ \omega ^{-\sigma,-\sigma^{\ast}}u}_{B^{\gamma  }_{\omega^{\sigma,\sigma^{\ast}}}}, \quad \gamma=(\alpha+1)\wedge r+ \alpha
 	\end{eqnarray}
 	when $\theta=0.5$ and 
 	\begin{eqnarray}
 	\|u-u_N\|_{\omega^{-\sigma,-\sigma^{\ast}}}&\leq&   cN^{\alpha-\gamma}\abs{ \mathcal{L}_{\theta}^{\alpha}u}_{B^{\gamma-\alpha}_{\omega^{\sigma^{\ast},\sigma}}} + CN^{-\gamma}\abs{ \omega ^{-\sigma,-\sigma^{\ast}}u}_{B^{\gamma}_{\omega^{\sigma,\sigma^{\ast}}}}
 	\end{eqnarray}
 	when	$\theta\neq 0.5.$
 \end{theorem}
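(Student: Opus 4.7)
The plan is a C\'ea-type energy estimate built from Galerkin orthogonality, the coercivity identity \eqref{frac-norm-equi}, the Hardy inequality \eqref{frac-hardy-ineq}, and the approximation bounds in Lemma \ref{lem:approximation-property}. For the symmetric case $\theta=1/2$ I will sharpen the rate by an Aubin--Nitsche duality. Throughout I view the continuous and discrete problems as posed on $H_0^{\alpha/2}$, where by \eqref{frac-norm-equi} the bilinear form $a(u,v):=(\mathcal{L}_\theta^\alpha u,v)+\mu(u,v)$ is coercive and, as a bounded map $\mathcal{L}_\theta^\alpha:H_0^{\alpha/2}\to(H_0^{\alpha/2})'$, continuous.

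First I would establish Galerkin orthogonality by subtracting \eqref{weak-form-scheme_3} from \eqref{spectral-bilinear}: $a(u-u_N,v_N)=0$ for every $v_N\in U_N$. Taking $v_N=\Pi_N^{\sigma,\sigma^{\ast}}u-u_N$ and using coercivity yields
\begin{equation*}
c_1^\alpha|u-u_N|^2_{H^{\alpha/2}}+\mu\|u-u_N\|^2 \le (\mathcal{L}_\theta^\alpha(u-u_N),u-\Pi_N^{\sigma,\sigma^{\ast}}u)+\mu(u-u_N,u-\Pi_N^{\sigma,\sigma^{\ast}}u).
\end{equation*}
The key auxiliary estimate $|u-\Pi_N^{\sigma,\sigma^{\ast}}u|_{H^{\alpha/2}}\le C\|\mathcal{L}_\theta^\alpha(u-\Pi_N^{\sigma,\sigma^{\ast}}u)\|_{\omega^{\sigma^{\ast},\sigma}}$ follows from \eqref{frac-norm-equi}, Cauchy--Schwarz in the dual weighted pairing $\omega^{\pm(\sigma^{\ast},\sigma)}$, and \eqref{frac-hardy-ineq}. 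Continuity of $\mathcal{L}_\theta^\alpha$ on $H_0^{\alpha/2}$ combined with Young's inequality then gives a C\'ea-type bound $|u-u_N|_{H^{\alpha/2}}\le C|u-\Pi_N^{\sigma,\sigma^{\ast}}u|_{H^{\alpha/2}}$. Feeding this into Lemma \ref{lem:approximation-property} (with $m=\gamma-\alpha$ for \eqref{error-fractional-weighted} and $m=\gamma$ for \eqref{error-weighted-L2}), invoking Theorem \ref{thm:fode-regularity-two-side} for the admissibility of these regularities, and passing from $|\cdot|_{H^{\alpha/2}}$ to $\|\cdot\|_{\omega^{-\sigma,-\sigma^{\ast}}}$ by \eqref{frac-hardy-ineq} together with the triangle inequality $\|u-u_N\|_{\omega^{-\sigma,-\sigma^{\ast}}}\le\|u-\Pi_N^{\sigma,\sigma^{\ast}}u\|_{\omega^{-\sigma,-\sigma^{\ast}}}+C|u-u_N|_{H^{\alpha/2}}+C|u-\Pi_N^{\sigma,\sigma^{\ast}}u|_{H^{\alpha/2}}$, produces the stated estimate for $\theta\ne1/2$.

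For $\theta=1/2$, $\mathcal{L}^\alpha_{1/2}$ is self-adjoint and $a(\cdot,\cdot)$ is symmetric. Setting $e=u-u_N$, I would introduce $\phi\in H_0^{\alpha/2}$ solving $\mathcal{L}^\alpha_{1/2}\phi+\mu\phi=\omega^{-2\sigma,-2\sigma^{\ast}}e$. Since $(e,\omega^{-2\sigma,-2\sigma^{\ast}}e)=\|e\|^2_{\omega^{-\sigma,-\sigma^{\ast}}}$, the symmetry of $a$, and Galerkin orthogonality give
\begin{equation*}
\|e\|^2_{\omega^{-\sigma,-\sigma^{\ast}}}=a(e,\phi)=a(e,\phi-\Pi_N^{\sigma,\sigma^{\ast}}\phi)\le C|e|_{H^{\alpha/2}}\,|\phi-\Pi_N^{\sigma,\sigma^{\ast}}\phi|_{H^{\alpha/2}}.
\end{equation*}
Theorem \ref{thm:fode-regularity-two-side} applied to the dual problem shows that $\phi$ gains $\alpha$ additional orders of regularity over $e$, so $|\phi-\Pi_N^{\sigma,\sigma^{\ast}}\phi|_{H^{\alpha/2}}$ carries an extra factor $N^{-\alpha}$ compared with $|e|_{H^{\alpha/2}}$, precisely cancelling the $N^{\alpha-\gamma}$ contribution and leaving the optimal $N^{-\gamma}$ rate. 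Non-integer $r$ is handled by interpolation as in Theorem \ref{thm:fode-regularity-two-side}.

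I expect the main technical obstacle to be the duality step for $\theta=1/2$: one must verify that $\omega^{-2\sigma,-2\sigma^{\ast}}e$ lies in the correct weighted Sobolev space so that Theorem \ref{thm:fode-regularity-two-side} delivers a dual solution whose $H^{\alpha/2}$ projection error is indeed $O(N^{-\alpha})$ times $|e|_{H^{\alpha/2}}$. Equally, a careful accounting of the interaction between the weights $\omega^{\pm(\sigma,\sigma^{\ast})}$ and $\omega^{\pm(\sigma-1,\sigma^{\ast}-1)}$ appearing in the two parts of Theorem \ref{thm:fode-regularity-two-side} and in Lemma \ref{lem:approximation-property}, and ensuring continuity of $a$ on $H_0^{\alpha/2}$ with constants independent of $\mu$, are needed so that the two Hardy conversions compose to give precisely the advertised rates.
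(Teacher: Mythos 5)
Your treatment of the case $\theta\neq 0.5$ is essentially the paper's argument in a different dress: the paper writes $u-u_N=\eta_N+e_N$ with $\eta_N=u-\Pi_N^{\sigma,\sigma^{\ast}}u$, tests the error equation with $e_N$, and bounds $(\mathcal{L}_\theta^\alpha\eta_N,e_N)\le\|\mathcal{L}_\theta^\alpha\eta_N\|_{\omega^{\sigma^{\ast},\sigma}}\|e_N\|_{\omega^{-\sigma^{\ast},-\sigma}}$ using \eqref{frac-norm-equi}, \eqref{frac-hardy-ineq} and Young's inequality --- exactly the ingredients of your C\'ea-type bound. Both routes land on the same two-term estimate, so that half is fine (modulo the shared, implicit assumption $\mu\ge 0$ in the coercivity step).

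The case $\theta=0.5$ is where your proposal has a genuine gap. The paper does \emph{not} use duality there; it observes that for $\sigma=\sigma^{\ast}=\alpha/2$ the projection $\Pi_N^{\sigma,\sigma^{\ast}}$ is simultaneously the energy projection: by Lemma \ref{lem:frac-non-symm-jacobi}, $\mathcal{L}_{0.5}^{\alpha}\eta_N=\sum_{n>N}\lambda_{0.5,n}^{\alpha}u_nP_n^{\alpha/2,\alpha/2}$, which is $L^2$-orthogonal to every $v_N\in\omega^{\alpha/2,\alpha/2}\mathbb{P}_N$, so the term $(\mathcal{L}_{0.5}^{\alpha}\eta_N,e_N)$ vanishes identically and the optimal rate follows at once from $\|e_N\|_{\omega^{-\sigma,-\sigma^{\ast}}}\le C\|\eta_N\|_{\omega^{-\sigma,-\sigma^{\ast}}}$. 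Your Aubin--Nitsche substitute does not close as written. With dual data $g=\omega^{-2\sigma,-2\sigma^{\ast}}e=\omega^{-\alpha,-\alpha}e$, the final step of the duality argument requires a bound of the form $\|g\|_{X}\le C\|e\|_{\omega^{-\sigma,-\sigma^{\ast}}}$, where $X$ is whatever data space makes Theorem \ref{thm:fode-regularity-two-side} (or Theorem \ref{thm:fode-regularity-flap-nonsymm}) deliver an $\alpha$-order regularity shift for $\phi$. But, for instance, $\|g\|_{\omega^{\sigma^{\ast},\sigma}}=\|e\|_{\omega^{-3\alpha/2,-3\alpha/2}}$, which is strictly stronger than the target norm $\|e\|_{\omega^{-\alpha/2,-\alpha/2}}$; and since $e$ behaves like $\omega^{\alpha/2,\alpha/2}$ times a smooth factor near the endpoints, $g\sim\omega^{-\alpha/2,-\alpha/2}$ is not even in $L^2_{\omega^{\sigma^{\ast}-1,\sigma-1}}$, so the hypotheses of the regularity theorems (which also ask for $\phi\in L^\infty$ or $r\ge 0$ data) are not met. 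The cancellation you assert is therefore not available, and the circularity cannot be removed without substantial extra work. You correctly flagged this as the main obstacle, but it is not a technicality: it is the missing idea, and the orthogonality observation above is what replaces it in the paper.
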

 \begin{proof}
 	Denote $\eta_N=u-\Pi_N^{\sigma,\sigma^{\ast}}u$ and $e_N=\Pi_N^{\sigma,\sigma^{\ast}}u-u_N,$ then $u-u_N=\eta_N+e_N.$
 	Combining  \eqref{spectral-bilinear} and \eqref{weak-form-scheme_3}, we can obtain the  following error equation:
 	\begin{eqnarray*}
 		(\mathcal{L}_{\theta}^{\alpha}e_N,v_N)+\mu (e_N,v_N)=-  (\mathcal{L}_{\theta}^{\alpha}\eta_N,v_N)-\mu (\eta_N,v_N),\quad  \forall v_N\in U_N.
 	\end{eqnarray*}
 	Taking $v_N=e_N$ leads to 
 	\begin{eqnarray}
 	(\mathcal{L}_{\theta}^{\alpha}e_N,e_N)+\mu \|e_N\|^2  &=&-  (\mathcal{L}_{\theta}^{\alpha}\eta_N,e_N)-\mu (\eta_N,e_N) .\label{spectral-eq-3} 
 	\end{eqnarray}
 	
 	When $\theta=0.5$,  $\sigma=\sigma^{\ast}=\alpha/2$ and    the orthogonal property leads to 	\[ (\mathcal{L}_{0.5}^{\alpha}(u-\Pi_N^{\alpha/2,\alpha/2}u),v_N ) =0,\quad \forall v_N\in U_N= \omega^{\alpha/2,\alpha/2}\mathbb{P}_N.\]
 Thus, when $\theta=0.5$, \eqref{spectral-eq-3} reduces to 
 \begin{equation}
 	(\mathcal{L}_{0.5}^{\alpha}e_N,e_N)+\mu \|e_N\|^2 =-\mu (\eta_N,e_N). \label{spectral-eq-4}
 \end{equation}	
 By the  	norm equivalence \eqref{frac-norm-equi} and fractional Hardy-type inequality 	\eqref{frac-hardy-ineq} that
 	\begin{eqnarray}\label{spectral-ineq-1}
 	(\mathcal{L}_{0.5}^{\alpha}e_N,e_N)+\mu \|e_N\|^2\geq C \|e_N\|^2_{\omega^{-\sigma,-\sigma^{\ast}}}.
 	\end{eqnarray}
 	 Combining 
 	\eqref{spectral-eq-4} and \eqref{spectral-ineq-1}, we have 
 	\begin{eqnarray}\label{error}
 	\|e_N\|^2_{\omega^{-\sigma,-\sigma^{\ast}}} \leq  C\|\eta_N\| \|e_N\|  \leq C \|\eta_N\|_{\omega^{-\sigma,-\sigma^{\ast}}} \|e_N\|_{\omega^{-\sigma,-\sigma^{\ast}}} .
 	\end{eqnarray}
 	Thus  $ \|e_N\|_{\omega^{-\sigma,-\sigma^{\ast}}}\leq C\|\eta_N\|_{\omega^{-\sigma,-\sigma^{\ast}}}.$
 	By the regularity in Theorem
    \ref{thm:fode-regularity-two-side} and the projection
    estimate  in Lemma  \ref{lem:approximation-property}, we arrive at
 	\begin{eqnarray*}\label{}
 		\|e_N\|_{\omega^{-\sigma,-\sigma^{\ast}}}
 		&\leq&   CN^{-\gamma}\abs{ \omega ^{-\sigma,-\sigma^{\ast}}u}_{B^{\gamma}_{\omega^{\sigma,\sigma^{\ast}}}},\quad   \theta =0.5.
 	\end{eqnarray*}
 	
 	We now  turn to the case $ \theta \neq 0.5$.  	 
 	Using the norm equivalence \eqref{frac-norm-equi} and the fractional Hardy-type inequality 	\eqref{frac-hardy-ineq}, we have 
 	\begin{eqnarray*}\label{}
 	c_1^{\alpha}|e_N|^2_{H^{{\alpha}/{2}}}+\mu \|e_N\|^2
 	&\leq & \| \mathcal{L}_{\theta}^{\alpha}\eta_N\|_{\omega^{\sigma^{\ast},\sigma}}\|e_N\|_{\omega^{-\sigma^{\ast},-\sigma}}+\mu \|\eta_N\| \|e_N\|\nonumber\\
 	&\leq& \| \mathcal{L}_{\theta}^{\alpha}\eta_N\|_{\omega^{\sigma^{\ast},\sigma}} \cdot  c_2^{\alpha} 
 	|e_N|_{H^{\alpha/2}}+\mu \|\eta_N\| \|e_N\| \nonumber\\
 	&\leq & \frac{(c_2^{\alpha} )^2} {2c_1^{\alpha}}    \| \mathcal{L}_{\theta}^{\alpha}\eta_N\|^2_{\omega^{\sigma^{\ast},\sigma}} + \frac{c_1^{\alpha}}{2}  
 	|e_N|^2_{H^{\alpha/2}} +\frac{\mu}{2} \|\eta_N\|^2 +\frac{\mu}{2} \|e_N\|^2.
 	\end{eqnarray*}	
 	Thus it follows that 
 	\begin{eqnarray*}\label{}
 	C \|e_N\|^2_{\omega^{-\sigma,-\sigma^{\ast}}}\leq c_1^{\alpha}|e_N|^2_{H^{{\alpha}/{2}}}+\mu \|e_N\|^2
 	\leq  \frac{(c_2^{\alpha} )^2} {c_1^{\alpha}}    \| \mathcal{L}_{\theta}^{\alpha}\eta_N\|^2_{\omega^{\sigma^{\ast},\sigma}} + c \|\eta_N\|_{\omega^{-\sigma,-\sigma^{\ast}}}^2 .
 	\end{eqnarray*}
 	From  Theorem  \ref{thm:fode-regularity-two-side}  and Lemma  \ref{lem:approximation-property}, we  obtain
 	\begin{eqnarray*}\label{}
 		\|e_N\|_{\omega^{-\sigma,-\sigma^{\ast}}}
 		&\leq&  cN^{\alpha-\gamma}}\|{\mathcal{L}_{\theta}^{\alpha}u\|_{B^{\gamma-\alpha}_{\omega^{\sigma^{\ast},\sigma}}} + CN^{-\gamma}\| \omega ^{-\sigma,-\sigma^{\ast}}u\|_{B^{\gamma}_{\omega^{\sigma,\sigma^{\ast}}}}.
 	\end{eqnarray*}
For both cases $\theta=0.5$ and $\theta\neq 0.5$, 
we arrive at the desired results by  the triangle inequality $\|u-u_N\|_{\omega^{-\sigma,-\sigma^{\ast}}}\leq \|e_N\|_{\omega^{-\sigma,-\sigma^{\ast}}}+\|\eta_N\|_{\omega^{-\sigma,-\sigma^{\ast}}}$. 
 \end{proof}	
By Theorems \ref{thm:fode-regularity-flap-nonsymm} and \ref{thm:spectral-error-estimate-1},  it is straightforward to obtain the following result. 
  \begin{corollary}\label{cor:spectral-error-estimate-1}
  	Suppose that $u$ and $u_N$ satisfy the problems \eqref{spectral-bilinear} and  \eqref{weak-form-scheme_3}, respectively. 
  	If  $f\in B^r_{ \omega^{\sigma^{\ast},\sigma}}$ and $u\in L^\infty$,  we have the following  error estimates:
  	\begin{eqnarray}
  	\|u-u_N\|_{\omega^{-\sigma,-\sigma^{\ast}}}\leq  C N^{-\gamma}\abs{ \omega ^{-\sigma,-\sigma^{\ast}}u}_{B^{\gamma  }_{\omega^{\sigma,\sigma^{\ast}}}}, \quad \gamma=\alpha\wedge r+ \alpha
  	\end{eqnarray}
  	when $\theta=0.5$ and 
  	\begin{eqnarray}
  	\|u-u_N\|_{\omega^{-\sigma,-\sigma^{\ast}}}&\leq&   cN^{\alpha-\gamma}\abs{ \mathcal{L}_{\theta}^{\alpha}u}_{B^{\gamma-\alpha}_{\omega^{\sigma^{\ast},\sigma}}} + CN^{-\gamma}\abs{ \omega ^{-\sigma,-\sigma^{\ast}}u}_{B^{\gamma}_{\omega^{\sigma,\sigma^{\ast}}}}
  	\end{eqnarray}
  	when	$\theta\neq 0.5.$
  \end{corollary}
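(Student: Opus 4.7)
The plan is to repeat the proof of Theorem \ref{thm:spectral-error-estimate-1} almost verbatim, substituting the regularity result of Theorem \ref{thm:fode-regularity-flap-nonsymm} for that of Theorem \ref{thm:fode-regularity-two-side}. The proof of Theorem \ref{thm:spectral-error-estimate-1} is structural: it only needs (i) Galerkin orthogonality, (ii) the coercivity/Hardy bounds \eqref{frac-norm-equi}--\eqref{frac-hardy-ineq}, (iii) the projection estimates of Lemma \ref{lem:approximation-property}, and (iv) an input regularity index $\gamma$ for $\omega^{-\sigma,-\sigma^\ast}u$ together with a companion index $\gamma-\alpha$ for $\mathcal{L}_\theta^\alpha u$. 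Only step (iv) changes here.

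First, I would split $u-u_N=\eta_N+e_N$ with $\eta_N=u-\Pi_N^{\sigma,\sigma^\ast}u$ and $e_N=\Pi_N^{\sigma,\sigma^\ast}u-u_N$, and write the error equation $(\mathcal{L}_\theta^\alpha e_N,v_N)+\mu(e_N,v_N)=-(\mathcal{L}_\theta^\alpha\eta_N,v_N)-\mu(\eta_N,v_N)$ for all $v_N\in U_N$, taking $v_N=e_N$. When $\theta=0.5$, the symmetric case forces Galerkin orthogonality $(\mathcal{L}_{0.5}^\alpha\eta_N,v_N)=0$, so the argument reduces to $\|e_N\|_{\omega^{-\sigma,-\sigma^\ast}}\le C\|\eta_N\|_{\omega^{-\sigma,-\sigma^\ast}}$; when $\theta\ne 0.5$, one additionally bounds $\|\mathcal{L}_\theta^\alpha\eta_N\|_{\omega^{\sigma^\ast,\sigma}}$ via Lemma \ref{lem:approximation-property}, using the second estimate \eqref{error-fractional-weighted}.

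The only substantive replacement is the input regularity. By Theorem \ref{thm:fode-regularity-flap-nonsymm} applied with $f\in B^r_{\omega^{\sigma^\ast,\sigma}}$ and $u\in L^\infty$, I get $\omega^{-\sigma,-\sigma^\ast}u\in B^{\alpha\wedge r+\alpha}_{\omega^{\sigma,\sigma^\ast}}$, which fixes the regularity index $\gamma=\alpha\wedge r+\alpha$. For the companion index in the $\theta\ne 0.5$ branch, I use $\mathcal{L}_\theta^\alpha u=f-\mu u$: since $f\in B^r_{\omega^{\sigma^\ast,\sigma}}$ and the bootstrapping inside Theorem \ref{thm:fode-regularity-flap-nonsymm} already provides $u\in B^{\alpha\wedge r}_{\omega^{\sigma^\ast,\sigma}}$ (this is exactly the intermediate step noted there, $u\in B^{r\wedge 0+\alpha}_{\omega^{\sigma^\ast,\sigma}}$ iterated once more), I conclude $\mathcal{L}_\theta^\alpha u\in B^{\alpha\wedge r}_{\omega^{\sigma^\ast,\sigma}}=B^{\gamma-\alpha}_{\omega^{\sigma^\ast,\sigma}}$, which is what Lemma \ref{lem:approximation-property} needs.

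Plugging these two ingredients into the two displayed bounds in the proof of Theorem \ref{thm:spectral-error-estimate-1} and finishing with the triangle inequality $\|u-u_N\|_{\omega^{-\sigma,-\sigma^\ast}}\le\|e_N\|_{\omega^{-\sigma,-\sigma^\ast}}+\|\eta_N\|_{\omega^{-\sigma,-\sigma^\ast}}$ yields both claimed estimates. No step is genuinely hard; the only point requiring a moment of care is verifying that Theorem \ref{thm:fode-regularity-flap-nonsymm} does produce enough regularity for $\mathcal{L}_\theta^\alpha u$ in the $\theta\ne 0.5$ branch, since it is stated in terms of $\omega^{-\sigma,-\sigma^\ast}u$ rather than $u$ itself—but the conversion to a bound on $u$ (and hence on $\mu u$) is the same direct calculation used in the proof of Theorem \ref{thm:fode-regularity-two-side}.
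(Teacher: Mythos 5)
Your proposal is correct and matches the paper's intent exactly: the paper derives this corollary in one line by combining the regularity of Theorem \ref{thm:fode-regularity-flap-nonsymm} with the error analysis of Theorem \ref{thm:spectral-error-estimate-1}, which is precisely the substitution you carry out. Your additional care in checking that the bootstrapping in Theorem \ref{thm:fode-regularity-flap-nonsymm} supplies $\mathcal{L}_{\theta}^{\alpha}u\in B^{\gamma-\alpha}_{\omega^{\sigma^{\ast},\sigma}}$ for the $\theta\neq 0.5$ branch is a detail the paper leaves implicit, and it is handled correctly.
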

 
 \section{Spectral Petrov-Galerkin method and error analysis}
 
The spectral Petrov-Galerkin 
 method is to   find $u_N\in U_N$ such that 
 \begin{eqnarray}\label{PG-scheme}
 (\mathcal{L}_{\theta}^{\alpha}u_N,v_N)+\mu (u_N,v_N)=(f,v_N),\quad \forall v_N\in V_N.
 \end{eqnarray}
The method is implicitly discussed in \cite{ErvinHR16} and is
fully discussed in \cite{MaoKar17} when $mu=0$.  
Here  we define the finite dimensional space $ V_N:= \omega^{\sigma^{\ast},\sigma}{\mathbb{P}_N}=\text{Span}\{\varphi_0,\varphi_1,\ldots, \varphi_N \}$ and 
 $
 \varphi_k(x):=(1-x)^{\sigma^{\ast}}(1+x)^{\sigma}P^{\sigma^{\ast},\sigma}_k(x)$.
 
 For  implementation,    plugging $u_N=\sum_{n=0}^N \hat{u}_n \phi_n(x)$ in  \eqref{PG-scheme} and taking $v_N=\varphi_k(x)$,  we obtain from   Lemma \ref{lem:frac-non-symm-jacobi}  and the orthogonality of Jacobi polynomials that
 \begin{equation}\label{PG-scheme-2}
 \lambda_{\theta,k}^{\alpha}h_k^{\sigma^{\ast},\sigma}\hat{u}_k +\mu\sum_{n=0}^N  M_{k,n}  \hat{u}_n
 =   (f, \varphi_k),\quad k=0,1,2,\cdots,N,
 \end{equation}
 where $   \lambda_{\theta,k}^{\alpha}$ is defined in Lemma \ref{lem:frac-non-symm-jacobi} and
 \begin{eqnarray}
 M_{k,n} =\int_{-1}^{1}
 (1-x^2)^{\alpha}P_n^{\sigma,\,\sigma^{\ast}}(x)P_k^{\sigma^{\ast},\,\sigma}(x) \,dx .
 \end{eqnarray}
 Here $M_{k,n}$ and $f_k = (f, \varphi_k)$ can be computed
 similarly  as in  the last section.

 Now we turn to the analysis of the spectral Petrov-Galerkin method. 
 
 \begin{lemma}\label{sp-lem-emb}
 	Let $\alpha\in (1,2).$ Suppose   $u$ satisfies   $ \omega^{-\sigma,-\sigma^{\ast}} u\in L^2_{\omega^{\sigma,\sigma^{\ast}}}$ and  $\|\mathcal{L}_{\theta}^{\alpha}u\|_{\omega^{\sigma^{\ast},\sigma}} <\infty$. Then we have 
 	\begin{eqnarray}
 	\lambda_{\theta,0}^{\alpha} \|u\|_{\omega^{\sigma^{\ast},\sigma}} \leq\lambda_{\theta,0}^{\alpha} \|u\|_{\omega^{-\sigma,-\sigma^{\ast}}}   \leq 	\|\mathcal{L}^{\alpha}_{\theta}u\|_{\omega^{\sigma^{\ast},\sigma}}  .
 	\end{eqnarray}	
 	
 \end{lemma}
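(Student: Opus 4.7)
The plan is to split the lemma into the two inequalities and prove each one separately. For the first inequality, I would observe that the ratio of the two weights satisfies
\[
\frac{\omega^{\sigma^{\ast},\sigma}(x)}{\omega^{-\sigma,-\sigma^{\ast}}(x)}=(1-x)^{\sigma+\sigma^{\ast}}(1+x)^{\sigma+\sigma^{\ast}}=(1-x^{2})^{\alpha}\le 1
\]
for every $x\in(-1,1)$, since $\alpha>0$ and $1-x^{2}\le 1$. Multiplying by $u(x)^{2}$ and integrating immediately gives $\|u\|_{\omega^{\sigma^{\ast},\sigma}}\le\|u\|_{\omega^{-\sigma,-\sigma^{\ast}}}$, hence the first inequality upon multiplication by $\lambda^{\alpha}_{\theta,0}$.

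For the second inequality, the idea is a Parseval-type comparison using the pseudo-eigenfunctions of Lemma \ref{lem:frac-non-symm-jacobi}. Since $\omega^{-\sigma,-\sigma^{\ast}}u\in L^{2}_{\omega^{\sigma,\sigma^{\ast}}}$, I would expand $u=\omega^{\sigma,\sigma^{\ast}}\sum_{n=0}^{\infty}u_{n}P_{n}^{\sigma,\sigma^{\ast}}$. A direct computation using the weighted orthogonality \eqref{eq:orthogonality-of-jacobi-poly} gives
\[
\|u\|_{\omega^{-\sigma,-\sigma^{\ast}}}^{2}=\sum_{n=0}^{\infty}u_{n}^{2}\,h_{n}^{\sigma,\sigma^{\ast}},
\]
while Lemma \ref{lem:frac-non-symm-jacobi} yields $\mathcal{L}^{\alpha}_{\theta}u=\sum_{n=0}^{\infty}\lambda^{\alpha}_{\theta,n}u_{n}P_{n}^{\sigma^{\ast},\sigma}$ and therefore
\[
\|\mathcal{L}^{\alpha}_{\theta}u\|_{\omega^{\sigma^{\ast},\sigma}}^{2}=\sum_{n=0}^{\infty}(\lambda^{\alpha}_{\theta,n})^{2}u_{n}^{2}\,h_{n}^{\sigma^{\ast},\sigma},
\]
where $h_{n}^{\sigma^{\ast},\sigma}=h_{n}^{\sigma,\sigma^{\ast}}$ by the symmetry of \eqref{eq:orthogonality-of-jacobi-poly-normalized-constant}.

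It then remains to show the termwise bound $(\lambda^{\alpha}_{\theta,0})^{2}\le(\lambda^{\alpha}_{\theta,n})^{2}$ for every $n\ge 0$. From the closed form in Lemma \ref{lem:frac-non-symm-jacobi}, $\lambda^{\alpha}_{\theta,n}$ has a fixed sign (the prefactor $-\sin(\pi\alpha)/[\sin(\pi\sigma)+\sin(\pi\sigma^{\ast})]$ is positive for $\alpha\in(1,2)$ and $\sigma,\sigma^{\ast}\in(0,1]$), and its absolute value equals a constant times $\Gamma(\alpha+n+1)/n!$. Because
\[
\frac{\Gamma(\alpha+n+2)/(n+1)!}{\Gamma(\alpha+n+1)/n!}=\frac{\alpha+n+1}{n+1}>1,
\]
this sequence is strictly increasing in $n$, so $|\lambda^{\alpha}_{\theta,n}|\ge\lambda^{\alpha}_{\theta,0}$. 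Combining with the Parseval identities above produces the desired $\lambda^{\alpha}_{\theta,0}\|u\|_{\omega^{-\sigma,-\sigma^{\ast}}}\le\|\mathcal{L}^{\alpha}_{\theta}u\|_{\omega^{\sigma^{\ast},\sigma}}$.

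I do not anticipate a serious obstacle: everything reduces to the explicit Jacobi expansion and the elementary inequality $(1-x^{2})^{\alpha}\le 1$. The only point requiring a bit of care is justifying that the expansions for $u$ and $\mathcal{L}^{\alpha}_{\theta}u$ converge in the respective weighted $L^{2}$ spaces, which follows from the two finiteness assumptions in the hypotheses together with the orthogonality of the Jacobi polynomials; I would state this as a short preliminary remark before carrying out the Parseval computation.
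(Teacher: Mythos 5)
Your proposal is correct and follows essentially the same route as the paper: expand $u=\omega^{\sigma,\sigma^{\ast}}\sum_n u_nP_n^{\sigma,\sigma^{\ast}}$, use the Parseval identities $\|u\|^2_{\omega^{-\sigma,-\sigma^{\ast}}}=\sum_n u_n^2h_n^{\sigma,\sigma^{\ast}}$ and $\|\mathcal{L}^{\alpha}_{\theta}u\|^2_{\omega^{\sigma^{\ast},\sigma}}=\sum_n(\lambda^{\alpha}_{\theta,n})^2u_n^2h_n^{\sigma^{\ast},\sigma}$ together with $h_n^{\sigma,\sigma^{\ast}}=h_n^{\sigma^{\ast},\sigma}$, and conclude from the monotonicity of $\lambda^{\alpha}_{\theta,n}$. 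Your explicit verification of the first inequality via $(1-x^2)^{\alpha}\le 1$ and of the positivity and monotonicity of the eigenvalues only fills in details the paper leaves implicit.
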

 \begin{proof}
 	For $u$  satisfying      $ \omega^{-\sigma,-\sigma^{\ast}} u\in L^2_{\omega^{\sigma,\sigma^{\ast}}},$  we write 
 	\begin{eqnarray}\label{L2weighted-expansion}
 	u=\omega^{\sigma,\sigma^{\ast}} \sum_{n=0}^{\infty}{u}_nP_n^{\sigma,\sigma^{\ast}}.
 	\end{eqnarray}
 	Given the expansion \eqref{L2weighted-expansion}, we derive from Lemma \ref{lem:frac-non-symm-jacobi}  that 
 	\begin{eqnarray*}
 		&&\|u\|^2_{\omega^{-\sigma,-\sigma^{\ast}}}=\sum_{n=0}^{\infty}|{u}_n|^2h_n^{\sigma,\sigma^{\ast}},\quad\|\mathcal{L}^{\alpha}_{\theta}u\|^2_{\omega^{\sigma^{\ast},\sigma}}=\sum_{n=0}^{\infty}(\lambda_{\theta,n}^{\alpha})^2|{u}_n|^2h^{\sigma^{\ast},\sigma}_n,
 	\end{eqnarray*}
 	where by \eqref{eq:orthogonality-of-jacobi-poly-normalized-constant}, we have
 	$h_{n}^{\sigma,\sigma^{\ast}}=h_{n}^{\sigma^{\ast},\sigma}$.
 	Noticing  the sequence $\{\lambda_{\theta,n}^{\alpha}\}$ is monotonically increasing,
 	we have 
 	\begin{eqnarray*}
 		\lambda_{\theta,0}^{\alpha} \|u\|_{\omega^{\sigma^{\ast},\sigma}} \leq\lambda_{\theta,0}^{\alpha} \|u\|_{\omega^{-\sigma,-\sigma^{\ast}}}   \leq 	\|\mathcal{L}^{\alpha}_{\theta}u\|_{\omega^{\sigma^{\ast},\sigma}}  .
 	\end{eqnarray*}
 	This completes the proof.  
 \end{proof}


 \begin{theorem}[Stability]
 	Assume that $|\mu|\leq \lambda_{\theta,0}^{\alpha}/2.$ 
 	The problem \eqref{PG-scheme} admits a unique solution $u_N$ such that 
 	$$	\|\mathcal{L}^{\alpha}_{\theta}u_N\|_{\omega^{\sigma^{\ast},\sigma}}\leq C\|f\| _{\omega^{\sigma^{\ast},\sigma}}. $$
 \end{theorem}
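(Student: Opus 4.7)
The plan is to exploit the fact, visible from Lemma \ref{lem:frac-non-symm-jacobi}, that $\mathcal{L}_{\theta}^{\alpha}$ sends $U_N$ into $\mathbb{P}_N$. Writing $u_N=\omega^{\sigma,\sigma^{\ast}}\sum_{n=0}^N \hat u_n P_n^{\sigma,\sigma^{\ast}}$, we obtain $\mathcal{L}_{\theta}^{\alpha}u_N=\sum_{n=0}^N \hat u_n\lambda_{\theta,n}^{\alpha}P_n^{\sigma^{\ast},\sigma}\in\mathbb{P}_N$, so the function
\[
v_N := \omega^{\sigma^{\ast},\sigma}\mathcal{L}_{\theta}^{\alpha}u_N
\]
lies in $V_N$ and is therefore an admissible test function in \eqref{PG-scheme}. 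This is the key choice that drives the whole argument.

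Substituting this $v_N$ into \eqref{PG-scheme} the principal-part term becomes
\[
(\mathcal{L}_{\theta}^{\alpha}u_N,v_N)=\int_{-1}^{1}(\mathcal{L}_{\theta}^{\alpha}u_N)^2\,\omega^{\sigma^{\ast},\sigma}\,dx=\|\mathcal{L}_{\theta}^{\alpha}u_N\|_{\omega^{\sigma^{\ast},\sigma}}^{2},
\]
which is exactly the quantity we want to control. For the reaction term I apply Cauchy--Schwarz followed by Lemma \ref{sp-lem-emb}:
\[
|\mu(u_N,v_N)|\le|\mu|\,\|u_N\|_{\omega^{\sigma^{\ast},\sigma}}\,\|\mathcal{L}_{\theta}^{\alpha}u_N\|_{\omega^{\sigma^{\ast},\sigma}}\le\frac{|\mu|}{\lambda_{\theta,0}^{\alpha}}\|\mathcal{L}_{\theta}^{\alpha}u_N\|_{\omega^{\sigma^{\ast},\sigma}}^{2}.
\]
The right-hand side is handled by another Cauchy--Schwarz:
\[
|(f,v_N)|\le\|f\|_{\omega^{\sigma^{\ast},\sigma}}\,\|\mathcal{L}_{\theta}^{\alpha}u_N\|_{\omega^{\sigma^{\ast},\sigma}}.
\]
Combining the three displays and using the hypothesis $|\mu|/\lambda_{\theta,0}^{\alpha}\le 1/2$ absorbs the $\mu$-term into the left-hand side and yields
\[
\tfrac{1}{2}\|\mathcal{L}_{\theta}^{\alpha}u_N\|_{\omega^{\sigma^{\ast},\sigma}}^{2}\le\|f\|_{\omega^{\sigma^{\ast},\sigma}}\,\|\mathcal{L}_{\theta}^{\alpha}u_N\|_{\omega^{\sigma^{\ast},\sigma}},
\]
which gives the claimed bound with $C=2$.

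Finally, uniqueness reduces to showing that $f\equiv 0$ forces $u_N=0$: the stability bound just established gives $\mathcal{L}_{\theta}^{\alpha}u_N=0$, whence Lemma \ref{sp-lem-emb} (applied to the finite-dimensional $u_N$, for which both sides are finite) forces $\|u_N\|_{\omega^{-\sigma,-\sigma^{\ast}}}=0$, i.e.\ $u_N\equiv 0$. Since \eqref{PG-scheme-2} is a square linear system, injectivity implies solvability and therefore existence and uniqueness follow. The main conceptual obstacle is identifying the right test function; once $v_N=\omega^{\sigma^{\ast},\sigma}\mathcal{L}_{\theta}^{\alpha}u_N$ is chosen the rest is a routine Cauchy--Schwarz/absorption argument, and the smallness hypothesis on $|\mu|$ is exactly what is needed to absorb the low-order contribution through the spectral gap $\lambda_{\theta,0}^{\alpha}$.
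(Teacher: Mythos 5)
Your proposal is correct and follows essentially the same route as the paper: the same test function $v_N=\omega^{\sigma^{\ast},\sigma}\mathcal{L}_{\theta}^{\alpha}u_N$, Cauchy--Schwarz on the reaction and source terms, Lemma \ref{sp-lem-emb} to absorb the $\mu$-term via the hypothesis $|\mu|\le\lambda_{\theta,0}^{\alpha}/2$, yielding $C=2$. Your explicit verification that $v_N\in V_N$ and the square-linear-system argument for existence are details the paper leaves implicit, but the substance is identical.
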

 
 \begin{proof}
 	Take $v_N=\omega^{\sigma^{\ast},\sigma}\mathcal{L}_{\theta}^{\alpha}u_N$ in \eqref{PG-scheme}. By Lemma \ref{sp-lem-emb}, we get 
 	\begin{eqnarray*}
 		\|\mathcal{L}_{\theta}^{\alpha}u_N\|^2_{\omega^{\sigma^{\ast},\sigma}} &=&-\mu (u_N,\omega^{\sigma^{\ast},\sigma}\mathcal{L}_{\theta}^{\alpha}u_N)+(f,\omega^{\sigma^{\ast},\sigma}\mathcal{L}_{\theta}^{\alpha}u_N)\\
 		&\leq & |\mu| \|u_N\|_{\omega^{\sigma^{\ast},\sigma}} \|\mathcal{L}_{\theta}^{\alpha}u_N\|_{\omega^{\sigma^{\ast},\sigma}}+  \|f\|_{\omega^{\sigma^{\ast},\sigma}} \|\mathcal{L}_{\theta}^{\alpha}u_N\|_{\omega^{\sigma^{\ast},\sigma}}\\
 		&\leq & |\mu|/\lambda_{\theta,0}^{\alpha}   \|\mathcal{L}_{\theta}^{\alpha}u_N\|^2_{\omega^{\sigma^{\ast},\sigma}}+  \|f\|_{\omega^{\sigma^{\ast},\sigma}} \|\mathcal{L}_{\theta}^{\alpha}u_N\|_{\omega^{\sigma^{\ast},\sigma}}\\
 		&\leq & 1/2 \|\mathcal{L}_{\theta}^{\alpha}u_N\|^2_{\omega^{\sigma^{\ast},\sigma}}+  \|f\|_{\omega^{\sigma^{\ast},\sigma}} \|\mathcal{L}_{\theta}^{\alpha}u_N\|_{\omega^{\sigma^{\ast},\sigma}},
 	\end{eqnarray*}
 	which leads to desired result directly.  
 \end{proof}

 \begin{theorem}[Convergence order]\label{thm:spectral-error-estimate-2}
 	Suppose that $u$ and $u_N$ satisfy  the problems \eqref{spectral-bilinear} and \eqref{PG-scheme}, respectively. If  $f\in  B^{r}_{ \omega^{\sigma^{\ast}-1,\sigma-1}}$ with $r\geq 0$ and  $|\mu|\leq \lambda_{\theta,0}^{\alpha}/2,$ then we have the following optimal error estimate:
 	\begin{eqnarray}
 	\|u-u_N\|_{\omega^{-\sigma,-\sigma^{\ast}}}\leq  cN^{-\gamma}| \omega ^{-\sigma,-\sigma^{\ast}}u|_{B^{\gamma}_{\omega^{\sigma,\sigma^{\ast}}}},\quad \gamma= (\alpha+1)\wedge r+ \alpha.
 	\end{eqnarray}
 \end{theorem}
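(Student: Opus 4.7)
The plan is to adopt the same split-and-stabilize strategy used in Theorem \ref{thm:spectral-error-estimate-1} but to exploit the Petrov--Galerkin pairing so that no factor of $N^\alpha$ is lost. I would set $\eta_N := u - \Pi_N^{\sigma,\sigma^\ast}u$ and $e_N := \Pi_N^{\sigma,\sigma^\ast}u - u_N \in U_N$, subtract \eqref{PG-scheme} from \eqref{spectral-bilinear}, and test against any $v_N\in V_N$ to arrive at the error equation
\begin{equation*}
(\mathcal{L}_\theta^\alpha e_N, v_N) + \mu(e_N, v_N) = -(\mathcal{L}_\theta^\alpha\eta_N, v_N) - \mu(\eta_N, v_N),\quad \forall\, v_N\in V_N.
\end{equation*}
The critical ingredient, which is specific to this particular Petrov--Galerkin pair, is that $(\mathcal{L}_\theta^\alpha\eta_N, v_N) = 0$ for every $v_N\in V_N$. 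This follows from the identity $\mathcal{L}_\theta^\alpha\eta_N = \sum_{n=N+1}^\infty \lambda_{\theta,n}^\alpha u_n P_n^{\sigma^\ast,\sigma}$ derived in the proof of Lemma \ref{lem:approximation-property}, the orthogonality of $\{P_n^{\sigma^\ast,\sigma}\}$ in $L^2_{\omega^{\sigma^\ast,\sigma}}$, and the identification $V_N = \omega^{\sigma^\ast,\sigma}\mathbb{P}_N$.

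With this cancellation the error equation reduces to $(\mathcal{L}_\theta^\alpha e_N, v_N) + \mu(e_N, v_N) = -\mu(\eta_N, v_N)$. Since $e_N\in U_N$, Lemma \ref{lem:frac-non-symm-jacobi} ensures $\mathcal{L}_\theta^\alpha e_N \in \mathbb{P}_N$, so the choice $v_N := \omega^{\sigma^\ast,\sigma}\mathcal{L}_\theta^\alpha e_N$ lies in $V_N$ and produces
\begin{equation*}
\|\mathcal{L}_\theta^\alpha e_N\|^2_{\omega^{\sigma^\ast,\sigma}} + \mu(e_N, \mathcal{L}_\theta^\alpha e_N)_{\omega^{\sigma^\ast,\sigma}} = -\mu(\eta_N, \mathcal{L}_\theta^\alpha e_N)_{\omega^{\sigma^\ast,\sigma}}.
\end{equation*}
Cauchy--Schwarz on both sides, combined with the bound $|\mu|\|w\|_{\omega^{\sigma^\ast,\sigma}} \leq (|\mu|/\lambda_{\theta,0}^\alpha)\|\mathcal{L}_\theta^\alpha w\|_{\omega^{\sigma^\ast,\sigma}}$ from Lemma \ref{sp-lem-emb} applied to $w = e_N$ and the smallness hypothesis $|\mu|\leq \lambda_{\theta,0}^\alpha/2$, lets me absorb the middle term into the left-hand side and obtain $\|\mathcal{L}_\theta^\alpha e_N\|_{\omega^{\sigma^\ast,\sigma}} \leq 2|\mu|\|\eta_N\|_{\omega^{\sigma^\ast,\sigma}}$. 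A second application of Lemma \ref{sp-lem-emb} converts this into $\|e_N\|_{\omega^{-\sigma,-\sigma^\ast}} \leq C\|\eta_N\|_{\omega^{\sigma^\ast,\sigma}} \leq C\|\eta_N\|_{\omega^{-\sigma,-\sigma^\ast}}$, the last inequality also coming from Lemma \ref{sp-lem-emb}.

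The conclusion then follows from the triangle inequality $\|u-u_N\|_{\omega^{-\sigma,-\sigma^\ast}}\leq \|\eta_N\|_{\omega^{-\sigma,-\sigma^\ast}} + \|e_N\|_{\omega^{-\sigma,-\sigma^\ast}}$, the projection estimate \eqref{error-weighted-L2}, and the regularity $\omega^{-\sigma,-\sigma^\ast}u \in B^\gamma_{\omega^{\sigma,\sigma^\ast}}$ with $\gamma = (\alpha+1)\wedge r + \alpha$ provided by Theorem \ref{thm:fode-regularity-two-side}. The main technical point is to orchestrate the smallness hypothesis together with the two uses of Lemma \ref{sp-lem-emb} in the correct weighted norms so that the $\mu$-term truly absorbs; it is precisely the Petrov--Galerkin orthogonality $(\mathcal{L}_\theta^\alpha\eta_N, v_N) = 0$ that kills the source of the $N^\alpha$ loss that appears in the Galerkin estimate of Theorem \ref{thm:spectral-error-estimate-1} when $\theta\neq 0.5$, thereby yielding the \emph{optimal} rate $N^{-\gamma}$.
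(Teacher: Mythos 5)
Your proposal is correct and takes essentially the same route as the paper's proof: the same decomposition $u-u_N=\eta_N+e_N$, the same test function $v_N=\omega^{\sigma^\ast,\sigma}\mathcal{L}_\theta^\alpha e_N\in V_N$, the same Petrov--Galerkin orthogonality annihilating $(\mathcal{L}_\theta^\alpha\eta_N,v_N)$, and the same two applications of Lemma \ref{sp-lem-emb} with the smallness hypothesis $|\mu|\leq\lambda_{\theta,0}^{\alpha}/2$ to absorb the reaction term, followed by the triangle inequality, Lemma \ref{lem:approximation-property}, and the regularity from Theorem \ref{thm:fode-regularity-two-side}. The only difference is that you spell out explicitly the orthogonality step that the paper dispatches with the phrase ``by the orthogonal property.''
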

 \begin{proof}
 	Denote $\eta_N=u-\Pi_N^{\sigma,\sigma^{\ast}}u$ and $e_N=\Pi_N^{\sigma,\sigma^{\ast}}u-u_N,$ then $u-u_N=\eta_N+e_N.$
 	Combining  \eqref{spectral-bilinear} and \eqref{PG-scheme}, we can obtain the  following error equation:
 	\begin{eqnarray*}
 		(\mathcal{L}_{\theta}^{\alpha}e_N,v_N)+\mu (e_N,v_N)=-  (\mathcal{L}_{\theta}^{\alpha}\eta_N,v_N)-\mu (\eta_N,v_N),\quad  \forall v_N\in U_N.
 	\end{eqnarray*}
 	Taking $v_N=\omega^{\sigma^{\ast},\sigma}\mathcal{L}_{\theta}^{\alpha}e_N,$ and by the orthogonal property, we get
 	\begin{eqnarray*}
 		\|\mathcal{L}_{\theta}^{\alpha}e_N\|^2_{\omega^{\sigma^{\ast},\sigma}} &=&-\mu (e_N,\omega^{\sigma^{\ast},\sigma}\mathcal{L}_{\theta}^{\alpha}e_N)-  (\mathcal{L}_{\theta}^{\alpha}\eta_N,\omega^{\sigma^{\ast},\sigma}\mathcal{L}_{\theta}^{\alpha}e_N)-\mu (\eta_N,\omega^{\sigma^{\ast},\sigma}\mathcal{L}_{\theta}^{\alpha}e_N) \nonumber \\
 		&=&-\mu (e_N,\omega^{\sigma^{\ast},\sigma}\mathcal{L}_{\theta}^{\alpha}e_N)- \mu (\eta_N,\omega^{\sigma^{\ast},\sigma}\mathcal{L}_{\theta}^{\alpha}e_N).	
 	\end{eqnarray*}
 	Following a similar derivation in the proof for stability,     we obtain 	 
 	$$ 	\|\mathcal{L}_{\theta}^{\alpha}e_N\|_{\omega^{\sigma^{\ast},\sigma}} \leq  2   \mu\|\eta_N\|_{\omega^{\sigma^{\ast},\sigma}}. $$ 
 	By Lemma \ref{sp-lem-emb}, we have   
 	$$\|e_N\|_{\omega^{-\sigma,-\sigma^{\ast}}} \leq  1/\lambda_{\theta,0}^{\alpha}  \|\mathcal{L}_{\theta}^{\alpha}e_N\|_{\omega^{\sigma^{\ast},\sigma}} \leq  2   \mu/\lambda_{\theta,0}^{\alpha}   \|\eta_N\|_{\omega^{\sigma^{\ast},\sigma}}  \leq    \|\eta_N\|_{\omega^{-\sigma,-\sigma^{\ast}}}. $$
 	Using the triangle inequality leads to 
 	$$	\|u-u_N\|_{\omega^{-\sigma,-\sigma^{\ast}}}\leq \|e_N\|_{\omega^{-\sigma,-\sigma^{\ast}}} +
 	\|\eta_N\|_{\omega^{-\sigma,-\sigma^{\ast}}}
 	\leq 2\|\eta_N\|_{\omega^{-\sigma,-\sigma^{\ast}}}. $$
 	Since $f\in  B^{r}_{ \omega^{\sigma^{\ast}-1,\sigma-1}}$  with $r\ge 0$, we can see from Theorem   \ref{thm:fode-regularity-two-side} that 	$\omega^{-\sigma,-\sigma^{\ast}}u \in   B^{ \gamma }_{\omega^{ \sigma,\sigma^{\ast} }}.$ Applying Lemma  \ref{lem:approximation-property}  leads to the desired result. 
 \end{proof}
 \begin{remark}
 	For the spectral Petrov-Galerkin   method, we need $|\mu|\leq \lambda_{\theta,0}^{\alpha}/2$. However, this assumption seems to be   relaxed to  the case for all $\mu>-\lambda_{\theta,0}^{\alpha}/2$. The key is to show that 
 	$(u_N,\omega^{\sigma^{\ast},\sigma}\mathcal{L}_{\theta}^{\alpha}u_N)$ is positive  for all $u_N\in U_N$. Unfortunately, we are not able to prove this for technical reasons. However,  when $\theta=0.5$, the spectral Petrov-Galerkin  method coincides with spectral Galerkin  method and we only need $\mu>-\lambda_{\theta,0}^{\alpha}/2$. 
 \end{remark}
 
 By Theorems \ref{thm:fode-regularity-flap-nonsymm} and \ref{thm:spectral-error-estimate-2},  it is straightforward to obtain the following result. 
  \begin{corollary}[Convergence order]\label{cor:spectral-error-estimate-2}
  	Suppose that $u$ and $u_N$ satisfy  the problems \eqref{spectral-bilinear} and \eqref{PG-scheme}, respectively. If  $f\in  B^{r}_{ \omega^{\sigma^{\ast},\sigma}}$, $u\in L^\infty$, and  $|\mu|\leq \lambda_{\theta,0}^{\alpha}/2,$ then we have the following optimal error estimates:
  	\begin{eqnarray}
  	\|u-u_N\|_{\omega^{-\sigma,-\sigma^{\ast}}}\leq  cN^{-\gamma}| \omega ^{-\sigma,-\sigma^{\ast}}u|_{B^{\gamma}_{\omega^{\sigma,\sigma^{\ast}}}},\quad \gamma= \alpha\wedge r+ \alpha.
  	\end{eqnarray}
  \end{corollary}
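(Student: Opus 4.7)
The plan is to follow exactly the Petrov-Galerkin error argument of Theorem \ref{thm:spectral-error-estimate-2}, observing that the part of that proof which reduces the error to a projection error, namely
\[
\|u-u_N\|_{\omega^{-\sigma,-\sigma^{\ast}}}\le 2\,\|\eta_N\|_{\omega^{-\sigma,-\sigma^{\ast}}},\qquad \eta_N=u-\Pi_N^{\sigma,\sigma^{\ast}}u,
\]
is purely a stability/orthogonality statement about the discrete scheme \eqref{PG-scheme} and makes no use of any particular weighted regularity of $u$. The only place where regularity enters is in the final step that invokes Theorem \ref{thm:fode-regularity-two-side} to estimate $\|\eta_N\|_{\omega^{-\sigma,-\sigma^{\ast}}}$ via Lemma \ref{lem:approximation-property}. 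So the corollary follows by swapping that appeal with Theorem \ref{thm:fode-regularity-flap-nonsymm}, which is the one tailored to the hypothesis $f\in B^{r}_{\omega^{\sigma^{\ast},\sigma}}$ and $u\in L^{\infty}$ rather than to the weighted class $B^{r}_{\omega^{\sigma^{\ast}-1,\sigma-1}}$.

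Concretely, first I would set $\eta_N=u-\Pi_N^{\sigma,\sigma^{\ast}}u$ and $e_N=\Pi_N^{\sigma,\sigma^{\ast}}u-u_N\in U_N$, subtract \eqref{PG-scheme} from \eqref{spectral-bilinear} tested against any $v_N\in V_N$, and obtain the error equation
\[
(\mathcal{L}_{\theta}^{\alpha}e_N,v_N)+\mu(e_N,v_N)=-(\mathcal{L}_{\theta}^{\alpha}\eta_N,v_N)-\mu(\eta_N,v_N).
\]
Test with $v_N=\omega^{\sigma^{\ast},\sigma}\mathcal{L}_{\theta}^{\alpha}e_N\in V_N$ and use that $\mathcal{L}_{\theta}^{\alpha}\eta_N$ is $L^{2}_{\omega^{\sigma^{\ast},\sigma}}$-orthogonal to $\mathbb{P}_N$ (by Lemma \ref{lem:frac-non-symm-jacobi} and the definition of $\Pi_N^{\sigma,\sigma^{\ast}}$) to kill the $(\mathcal{L}_{\theta}^{\alpha}\eta_N,v_N)$ term. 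Then the same Cauchy-Schwarz and Lemma \ref{sp-lem-emb} argument used in the proof of Theorem \ref{thm:spectral-error-estimate-2}, together with the hypothesis $|\mu|\le\lambda_{\theta,0}^{\alpha}/2$, yields $\|\mathcal{L}_{\theta}^{\alpha}e_N\|_{\omega^{\sigma^{\ast},\sigma}}\le 2|\mu|\|\eta_N\|_{\omega^{\sigma^{\ast},\sigma}}$ and, after applying Lemma \ref{sp-lem-emb} once more, $\|e_N\|_{\omega^{-\sigma,-\sigma^{\ast}}}\le\|\eta_N\|_{\omega^{-\sigma,-\sigma^{\ast}}}$.

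The triangle inequality then gives $\|u-u_N\|_{\omega^{-\sigma,-\sigma^{\ast}}}\le 2\|\eta_N\|_{\omega^{-\sigma,-\sigma^{\ast}}}$. At this stage I would invoke Theorem \ref{thm:fode-regularity-flap-nonsymm}: since $f\in B^{r}_{\omega^{\sigma^{\ast},\sigma}}$ and $u\in L^{\infty}$, it provides $\omega^{-\sigma,-\sigma^{\ast}}u\in B^{\gamma}_{\omega^{\sigma,\sigma^{\ast}}}$ with $\gamma=\alpha\wedge r+\alpha$. Plugging this regularity into the projection estimate \eqref{error-weighted-L2} of Lemma \ref{lem:approximation-property} bounds $\|\eta_N\|_{\omega^{-\sigma,-\sigma^{\ast}}}$ by $cN^{-\gamma}|\omega^{-\sigma,-\sigma^{\ast}}u|_{B^{\gamma}_{\omega^{\sigma,\sigma^{\ast}}}}$, which completes the proof.

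I do not anticipate any genuine obstacle: every ingredient is already established above, and the argument is a carbon copy of Theorem \ref{thm:spectral-error-estimate-2} with the only substitution being the regularity source. The one place to be a little careful is to confirm that the orthogonal-projection identity that makes $(\mathcal{L}_{\theta}^{\alpha}\eta_N,\omega^{\sigma^{\ast},\sigma}\mathcal{L}_{\theta}^{\alpha}e_N)=0$ is valid under the weaker regularity $\omega^{-\sigma,-\sigma^{\ast}}u\in B^{\gamma}_{\omega^{\sigma,\sigma^{\ast}}}$ coming from Theorem \ref{thm:fode-regularity-flap-nonsymm}; this is automatic because $\mathcal{L}_{\theta}^{\alpha}e_N\in \mathbb{P}_N$ and the definition of $\Pi_N^{\sigma,\sigma^{\ast}}$ yields the orthogonality directly from Lemma \ref{lem:frac-non-symm-jacobi}, which is exactly the calculation at the end of the proof of Lemma \ref{lem:approximation-property}.
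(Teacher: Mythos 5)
Your proposal is correct and follows exactly the paper's route: the paper obtains this corollary by combining the Petrov--Galerkin error reduction of Theorem \ref{thm:spectral-error-estimate-2} (which, as you note, is independent of which regularity class $u$ lies in) with the regularity statement of Theorem \ref{thm:fode-regularity-flap-nonsymm} in place of Theorem \ref{thm:fode-regularity-two-side}. The only difference is that you re-derive the bound $\|u-u_N\|_{\omega^{-\sigma,-\sigma^{\ast}}}\le 2\|\eta_N\|_{\omega^{-\sigma,-\sigma^{\ast}}}$ in full rather than citing it, which changes nothing of substance.
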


 

 \section{Numerical results} \label{sec:numerical example}
 In this section, we present three examples with  different   forcing terms $f$: 
 smooth, weakly singular at an interior point and  weakly singular at boundary. Since    exact solutions  are unavailable, we use    reference solutions $u_{\rm  ref}$, which are computed  with a very fine resolution using the same methods for computing $u_N$. 
In the  computation, we take $\mu=1$ and measure the error as follows
 \begin{eqnarray} \label{eq:error-1}
 &&E_1(N)=\|u_{\rm ref}-u_N\|_{\omega^{-2\sigma,-2\sigma^{\ast}}},\quad  E_2(N)=\|u_{\rm ref}-u_N\|_{\omega^{-\sigma,-\sigma^{\ast}}} 
 \end{eqnarray}
 where
 $ u_N=  \sum_{n=0}^{N}\hat{u}_n\omega^{\sigma,\sigma^{\ast}}P_n^{\sigma,\sigma^{\ast}}$ and $u_{\rm ref}=: u_{512}.$
   
 For the first two examples without boundary singularity, we use  $E_1(N)$ to measure the error. As $E_2(N)\leq E_{1}(N)$, the convergence order of $E_{1}(N)$ is at  least the order of $E_{2}(N)$. For the third example, we use $E_2(N)$  to measure the error.
  
Here we present numerical results for $\theta \in [0.5,1]$, in particular,    $\theta=0.5,\,0.7$, and    $\theta=1$.  Since $\sigma$ and $\sigma^{\ast}$ depends on the fractional order $\alpha$ and $\theta$, we find the values of  $\sigma,\,\sigma^{\ast}$ numerically using Newton's method with a tolerance $10^{-16}.$ 
 We list in  Table \ref{spectral-sigma-pair} the values of
 $(\sigma,\sigma^{\ast})$  for  different $\theta$'s and
 $\alpha$'s.  For illustration, we present only four digits in
 the table while in computation we keep fifteen digits for $\sigma$ and $\sigma^*$.

 \begin{table}[!ht]
 	\centering
 	\caption{  Numerical values for  $(\sigma, \sigma^{\ast})$ corresponding to different $\theta$ and $\alpha$  }\label{spectral-sigma-pair}
 	\begin{tabular}{ccccc}
 		\hline\hline
 		& $\alpha=1.2$& $\alpha=1.4$  & $\alpha=1.6$& $\alpha=1.8$ \\
 		
 		$\theta=0.5$ & (0.6000, 0.6000) & (0.7000, 0.7000)  & (0.8000, 0.8000) & (0.9000, 0.9000) \\
 		$\theta=0.7$ & (0.8829, 0.3171) & (0.8602, 0.5398)  & (0.8900, 0.7100) & (0.9411, 0.8589) \\
 		$\theta=1.0$ & (1.0000, 0.2000) & (1.0000, 0.4000)  & (1.0000, 0.6000) & (1.0000, 0.8000) \\
 		\hline
 		\hline
 	\end{tabular}
 \end{table}
 
  For spectral Petrov-Galerkin  method, we present   numerical results for 
  all $\theta$'s listed above while we present only numerical results of the Galerkin method for  $\theta\neq 0.5$ since the method is the same as the spectral Petrov-Galerkin method  when $\theta=0.5$.

 \begin{example}\label{spectral-ex-1}
 	Consider 	$f= \sin x.$  Here  $f$   belongs to $ B^{\infty}_{\omega^{\sigma^{\ast}-1,\sigma-1}}.$
 	By Theorem \ref{thm:fode-regularity-two-side},   	$\omega^{-\sigma,-\sigma^{\ast}}u \in   B^{ 2\alpha+1 }_{\omega^{ \sigma,\sigma^{\ast} }}$.
 \end{example}
 According to Theorem \ref{thm:spectral-error-estimate-2}, 
 the convergence orders are expected to be $2\alpha+1$ for the spectral Petrov-Galerkin   method \eqref{PG-scheme}. 
    In Table     \ref{table:spectral-PG-exam-1},  we observe that the convergence orders are $2\alpha+1$ for the spectral Petrov-Galerkin method   when  the order $\alpha=1.2,\,1.4,\,1.6,\,1.8$.

    In the light of  Theorem   \ref{thm:spectral-error-estimate-1}, 
    the convergence orders are expected to be  $\alpha+1$ for the spectral Galerkin  method \eqref{weak-form-scheme_3}. However, we found  numerically the convergence orders lie in between $\alpha+1$ and $2\alpha+1$; see Table \ref{table:spectral-G-exam-1}. 
   For $\theta=0.7$ in  Table \ref{table:spectral-G-exam-1}, the observed convergence orders are $2\alpha+1$ when   $\alpha=1.4,\,1.6,\,1.8,$ instead of $\alpha+1$.
     When $\alpha=1.2,$ the order is not close to $2\alpha+1$ but still  larger than   $\alpha+1$. 
      For  $\theta=1$, the convergence orders again lie in between  $\alpha+1$ and $2\alpha+1$ except for   $\alpha=1.2$.  For $\alpha=1.2,$ the convergence order is around 2 and is smaller than the expected $\alpha+1=2.2$. %
    However, the   order $2.2$ can be observed for $\alpha=1.2$ if we take  $u_{\rm ref}=u_{256}$ instead of $u_{\rm ref}=u_{512}.$ More precisely, the errors   are  $3.56e-04$ ($N=16$),  $9.56e-05$ ($N=32$), $2.44e-05$ ($N=64$),  $5.36e-06$ ($N=128$) and the computed orders are 1.90, 1.97, 2.19. 
  We also observe that when $\alpha=1.8$, the convergence order can match the regularity  index $2\alpha+1.$  
 
 In this example, the spectral Petrov-Galerkin  method \eqref{PG-scheme} has  
 the convergence order $2\alpha+1$, which suggests  that the regularity index $2\alpha+1$ for the solution and somewhat verifies Theorem \ref{thm:fode-regularity-two-side}.  The spectral Petrov-Galerkin  method  has higher accuracy  than the  spectral Galerkin   method  \eqref{weak-form-scheme_3} does, especially for small fractional order $\alpha$ when $\theta \neq 0.5$.  An improvement in the convergence order for the spectral Galerkin method will be considered for further research.
 
  \begin{table}[!htb]
  	\centering
  	\caption{  Convergence orders and errors  of the spectral Petrov-Galerkin  method \eqref{PG-scheme} for Example \ref{spectral-ex-1} with  $f= \sin x.$  }\label{table:spectral-PG-exam-1}
  	\begin{tabular}{cccccccccc}
  		\hline\hline
  		&	& \multicolumn{2}{c}{ $\alpha=1.2$}&   \multicolumn{2}{c}{ $\alpha=1.4$}  &\multicolumn{2}{c}{ $\alpha=1.6$}&\multicolumn{2}{c}{ $\alpha=1.8$} \\
  		\cline{3-4} \cline{5-6} \cline{7-8}\cline{9-10}
  		&	$N$ &  $E_1(N) $ & rate &  $E_1(N) $ & rate  &$E_1(N) $ & rate & $E_1(N) $ & rate  \\
  		\hline
  		$\theta=0.5$ & 16       & 1.29e-04     &              & 1.11e-05     &              & 1.43e-06     &          & 1.78e-07     &          \\ 
  		& 32       & 1.31e-05     &     3.31     & 8.81e-07     &     3.65     & 8.82e-08     &     4.01     & 8.54e-09     &     4.38    \\ 
  		& 64       & 1.29e-06     &     3.34     & 6.71e-08     &     3.71     & 5.17e-09     &     4.09     & 3.85e-10     &     4.47    \\ 
  		& 128      & 1.25e-07     &     3.37     & 4.97e-09     &     3.75     & 2.92e-10     &     4.14     & 1.66e-11     &     4.53    \\ 
  		\hline
  		Order  &(Averaged)&& 3.34 && 3.71 && 4.08 && 4.46\\
  		\hline
  		$2\alpha+1$  & (Theorem \ref{thm:spectral-error-estimate-2}) && 3.40 && 3.80 && 4.20 && 4.60\\ 	\hline \hline
  		$\theta=0.7$ &  16       & 3.84e-05     &          & 7.39e-06     &          & 1.24e-06     &          & 1.75e-07     &          \\ 
  		&  32       & 3.90e-06     &     3.30     & 5.82e-07     &     3.67     & 7.68e-08     &     4.02     & 8.46e-09     &     4.37    \\ 
  		&  64       & 3.80e-07     &     3.36     & 4.40e-08     &     3.73     & 4.50e-09     &     4.09     & 3.83e-10     &     4.47    \\ 
  		&  128       & 3.60e-08     &     3.40     & 3.23e-09     &     3.77     & 2.54e-10     &     4.15     & 1.66e-11     &     4.53    \\ 
  		\hline
  		Order  &(Averaged) && 3.35 && 3.72 && 4.07 && 4.46\\
  		\hline
  		$2\alpha+1$  & (Theorem \ref{thm:spectral-error-estimate-2})  && 3.40 && 3.80 && 4.20 && 4.60\\ 	\hline
  		\hline
  		$\theta=1$	 & 	16       & 3.87e-06     &          & 1.99e-06     &          & 7.04e-07     &          & 1.63e-07     &          \\ 
  		& 	32       & 3.94e-07     &     3.29     & 1.56e-07     &     3.67     & 4.42e-08     &     4.00     & 8.14e-09     &     4.32    \\ 
  		& 	64       & 3.80e-08     &     3.38     & 1.17e-08     &     3.74     & 2.61e-09     &     4.08     & 3.75e-10     &     4.44    \\ 
  		& 	128       & 3.55e-09     &     3.42     & 8.56e-10     &     3.78     & 1.49e-10     &     4.13     & 1.65e-11     &     4.51    \\ 
  		\hline
  		Order  & (Averaged)&& 3.36 && 3.73 && 4.07 && 4.42\\ \hline
  		$2\alpha+1$  & (Theorem \ref{thm:spectral-error-estimate-2}) && 3.40 && 3.80 && 4.20 && 4.60\\
  		\hline
  		\hline
  	\end{tabular}
  \end{table}

  \begin{table}[!htb]
  	\centering
  	\caption{  Convergence orders and errors  of the spectral Galerkin  method \eqref{weak-form-scheme_3} for Example \ref{spectral-ex-1}  with  $f= \sin x.$   }\label{table:spectral-G-exam-1}
  	\begin{tabular}{cccccccccc}
  		\hline\hline
  		&	& \multicolumn{2}{c}{ $\alpha=1.2$}&   \multicolumn{2}{c}{ $\alpha=1.4$}  &\multicolumn{2}{c}{ $\alpha=1.6$}&\multicolumn{2}{c}{ $\alpha=1.8$} \\
  		\cline{3-4} \cline{5-6} \cline{7-8}\cline{9-10}
  		&	$N$ &  $E_1(N) $ & rate &  $E_1(N) $ & rate  &$E_1(N) $ & rate & $E_1(N) $ & rate  \\
  		\hline
  		$\theta=0.7$ & 16       & 3.25e-04     &          & 2.22e-05     &          & 3.09e-06     &          & 5.04e-07     &          \\ 
  		&32       & 5.41e-05     &     2.59     & 1.83e-06     &     3.60     & 1.86e-07     &     4.05     & 2.42e-08     &     4.38    \\ 
  		& 64       & 8.66e-06     &     2.64     & 1.41e-07     &     3.70     & 1.06e-08     &     4.13     & 1.08e-09     &     4.48    \\ 
  		&128       & 1.35e-06     &     2.69     & 1.04e-08     &     3.76     & 5.84e-10     &     4.18     & 4.67e-11     &     4.54    \\ 
  		\hline
  		Order  &(Averaged)&& 2.64 && 3.69 && 4.12 && 4.47\\ 	\hline 	
  		$2\alpha+1$  &  && 3.40 && 3.80 && 4.20 && 4.60\\	\hline
  		$\alpha+1$  & (Theorem \ref{thm:spectral-error-estimate-1}) && 2.20 && 2.40 && 2.60 && 2.80\\\hline
  		\hline 	
  		$\theta=1$	 & 16       & 3.45e-04     &          & 4.77e-05     &          & 4.77e-06     &          & 5.21e-07     &          \\ 
  		& 32       & 9.45e-05     &     1.87     & 6.76e-06     &     2.82     & 3.51e-07     &     3.76     & 2.47e-08     &     4.40    \\ 
  		& 64       & 2.48e-05     &     1.93     & 9.07e-07     &     2.90     & 2.40e-08     &     3.87     & 1.09e-09     &     4.50    \\ 
  		& 128       & 6.23e-06     &     1.99     & 1.17e-07     &     2.95     & 1.57e-09     &     3.93     & 4.63e-11     &     4.56    \\ 
  		\hline
  		Order  &(Averaged)&& 1.93 && 2.89 && 3.85 && 4.47\\
  		\hline
  		$2\alpha+1$  &  && 3.40 && 3.80 && 4.20 && 4.60\\	\hline
  		$\alpha+1$  & (Theorem \ref{thm:spectral-error-estimate-1}) && 2.20 && 2.40 && 2.60 && 2.80\\
  		\hline
  		\hline
  	\end{tabular}
  \end{table}

 \begin{example}\label{spectral-ex-2}
 	Consider	$f= \abs{\sin x}.$ The function $f$ has a weak singularity  at $x=0$ and    $f\in  B^{1.5-\epsilon}_{\omega^{\sigma^{\ast}-1,\sigma-1}}$ for any   $\epsilon >0.$ By Theorem \ref{thm:fode-regularity-two-side},   	$\omega^{-\sigma,-\sigma^{\ast}}u \in   B^{ \alpha+1.5-\epsilon }_{\omega^{ \sigma,\sigma^{\ast} }}$. 
 \end{example}
 
  According to Theorems  \ref{thm:spectral-error-estimate-2} and   \ref{thm:spectral-error-estimate-1}, 
  the convergence orders are expected to be $\alpha+1.5-\epsilon$ for the spectral Petrov-Galerkin   method \eqref{PG-scheme} and $1.5-\epsilon$ for the spectral Galerkin  method \eqref{weak-form-scheme_3}.

 From Table \ref{table:spectral-PG-exam-2}, we can observe that the convergence order is $\alpha+1.5-\epsilon$ for the spectral Petrov-Galerkin   method \eqref{PG-scheme}, which is in agreement with the theoretical prediction 
   when  the order $\alpha=1.2,\,1.4,\,1.6,\,1.8$.  
   
    In Table   \ref{table:spectral-G-exam-2}, we observe that the convergence orders for the spectral Galerkin method lie in between $\alpha+1.5-\epsilon$ and 
    $1.5 -\epsilon$.  
    For  
    $\theta=0.7$,    the observed convergence order is   $\alpha+1.5-\epsilon$ when  $\alpha=1.8.$  However, we observed that the convergence orders decrease with $\alpha$
    when  $\theta=0.7$ and $\theta=1$. 
      
In this example,  the spectral Petrov-Galerkin  method \eqref{PG-scheme} has  
 the convergence order $\alpha+1.5-\epsilon$, which  suggests the regularity index $\alpha+1.5-\epsilon$ for the solution and    verifies  Theorem \ref{thm:fode-regularity-two-side}.  The spectral Petrov-Galerkin  method  has higher accuracy  than the  spectral Galerkin   method  \eqref{weak-form-scheme_3}, especially for small fractional order $\alpha$ when $\theta \neq 0.5$.  Again better  error estimates for the spectral Galerkin method should be considered for further research.
 
 \begin{table}[!htb]
 	\centering
 	\caption{  Convergence orders  and errors of the  spectral Petrov-Galerkin method  \eqref{PG-scheme} for Example \ref{spectral-ex-2}  with  $f= |\sin x|.$     }\label{table:spectral-PG-exam-2} 
 	\begin{tabular}{cccccccccc}
 		\hline\hline
 		&	& \multicolumn{2}{c}{ $\alpha=1.2$}&   \multicolumn{2}{c}{ $\alpha=1.4$}  &\multicolumn{2}{c}{ $\alpha=1.6$}&\multicolumn{2}{c}{ $\alpha=1.8$} \\
 		\cline{3-4} \cline{5-6} \cline{7-8}\cline{9-10}
 		&	$N$ &  $E_1(N) $ & rate &  $E_1(N) $ & rate  &$E_1(N) $ & rate & $E_1(N) $ & rate  \\
 		
 		\hline
 		$\theta=0.5$ & 16       & 1.60e-03     &          & 4.88e-04     &          & 2.12e-04     &          & 1.11e-04     &          \\ 
 		& 32       & 2.70e-04     &     2.56     & 7.32e-05     &     2.74     & 2.85e-05     &     2.89     & 1.36e-05     &     3.03    \\ 
 		& 64       & 4.28e-05     &     2.66     & 1.01e-05     &     2.85     & 3.48e-06     &     3.03     & 1.49e-06     &     3.19    \\ 
 		& 128       & 6.61e-06     &     2.70     & 1.35e-06     &     2.91     & 4.05e-07     &     3.10     & 1.54e-07     &     3.27    \\ 
 		\hline
 		Order  &(Averaged) && 2.64 && 2.83 && 3.01 && 3.16\\
 		\hline 
 		$\alpha+1.5-\epsilon$  &(Theorem \ref{thm:spectral-error-estimate-2}) && 2.70 && 2.90 && 3.10 && 3.30\\
 		\hline\hline
 		$\theta=0.7$ & 16       & 1.43e-03     &          & 5.05e-04     &          & 2.19e-04     &          & 1.12e-04     &          \\ 
 		& 32       & 2.50e-04     &     2.51     & 7.73e-05     &     2.71     & 2.98e-05     &     2.87     & 1.38e-05     &     3.02    \\ 
 		& 64       & 4.00e-05     &     2.64     & 1.08e-05     &     2.84     & 3.69e-06     &     3.02     & 1.52e-06     &     3.18    \\ 
 		& 128       & 6.11e-06     &     2.71     & 1.44e-06     &     2.90     & 4.33e-07     &     3.09     & 1.58e-07     &     3.26    \\ 
 		\hline
 		Order  &(Averaged)&& 2.62 && 2.82 && 2.99 && 4.15\\
 		\hline 
 		$\alpha+1.5-\epsilon$  &(Theorem \ref{thm:spectral-error-estimate-2})&& 2.70 && 2.90 && 3.10 && 3.30\\
 		\hline \hline
 		$\theta=1$& 16       & 9.36e-04     &          & 4.69e-04     &          & 2.32e-04     &          & 1.17e-04     &          \\ 
 		& 32       & 1.69e-04     &     2.47     & 7.52e-05     &     2.64     & 3.30e-05     &     2.81     & 1.47e-05     &     2.99    \\ 
 		& 64       & 2.81e-05     &     2.59     & 1.10e-05     &     2.77     & 4.25e-06     &     2.95     & 1.66e-06     &     3.15    \\ 
 		& 128       & 4.49e-06     &     2.65     & 1.54e-06     &     2.84     & 5.20e-07     &     3.03     & 1.77e-07     &     3.23    \\ 
 		\hline
 		Order  &(Averaged) && 2.57 && 2.75 && 2.93 && 3.12\\
 		\hline 
 		$\alpha+1.5-\epsilon$  &(Theorem \ref{thm:spectral-error-estimate-2})&& 2.70 && 2.90 && 3.10 && 3.30\\
 		\hline
 		\hline
 	\end{tabular}
 \end{table}

 \begin{table}[!htb]
 	\centering
 	\caption{  Convergence orders and errors  of the spectral Galerkin  method \eqref{weak-form-scheme_3} for Example \ref{spectral-ex-2} with  $f= |\sin x|.$   }\label{table:spectral-G-exam-2}
 	\begin{tabular}{cccccccccc}
 		\hline\hline
 		&	& \multicolumn{2}{c}{ $\alpha=1.2$}&   \multicolumn{2}{c}{ $\alpha=1.4$}  &\multicolumn{2}{c}{ $\alpha=1.6$}&\multicolumn{2}{c}{ $\alpha=1.8$} \\
 		\cline{3-4} \cline{5-6} \cline{7-8}\cline{9-10}
 		&	$N$ &  $E_1(N) $ & rate &  $E_1(N) $ & rate  &$E_1(N) $ & rate & $E_1(N) $ & rate  \\
 		\hline
 		$\theta=0.7$ & 16       & 1.73e-03     &          & 7.77e-04     &          & 3.53e-04     &          & 1.96e-04     &          \\ 
 		&32       & 4.47e-04     &     1.96     & 1.40e-04     &     2.48     & 5.18e-05     &     2.77     & 2.47e-05     &     2.99    \\ 
 		& 64       & 1.06e-04     &     2.08     & 2.29e-05     &     2.61     & 6.90e-06     &     2.91     & 2.80e-06     &     3.14    \\ 
 		& 128       & 2.36e-05     &     2.17     & 3.56e-06     &     2.68     & 8.73e-07     &     2.98     & 3.00e-07     &     3.22    \\ 
 		\hline
 		Order  &(Averaged)&& 2.07 && 2.57 && 2.89 && 3.12\\  \hline
 		
 		$\alpha+1.5-\epsilon$  &&& 2.70 && 2.90 && 3.10 && 3.30\\ 	\hline 
 		$1.5-\epsilon$  & (Theorem \ref{thm:spectral-error-estimate-1})&& 1.50 && 1.50 && 1.50 && 1.50\\
 		\hline \hline
 		$\theta=1$	&16       & 7.04e-04     &          & 7.44e-04     &          & 4.28e-04     &          & 2.18e-04     &          \\ 
 		& 32       & 1.82e-04     &     1.95     & 1.79e-04     &     2.05     & 7.78e-05     &     2.46     & 3.03e-05     &     2.84    \\ 
 		&64       & 5.12e-05     &     1.83     & 4.01e-05     &     2.16     & 1.30e-05     &     2.58     & 3.84e-06     &     2.98    \\ 
 		& 128       & 1.44e-05     &     1.83     & 8.46e-06     &     2.24     & 2.07e-06     &     2.65     & 4.62e-07     &     3.05    \\ 
 		\hline
 		Order  &(Averaged)&& 1.87 && 2.15 && 2.56 && 2.96\\ 
 		\hline 
 		$\alpha+1.5-\epsilon$  &&& 2.70 && 2.90 && 3.10 && 3.30\\  \hline
 		$1.5-\epsilon$  & (Theorem \ref{thm:spectral-error-estimate-1})&& 1.50 && 1.50 && 1.50 && 1.50\\
 		\hline
 		\hline
 	\end{tabular}
 \end{table}

 \begin{example} \label{spectral-ex-3}
 	Consider	 $f= (1-x^2)^\beta   \sin x$. Here $f \in B^{\sigma\wedge \sigma^{\ast}+2\beta}_{\omega^{\sigma^{\ast}-1,\sigma-1}}\cap \in B^{\sigma\wedge \sigma^{\ast}+2\beta+1}_{\omega^{\sigma^{\ast},\sigma}}$. From   Theorem \ref{thm:fode-regularity-two-side},   	$\omega^{-\sigma,-\sigma^{\ast}}u \in   B^{ \alpha+(\sigma\wedge {\sigma^{\ast}}+2\beta )\wedge (\alpha+1)}_{\omega^{ \sigma,\sigma^{\ast} }}$ and by Theorem \ref{thm:fode-regularity-flap-nonsymm}, $\omega^{-\sigma,-\sigma^{\ast}}u \in   B^{ \alpha+(\sigma\wedge {\sigma^{\ast}}+2\beta+1)\wedge \alpha}_{\omega^{ \sigma,\sigma^{\ast} }}.$   
 \end{example}
 In this example,  we measure the error using $E_2(N)$ instead
 of $E_1(N).$  We test the different $\beta$'s  in Tables
 \ref{table:spectral-PG-exam-3-case-1}-\ref{table:spectral-G-exam-3-case-1}
 ($\beta =0.5$)   and in Tables
 \ref{table:spectral-PG-exam-3-case-3}-\ref{table:spectral-G-exam-3-case-3}
 ($\beta = -0.4$). 
 
We first test the case $\beta=0.5$ where  the forcing term $f$ has weak singularity and vanishes at both end points $\pm 1$.
By Theorems \ref{thm:spectral-error-estimate-2} and  \ref{thm:spectral-error-estimate-1} the theoretical  orders for the Petrov-Galerkin method and the Galerkin method  are  $\alpha+\sigma\wedge {\sigma^{\ast}}+1$ and $\sigma\wedge {\sigma^{\ast}}+1$.  However, 
we observe in  Tables
\ref{table:spectral-PG-exam-3-case-1}-\ref{table:spectral-G-exam-3-case-1}
that both methods can have  convergence orders as high as   $\alpha+\sigma\wedge {\sigma^{\ast}}+2$.

We then consider the  singular forcing term  $f=(1-x^2)^\beta \sin x$ where   $\beta=-0.4$. In this case, we apply Theorem \ref{thm:fode-regularity-flap-nonsymm} instead of Theorem \ref{thm:fode-regularity-two-side} in order to get higher regularity index. 
From   Theorem \ref{thm:fode-regularity-two-side},   	$\omega^{-\sigma,-\sigma^{\ast}}u \in   B^{ \alpha+(\sigma\wedge {\sigma^{\ast}}+2\beta )\wedge (\alpha+1)}_{\omega^{ \sigma,\sigma^{\ast} }}$ and by Theorem \ref{thm:fode-regularity-flap-nonsymm}, $\omega^{-\sigma,-\sigma^{\ast}}u \in   B^{ \alpha+(\sigma\wedge {\sigma^{\ast}}+2\beta+1)\wedge \alpha}_{\omega^{ \sigma,\sigma^{\ast} }}.$  For $\beta =-0.4$, $\omega^{-\sigma,-\sigma^{\ast}}u \in   B^{ \alpha+(\sigma\wedge {\sigma^{\ast}}+0.2)\wedge \alpha}_{\omega^{ \sigma,\sigma^{\ast} }}$.
According to Corollary \ref{cor:spectral-error-estimate-2}  the 
convergence  orders for the spectral Petrov-Galerkin method   are  $\alpha+\sigma\wedge {\sigma^{\ast}}+0.2$, which is  demonstrated in  Table \ref{table:spectral-PG-exam-3-case-3}.
From Corollary  \ref{cor:spectral-error-estimate-1}, the 
convergence  orders for the spectral Galerkin method  are expected to be $\sigma\wedge {\sigma^{\ast}}+0.2$.   However, the observed orders are  $\alpha+\sigma\wedge {\sigma^{\ast}}+0.2$ in Table \ref{table:spectral-G-exam-3-case-3}.

 In this example, the convergence orders for the two methods are almost the same 
 when  the forcing term $f(x)$ has both weak boundary
 singularity ($\beta=0.5$) or stronger boundary singularity ($\beta=-0.4$), which  suggest the error estimate for  the  Galerkin method can be improved in the non-symmetrical case $\theta\neq 0.5$.    For the Petrov-Galerkin method, however, the convergence orders are higher than the theoretical predictions in the case of $\beta=0.5$.

 \begin{table}[!htb]
 	\centering
 	\caption{  Convergence orders and errors  of the spectral  Petrov-Galerkin   method \eqref{PG-scheme} for Example \ref{spectral-ex-3} with $f= (1-x^2)^{0.5}\sin x.$    }\label{table:spectral-PG-exam-3-case-1}
 \begin{tabular}{cccccccccc}
 	\hline\hline
 	&	& \multicolumn{2}{c}{ $\alpha=1.2$}&   \multicolumn{2}{c}{ $\alpha=1.4$}  &\multicolumn{2}{c}{ $\alpha=1.6$}&\multicolumn{2}{c}{ $\alpha=1.8$} \\
 	\cline{3-4} \cline{5-6} \cline{7-8}\cline{9-10}
 	&	$N$ &  $E_2(N) $ & rate &  $E_2(N) $ & rate  &$E_2(N) $ & rate & $E_2(N) $ & rate  \\
 \hline
 $\theta=0.5$  &16        & 4.00e-05     &          & 1.13e-05     &          & 3.95e-06     &          & 1.57e-06     &          \\ 
 & 32         & 3.32e-06     &     3.59     & 7.73e-07     &     3.87     & 2.23e-07     &     4.15     & 7.32e-08     &     4.42    \\ 
 & 64     & 2.61e-07     &     3.68     & 4.93e-08     &     3.99     & 1.16e-08     &     4.29     & 3.12e-09     &     4.58    \\ 
 & 128        & 1.99e-08     &     3.71     & 3.03e-09     &     4.03     & 5.76e-10     &     4.33     & 1.26e-10     &     4.63    \\ 
 \hline
 Order  &(Averaged)&& 3.66 && 3.96 && 4.26 && 4.54\\
 \hline
 $\alpha+\sigma\wedge \sigma^{\ast}+2$  &&& 3.80 && 4.10 && 4.40 && 4.70\\
 \hline
 $\alpha+\sigma\wedge \sigma^{\ast}+1$  &(Theorem \ref{thm:spectral-error-estimate-2})&& 2.80 && 3.10 && 3.40 && 3.70\\
 	\hline \hline
 	$\theta=0.7$&16        & 4.49e-05     &          & 1.16e-05     &          & 3.96e-06     &          & 1.57e-06     &          \\ 
 	& 32      & 4.46e-06     &     3.33     & 8.57e-07     &     3.76     & 2.29e-07     &     4.11     & 7.36e-08     &     4.41    \\ 
 	&64       & 4.20e-07     &     3.42     & 5.98e-08     &     3.86     & 1.23e-08     &     4.24     & 3.16e-09     &     4.57    \\ 
 	& 128       & 3.83e-08     &     3.45     & 4.03e-09     &     3.89     & 6.38e-10     &     4.27     & 1.29e-10     &     4.61    \\ 
 	\hline
 	Order  &(Averaged)&& 3.40 && 3.82 && 4.21 && 4.53\\
 		\hline
 		$\alpha+\sigma\wedge \sigma^{\ast}+2$  &&& 3.52 && 3.94 && 4.31 && 4.66\\
 	\hline
 	$\alpha+\sigma\wedge \sigma^{\ast}+1$  &(Theorem \ref{thm:spectral-error-estimate-2})&& 2.52 && 2.94 && 3.31 && 3.66\\
 	\hline \hline
 	$\theta=1$  &16       & 3.04e-05     &          & 1.05e-05     &          & 3.88e-06     &          & 1.56e-06     &          \\ 
 	& 32       & 3.28e-06     &     3.22     & 8.65e-07     &     3.60     & 2.43e-07     &     3.99     & 7.53e-08     &     4.37    \\ 
 	& 64        & 3.34e-07     &     3.31     & 6.73e-08     &     3.70     & 1.43e-08     &     4.11     & 3.34e-09     &     4.52    \\ 
 	& 128         & 3.29e-08     &     3.34     & 5.05e-09     &     3.74     & 8.10e-10     &     4.14     & 1.42e-10     &     4.56    \\ 
 	\hline
 	Order  &(Averaged)&& 3.29 && 3.68 && 4.08 && 4.48\\
 		\hline 
 		$\alpha+\sigma\wedge \sigma^{\ast}+2$  &&& 3.40 && 3.80 && 4.20 && 4.60\\
 	\hline 
 	$\alpha+\sigma\wedge \sigma^{\ast}+1$  &(Theorem \ref{thm:spectral-error-estimate-2})&& 2.40 && 2.80 && 3.20 && 3.60\\
 	\hline
 	\hline
 \end{tabular}
\end{table}

  \begin{table}[!htb]
  	\centering
  	\caption{  Convergence orders and errors  of the spectral Galerkin  method \eqref{weak-form-scheme_3} for Example \ref{spectral-ex-3}   with  $f= (1-x^2)^{0.5}\sin x.$   }\label{table:spectral-G-exam-3-case-1}
  	\begin{tabular}{cccccccccc}
  		\hline\hline
  		&	& \multicolumn{2}{c}{ $\alpha=1.2$}&   \multicolumn{2}{c}{ $\alpha=1.4$}  &\multicolumn{2}{c}{ $\alpha=1.6$}&\multicolumn{2}{c}{ $\alpha=1.8$} \\
  		\cline{3-4} \cline{5-6} \cline{7-8}\cline{9-10}
  		&	$N$ &  $E_2(N) $ & rate &  $E_2(N) $ & rate  &$E_2(N) $ & rate & $E_2(N) $ & rate  \\
  		\hline
  		$\theta=0.7$& 16         & 6.54e-05     &          & 1.70e-05     &          & 6.14e-06     &          & 2.71e-06     &          \\ 
  		& 32       & 6.08e-06     &     3.43     & 1.25e-06     &     3.77     & 3.55e-07     &     4.11     & 1.27e-07     &     4.41    \\ 
  		& 64       & 5.42e-07     &     3.49     & 8.52e-08     &     3.87     & 1.90e-08     &     4.23     & 5.45e-09     &     4.54    \\ 
  		& 128        & 4.74e-08     &     3.52     & 5.64e-09     &     3.92     & 9.71e-10     &     4.29     & 2.23e-10     &     4.61    \\ 
  		\hline
  		Order  &(Averaged)&& 3.48 && 3.85 && 4.21 && 4.52\\
  		\hline
  		$\alpha+\sigma \wedge \sigma^{\ast}+2$  & && 3.52 && 4.94 && 4.31 && 4.66\\ 
  		\hline
  		$\sigma \wedge \sigma^{\ast}+1$& (Theorem \ref{thm:spectral-error-estimate-1}) && 1.32 && 1.54 && 1.71 && 1.86\\
  		\hline \hline
  		$\theta=1$& 16         & 5.52e-05     &          & 1.86e-05     &          & 6.78e-06     &          & 2.77e-06     &          \\ 
  		& 32       & 5.10e-06     &     3.44     & 1.47e-06     &     3.66     & 4.18e-07     &     4.02     & 1.33e-07     &     4.38    \\  
  		&64       & 4.18e-07     &     3.61     & 1.09e-07     &     3.75     & 2.38e-08     &     4.13     & 5.83e-09     &     4.51    \\ 
  		&128       & 3.20e-08     &     3.70     & 7.84e-09     &     3.80     & 1.31e-09     &     4.19     & 2.44e-10     &     4.58    \\ 
  		\hline
  		Order  &(Averaged)&& 3.58 && 3.74 && 4.11 && 4.49\\  
  		\hline
  		$\alpha+\sigma \wedge \sigma^{\ast}+2$  & && 3.52 && 3.94 && 4.31 && 4.66\\  
  		\hline
  		$\sigma \wedge \sigma^{\ast}+1$& (Theorem \ref{thm:spectral-error-estimate-1}) && 1.20 && 1.40 && 1.60 && 1.80\\	
  		
  		\hline
  		\hline
  	\end{tabular}
  \end{table}

  \begin{table}[!htb]
  	\centering
  	\caption{  Convergence orders and errors of the  spectral Petrov-Galerkin method \eqref{PG-scheme} for Example \ref{spectral-ex-3}  with $f= (1-x^2)^{-0.4}\sin x.$   }\label{table:spectral-PG-exam-3-case-3}
  	\begin{tabular}{cccccccccc}
  		\hline\hline
  		&	& \multicolumn{2}{c}{ $\alpha=1.2$}&   \multicolumn{2}{c}{ $\alpha=1.4$}  &\multicolumn{2}{c}{ $\alpha=1.6$}&\multicolumn{2}{c}{ $\alpha=1.8$} \\
  		\cline{3-4} \cline{5-6} \cline{7-8}\cline{9-10}
  		&	$N$ &  $E_2(N) $ & rate &  $E_2(N) $ & rate  &$E_2(N) $ & rate & $E_2(N) $ & rate  \\
  		\hline
  		$\theta=0.5$ &16           & 4.83e-03     &          & 1.03e-03     &          & 3.11e-04     &          & 1.12e-04     &          \\ 
  		&32         & 1.27e-03     &     1.92     & 2.26e-04     &     2.18     & 5.67e-05     &     2.45     & 1.69e-05     &     2.72    \\ 
  		&64      & 3.28e-04     &     1.96     & 4.79e-05     &     2.24     & 9.86e-06     &     2.53     & 2.42e-06     &     2.81    \\ 
  		&128        & 8.31e-05     &     1.98     & 9.92e-06     &     2.27     & 1.67e-06     &     2.56     & 3.34e-07     &     2.85    \\ 
  		\hline
  		Order & (Averaged) && 1.95 && 2.23 && 2.51 && 2.79\\  \hline 
  		$\alpha+\sigma \wedge \sigma^{\ast} +0.2$  & (Theorem \ref{thm:spectral-error-estimate-2})
  		&	& 2.00 && 2.30 && 2.60 && 2.90 \\ \hline \hline  
  		
  		$\theta=0.7$ &	16         & 5.75e-03     &          & 1.13e-03     &          & 3.20e-04     &          & 1.12e-04     &          \\ 
  		&	32       & 1.86e-03     &     1.63     & 2.73e-04     &     2.05     & 6.05e-05     &     2.40     & 1.72e-05     &     2.71    \\ 
  		&	64      & 5.85e-04     &     1.67     & 6.37e-05     &     2.10     & 1.09e-05     &     2.47     & 2.47e-06     &     2.80    \\ 
  		&	128        & 1.80e-04     &     1.70     & 1.47e-05     &     2.12     & 1.94e-06     &     2.50     & 3.45e-07     &     2.84    \\ 
  		\hline
  		Order  &   (Averaged)                      && 1.67 && 2.09 && 2.46 && 2.78\\
  		\hline
  		$\alpha+\sigma \wedge \sigma^{\ast}+0.2 $  &   			(Theorem \ref{thm:spectral-error-estimate-2})
  		&& 1.72 && 2.14 && 2.51 && 2.86\\
  		\hline \hline
  		
  		$\theta=1$ &	16         & 4.42e-03     &          & 1.16e-03     &          & 3.43e-04     &          & 1.15e-04     &          \\ 
  		&	32       & 1.56e-03     &     1.50     & 3.15e-04     &     1.88     & 7.09e-05     &     2.27     & 1.82e-05     &     2.67    \\ 
  		& 64 & 5.33e-04     &     1.55     & 8.23e-05     &     1.94     & 1.40e-05     &     2.33     & 2.71e-06     &     2.74    \\ 
  		&	128         & 1.78e-04     &     1.58     & 2.10e-05     &     1.97     & 2.72e-06     &     2.37     & 3.95e-07     &     2.78    \\ 
  		\hline
  		Order  &   (Averaged)                   && 1.55 && 1.93 && 2.32 && 2.73\\
  		\hline
  		$\alpha+\sigma \wedge \sigma^{\ast}+0.2 $    & (Theorem \ref{thm:spectral-error-estimate-2}) && 1.60 && 2.00 && 2.40 && 2.80\\
  		\hline
  		\hline
  	\end{tabular}
  \end{table}

  \begin{table}[!htb]
  	\centering
  	\caption{  Convergence orders and errors of the spectral Galerkin  method \eqref{weak-form-scheme_3} for Example \ref{spectral-ex-3}    with  $f= (1-x^2)^{-0.4}\sin x.$  }\label{table:spectral-G-exam-3-case-3}
  	\begin{tabular}{cccccccccc}
  		\hline\hline
  		&	& \multicolumn{2}{c}{ $\alpha=1.2$}&   \multicolumn{2}{c}{ $\alpha=1.4$}  &\multicolumn{2}{c}{ $\alpha=1.6$}&\multicolumn{2}{c}{ $\alpha=1.8$} \\
  		\cline{3-4} \cline{5-6} \cline{7-8}\cline{9-10}
  		&	$N$ &  $E_2(N) $ & rate &  $E_2(N) $ & rate  &$E_2(N) $ & rate & $E_2(N) $ & rate  \\

  		\hline
  		$\theta=0.7$& 16      & 7.90e-03     &          & 1.58e-03     &          & 4.90e-04     &         & 1.94e-04     &         \\ 
  	
  		& 32           & 2.41e-03     &     1.71     & 3.68e-04     &     2.10     & 9.11e-05     &     2.43     & 2.95e-05     &     2.72    \\ 
  	 
  		& 	64        & 7.15e-04     &     1.75     & 8.30e-05     &     2.15     & 1.62e-05     &     2.49     & 4.23e-06     &     2.80    \\ 
  		&128       & 2.08e-04     &     1.79     & 1.85e-05     &     2.17     & 2.83e-06     &     2.52     & 5.89e-07     &     2.85    \\ 
  			\hline
  			Order  &(Averaged)&& 1.75 && 2.14 && 2.48 && 2.79\\ 
  			\hline
  		 $\alpha+\sigma \wedge \sigma^{\ast}+0.2$  &&& 1.72 && 2.14 && 2.51 && 2.86\\
  		\hline
  		 $\sigma \wedge \sigma^{\ast}+0.2$  &(Theorem \ref{thm:spectral-error-estimate-1})&& 0.52 && 0.74 && 0.91 && 1.06\\
  		 \hline \hline 
  		$\theta=1$ &16       & 7.05e-03     &          & 1.81e-03     &         & 5.47e-04     &         & 1.99e-04     &         \\ 
  		&32       & 2.37e-03     &     1.57     & 4.60e-04     &     1.97     & 1.08e-04     &     2.34     & 3.09e-05     &     2.69    \\ 
  		&64       & 7.69e-04     &     1.63     & 1.13e-04     &     2.02     & 2.04e-05     &     2.40     & 4.53e-06     &     2.77    \\ 
  		&128       & 2.40e-04     &     1.68     & 2.72e-05     &     2.06     & 3.80e-06     &     2.43     & 6.48e-07     &     2.81    \\ 
  		
  			\hline
  			Order  &(Averaged)&& 1.63 && 2.02 && 2.39 && 2.76\\
  				\hline
  				$\alpha+\sigma \wedge \sigma^{\ast}+0.2$  & && 1.60 && 2.00 && 2.40 && 2.80\\
  				\hline
  		$\sigma \wedge \sigma^{\ast}+0.2$   &(Theorem \ref{thm:spectral-error-estimate-1})&& 0.40 && 0.60 && 0.80 && 1.00\\	
  		
  		\hline
  		\hline
  	\end{tabular}
  \end{table}

 \section{Conclusion}
 In this paper, we discuss the regularity of the two-sided fractional diffusion equations with Riemann-Liouville operators under the homogeneous Dirichlet  boundary conditions.  
 Writing  $u =\omega^{\sigma,\sigma^{\ast}}\tilde{u}$, we find that the regularity index of $\tilde{u}$  is shown to be $2\alpha+1$.
 We also validate our finding by considering two numerical methods:  spectral Galerkin and  Petrov-Galerkin  methods.
 With the regularity index, we  showed the optimal error estimate
 for both methods when $\theta=0.5$. 
For  $\theta\neq 0.5$,   we obtained the optimal error estimate
  when the reaction coefficient $\mu$ is small.

  The error estimate for the Galerkin method are not optimal
  and  the estimate for the  Petrov-Galerkin method requires
  some additional condition on $\mu$. Further improvement in
  the error estimates are needed.   The analysis in this paper can be extended     to  FDEs with different low-order terms, such as FDEs with an  advection term.

 \appendix
 \section{Some useful relations of Jacobi polynomials}
 \label{sec:appendix-jacobi-polynomials}

 The following   relations  hold  for Jacobi polynomials $P_{n}^{\alpha,\beta}(x)$, see e.g.  \cite[Chapter 2]{Askey-B75}, 
 \begin{eqnarray}
 \partial_x P_{n}^{\alpha,\beta}(x)& =& \frac{n+\alpha+\beta+1}{2} P_{n-1}^{\alpha+1,\beta+1}(x), \quad \alpha,\beta>-1.\label{eq:jacobi-derivatives}
 \end{eqnarray}
 By \eqref{eq:jacobi-derivatives}, we have 
 \begin{equation}\label{eq:relation-different-index}
 \partial_x^{l} P_{n}^{\alpha,\beta}(x) = d_{n,l}^{\alpha,\beta} P_{n-l}^{\alpha+l,\beta+l}(x), \quad \alpha,\beta>-1, n\geq l,\quad d_{n,l}^{\alpha,\beta}=\frac{\Gamma{(n+\alpha+\beta+l+1)}}{2^l\Gamma{(n+\alpha+\beta+1)}} .
 \end{equation}

 \begin{theorem}\label{with_its_derivative_relationship} 
 	The Jacobi polynomials $P_{n}^{\alpha,\beta}(x)$  satisfy
 	\begin{equation} P_{n}^{\alpha,\beta} =\widehat{A}_n^{\alpha,\beta}\partial_xP_{n-1}^{\alpha,\beta}+\widehat{B}_n^{\alpha,\beta}\partial_xP_{n}^{\alpha,\beta}
 	+\widehat{C}_{n}^{\alpha,\beta}\partial_xP_{n+1}^{\alpha,\beta},\,
      n\geq 0,
 	\end{equation}
    where $P_{-1}^{\alpha,\beta}\equiv 0$ and
 	\begin{eqnarray*}
 		&&\widehat{A}_n^{\alpha,\beta}=\frac{-2(n+\alpha)(n+\beta)}{(n+\alpha+\beta)(2n+\alpha+\beta)(2n+\alpha+\beta+1)},\\
 		&&\widehat{B}_n^{\alpha,\beta}=\frac{2(\alpha-\beta)}{(2n+\alpha+\beta)(2n+\alpha+\beta+2)},\qquad \widehat{C}_n^{\alpha,\beta}=\frac{2(n+\alpha+\beta+1)}{(2n+\alpha+\beta+1)(2n+\alpha+\beta+2)}.
 	\end{eqnarray*}
 \end{theorem}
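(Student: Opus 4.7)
The plan is to apply the derivative identity \eqref{eq:jacobi-derivatives}, namely $\partial_x P_m^{\alpha,\beta}(x)=\frac{m+\alpha+\beta+1}{2}P_{m-1}^{\alpha+1,\beta+1}(x)$, to recast the claimed identity as an equivalent expansion of $P_n^{\alpha,\beta}$ in the shifted Jacobi basis $\{P_k^{\alpha+1,\beta+1}\}_{k\geq 0}$. Under the substitution
\[
c_{n-2}=\tfrac{n+\alpha+\beta}{2}\widehat{A}_n^{\alpha,\beta},\quad c_{n-1}=\tfrac{n+\alpha+\beta+1}{2}\widehat{B}_n^{\alpha,\beta},\quad c_n=\tfrac{n+\alpha+\beta+2}{2}\widehat{C}_n^{\alpha,\beta},
\]
the theorem is equivalent to
\[
P_n^{\alpha,\beta}(x)=c_{n-2}P_{n-2}^{\alpha+1,\beta+1}(x)+c_{n-1}P_{n-1}^{\alpha+1,\beta+1}(x)+c_n P_n^{\alpha+1,\beta+1}(x),
\]
so the task reduces to proving this truncated expansion and identifying its three coefficients.

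First, I would show that the expansion of $P_n^{\alpha,\beta}$ in the orthogonal basis $\{P_k^{\alpha+1,\beta+1}\}_{k=0}^n$ has nonzero coefficients only for $k\in\{n-2,n-1,n\}$. Because the underlying weight is $\omega^{\alpha+1,\beta+1}=(1-x^2)\omega^{\alpha,\beta}$, the Fourier coefficients take the form
\[
c_k=\frac{1}{h_k^{\alpha+1,\beta+1}}\int_{-1}^1 P_n^{\alpha,\beta}(x)\,P_k^{\alpha+1,\beta+1}(x)\,(1-x^2)\,\omega^{\alpha,\beta}(x)\,dx,
\]
and since $(1-x^2)P_k^{\alpha+1,\beta+1}(x)$ is a polynomial of degree $k+2$, the orthogonality of $P_n^{\alpha,\beta}$ against all polynomials of degree strictly less than $n$ in the weight $\omega^{\alpha,\beta}$ forces $c_k=0$ whenever $k\leq n-3$.

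Next, I would determine $c_{n-2},c_{n-1},c_n$ from three independent linear conditions: evaluating both sides at $x=1$ via $P_n^{\alpha,\beta}(1)=\tfrac{\Gamma(n+\alpha+1)}{n!\,\Gamma(\alpha+1)}$, at $x=-1$ via $P_n^{\alpha,\beta}(-1)=(-1)^n\tfrac{\Gamma(n+\beta+1)}{n!\,\Gamma(\beta+1)}$, and matching leading coefficients via $[x^n]P_n^{\alpha,\beta}=\tfrac{\Gamma(2n+\alpha+\beta+1)}{2^n n!\,\Gamma(n+\alpha+\beta+1)}$. Solving the resulting $3\times 3$ linear system and then dividing by the derivative prefactors $\tfrac{n+\alpha+\beta+j+1}{2}$ for $j=-1,0,1$ is designed to recover the claimed closed forms for $\widehat{A}_n^{\alpha,\beta}$, $\widehat{B}_n^{\alpha,\beta}$, $\widehat{C}_n^{\alpha,\beta}$.

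The main obstacle is purely algebraic: the Gamma-function ratios produced by the linear system have to be simplified into the compact quotients in the theorem statement, which is lengthy and error-prone. An alternative that sidesteps this bookkeeping is to invoke the standard contiguous relation between Jacobi polynomials of parameters $(\alpha,\beta)$ and $(\alpha+1,\beta+1)$ from a reference such as \cite{Askey-B75} and then divide through by $\tfrac{n+\alpha+\beta+j+1}{2}$ to pass to the derivative form. The boundary cases $n=0$ and $n=1$ are verified by direct substitution, with the convention $P_{-1}^{\alpha,\beta}\equiv 0$ ensuring that the term carrying $\widehat{A}_n$ vanishes when needed.
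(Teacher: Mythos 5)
Your proposal is correct, but note that the paper does not actually prove Theorem~\ref{with_its_derivative_relationship}: it is stated in the appendix as a known contiguous relation, with only a blanket citation to \cite[Chapter 2]{Askey-B75} covering the Jacobi identities, and is then \emph{used} to derive Corollary~\ref{cor:different-index} via \eqref{eq:jacobi-derivatives}. Your argument runs this derivation in reverse: you first establish the expansion $P_n^{\alpha,\beta}=c_{n-2}P_{n-2}^{\alpha+1,\beta+1}+c_{n-1}P_{n-1}^{\alpha+1,\beta+1}+c_nP_n^{\alpha+1,\beta+1}$ (which is exactly Corollary~\ref{cor:different-index}) and then divide out the prefactors from \eqref{eq:jacobi-derivatives} to recover the theorem. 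The two key steps are both sound: the truncation $c_k=0$ for $k\le n-3$ follows correctly from $\omega^{\alpha+1,\beta+1}=(1-x^2)\omega^{\alpha,\beta}$ and the degree count on $(1-x^2)P_k^{\alpha+1,\beta+1}$; and the three remaining coefficients are pinned down by a genuinely solvable system, since the leading coefficient determines $c_n$ alone and the resulting $2\times2$ system in $(c_{n-2},c_{n-1})$ from the values at $x=\pm1$ has determinant $(-1)^{n-1}\bigl[\binom{n-1+\alpha}{n-2}\binom{n+\beta}{n-1}+\binom{n+\alpha}{n-1}\binom{n-1+\beta}{n-2}\bigr]\neq0$ for $\alpha,\beta>-1$. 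What your route buys is a self-contained verification of the coefficient formulas rather than an appeal to the literature; the cost is the Gamma-function bookkeeping you flag, which you would need to carry through (or replace by the cited contiguous relation, at which point the proof collapses to the paper's one-line appeal to a reference). The edge cases $n=0,1$, where $\widehat{A}_n$ may be formally undefined but multiplies a vanishing term under the convention $P_{-1}^{\alpha,\beta}\equiv0$, are handled adequately by your direct substitution.
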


The relation \eqref{eq:jacobi-derivatives} and Theorem \ref{with_its_derivative_relationship} lead to the following result.
 \begin{corollary}\label{cor:different-index}
 	The Jacobi polynomials $P_{n}^{\alpha,\beta}(x)$  satisfy
 	\begin{eqnarray}
 	P_n^{\alpha,\beta}=A_{n}^{\alpha,\beta}P_{n-2}^{\alpha+1,\beta+1}+B_{n}^{\alpha,\beta}P_{n-1}^{\alpha+1,\beta+1}+
      C_{n}^{\alpha,\beta}P_{n}^{\alpha+1,\beta+1},\, n\geq0,
 	\end{eqnarray}	
 	where we let    $A_0^{\alpha,\beta}=A_1^{\alpha,\beta}=B_0^{\alpha,\beta}=0$  and 
 	$P_{-2}^{\alpha+1,\beta+1}=P_{-1}^{\alpha+1,\beta+1}=0$ and  
 	\begin{eqnarray*}
 		&&A_n^{\alpha,\beta}=-\frac{(n+\alpha)(n+\beta)}{(2n+\alpha+\beta)(2n+\alpha+\beta+1)},  \quad 
 		 B_n^{\alpha,\beta}=\frac{(\alpha-\beta)(n+\alpha+\beta+1)}{(2n+\alpha+\beta)(2n+\alpha+\beta+2)},\\
 		&&C_n^{\alpha,\beta}=\frac{(n+\alpha+\beta+1)(n+\alpha+\beta+2)}{(2n+\alpha+\beta+1)(2n+\alpha+\beta+2)}.
 	\end{eqnarray*}	
 \end{corollary}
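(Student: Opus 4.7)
The plan is to deduce Corollary \ref{cor:different-index} directly from Theorem \ref{with_its_derivative_relationship} by converting each derivative $\partial_x P_m^{\alpha,\beta}$ on the right-hand side into a Jacobi polynomial with shifted indices via the differentiation formula \eqref{eq:jacobi-derivatives}. Specifically, \eqref{eq:jacobi-derivatives} gives $\partial_x P_{n-1}^{\alpha,\beta}=\frac{n+\alpha+\beta}{2}P_{n-2}^{\alpha+1,\beta+1}$, $\partial_x P_{n}^{\alpha,\beta}=\frac{n+\alpha+\beta+1}{2}P_{n-1}^{\alpha+1,\beta+1}$, and $\partial_x P_{n+1}^{\alpha,\beta}=\frac{n+\alpha+\beta+2}{2}P_{n}^{\alpha+1,\beta+1}$, so the three-term derivative expansion of Theorem \ref{with_its_derivative_relationship} becomes a three-term expansion in the shifted-index basis $\{P_{n-2}^{\alpha+1,\beta+1},P_{n-1}^{\alpha+1,\beta+1},P_n^{\alpha+1,\beta+1}\}$, which is precisely the form asserted.

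After this substitution, I would verify that the resulting coefficients coincide with $A_n^{\alpha,\beta}$, $B_n^{\alpha,\beta}$, and $C_n^{\alpha,\beta}$ as listed. For the first coefficient, $\widehat{A}_n^{\alpha,\beta}\cdot\frac{n+\alpha+\beta}{2}$: the factor $(n+\alpha+\beta)$ cancels the matching factor in the denominator of $\widehat{A}_n^{\alpha,\beta}$ and the $1/2$ halves the numerator, producing $-\frac{(n+\alpha)(n+\beta)}{(2n+\alpha+\beta)(2n+\alpha+\beta+1)}=A_n^{\alpha,\beta}$. Analogous cancellations give $\widehat{B}_n^{\alpha,\beta}\cdot\frac{n+\alpha+\beta+1}{2}=B_n^{\alpha,\beta}$ and $\widehat{C}_n^{\alpha,\beta}\cdot\frac{n+\alpha+\beta+2}{2}=C_n^{\alpha,\beta}$. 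These steps are purely routine algebraic simplifications.

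Finally, I would address the boundary cases at small $n$. For $n=0$, the derivatives $\partial_x P_{-1}^{\alpha,\beta}$ and $\partial_x P_{0}^{\alpha,\beta}$ vanish identically (by the convention $P_{-1}^{\alpha,\beta}\equiv 0$ and the fact that $P_0^{\alpha,\beta}$ is constant), which is consistent with the conventions $P_{-2}^{\alpha+1,\beta+1}\equiv P_{-1}^{\alpha+1,\beta+1}\equiv 0$ and the declared values $A_0^{\alpha,\beta}=B_0^{\alpha,\beta}=0$; for $n=1$, the single term $\partial_x P_0^{\alpha,\beta}$ still vanishes, and $P_{-1}^{\alpha+1,\beta+1}\equiv 0$ together with $A_1^{\alpha,\beta}=0$ takes care of the remaining missing term. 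The main obstacle is nothing more than bookkeeping the algebraic cancellations carefully; there is no conceptual difficulty, since both ingredients, Theorem \ref{with_its_derivative_relationship} and identity \eqref{eq:jacobi-derivatives}, are already available.
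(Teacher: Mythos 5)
Your proposal is correct and follows exactly the route the paper intends: the paper states only that \eqref{eq:jacobi-derivatives} and Theorem \ref{with_its_derivative_relationship} "lead to" the corollary, and your substitution of $\partial_x P_{m}^{\alpha,\beta}=\frac{m+\alpha+\beta+1}{2}P_{m-1}^{\alpha+1,\beta+1}$ into the three-term identity, together with the coefficient cancellations and the $n=0,1$ boundary conventions, is precisely that derivation carried out in full.
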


 \begin{lemma}\label{lem:integral-estimate}
 	For any $k\geq n\geq 0$, it holds that  $\abs{X_k^n}\leq C$ where  
 	\begin{eqnarray}
 	X_k^n:=\frac{(P_k^{\alpha+1,\beta+1},P_n^{\alpha,\beta})_{\omega^{\alpha,\beta}}}{h_n^{\alpha,\beta}},\quad  \alpha>-1, \quad  \beta >-1.
 	\end{eqnarray}
 	 and  $ h_n^{\alpha,\beta}$ is defined in \eqref{eq:orthogonality-of-jacobi-poly-normalized-constant}.   
 \end{lemma}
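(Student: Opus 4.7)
The plan is to recognize $X_k^n$ as a connection coefficient — namely, the $n$-th coefficient in the expansion $P_k^{\alpha+1,\beta+1} = \sum_{n=0}^{k} X_k^n P_n^{\alpha,\beta}$ — and to derive an explicit closed-form expression via Rodrigues' formula, from which the uniform bound will follow by standard Gamma-function asymptotics.

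First I would insert the Rodrigues representation $(1-x)^\alpha(1+x)^\beta P_n^{\alpha,\beta}(x) = \frac{(-1)^n}{2^n n!}\frac{d^n}{dx^n}[(1-x)^{\alpha+n}(1+x)^{\beta+n}]$ into the inner product and integrate by parts $n$ times. The boundary terms vanish because the $j$-th derivative of $(1-x)^{\alpha+n}(1+x)^{\beta+n}$ still carries positive powers of $1\pm x$ for $j<n$; when $\alpha$ or $\beta$ lies in $(-1,0)$ this is justified by truncating to $[-1+\epsilon,1-\epsilon]$ and letting $\epsilon\to 0^+$. After applying \eqref{eq:relation-different-index} to the surviving derivative, $\partial_x^n P_k^{\alpha+1,\beta+1} = d_{k,n}^{\alpha+1,\beta+1} P_{k-n}^{\alpha+n+1,\beta+n+1}$, the problem reduces to computing $J_m := \int_{-1}^1 P_m^{\alpha+n+1,\beta+n+1}(x)(1-x)^{\alpha+n}(1+x)^{\beta+n}\,dx$ with $m=k-n$.

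Next I would evaluate $J_m$ by expanding $P_m^{\alpha+n+1,\beta+n+1}$ in its standard binomial representation, integrating term by term against the Beta integral, and collapsing the resulting finite sum using the partial-fraction identity $\frac{1}{(A+j)(B+m-j)} = \frac{1}{A+B+m}\bigl(\frac{1}{A+j}+\frac{1}{B+m-j}\bigr)$ together with the elementary evaluations $\sum_{j=0}^m\binom{m}{j}\frac{(-1)^j}{A+j} = \frac{m!\,\Gamma(A)}{\Gamma(A+m+1)}$. This yields a two-term closed form in Gamma functions in which the alternating sign $(-1)^m$ reflects the $\alpha\leftrightarrow\beta$ symmetry across the interval. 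Combining with the prefactor $d_{k,n}^{\alpha+1,\beta+1}/(2^n n!)$ and dividing by $h_n^{\alpha,\beta}$ from \eqref{eq:orthogonality-of-jacobi-poly-normalized-constant}, I would obtain a fully explicit expression for $X_k^n$ as a ratio of Gamma functions.

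Finally, I would verify $|X_k^n|\le C$ by applying the asymptotic $\Gamma(x+a)/\Gamma(x+b)\sim x^{a-b}$ to each ratio appearing in the combined formula and keeping track of cancellations across the prefactor, the two summands in $J_m$, and the normalization. As a sanity check, in the Legendre case $\alpha=\beta=0$ the formula collapses to $X_k^n = 2(2n+1)/(k+2)$ when $k-n$ is even (and vanishes otherwise), which is bounded by $4$ uniformly in $k\ge n\ge 0$. The main obstacle is precisely this Gamma-function bookkeeping in the general case: individual factors grow rapidly in both $k$ and $n$, and the uniform bound is recovered only after the cancellations are tracked carefully; a secondary technicality is justifying the vanishing of the boundary terms in the integration by parts when $\alpha$ or $\beta$ lies in $(-1,0)$, which can be handled by the $\epsilon$-truncation limit described above.
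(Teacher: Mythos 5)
Your route --- identifying $X_k^n$ as a connection coefficient in the expansion $P_k^{\alpha+1,\beta+1}=\sum_{n\le k}X_k^nP_n^{\alpha,\beta}$, extracting a closed form via Rodrigues' formula, integration by parts, Beta integrals, and a partial-fraction/Vandermonde collapse, and then bounding by Gamma asymptotics --- is genuinely different from the paper's. It is viable in principle: the terminating ${}_3F_2$ that arises for the parameter shift $(\alpha,\beta)\to(\alpha+1,\beta+1)$ has a denominator parameter exceeding a numerator parameter by one, so it does split into two Gauss-summable pieces exactly as you describe, and a two-term Gamma-ratio formula for $X_k^n$ exists. The paper avoids any explicit formula: it takes the three-term relation of Corollary \ref{cor:different-index}, $P_k^{\alpha,\beta}=A_k^{\alpha,\beta}P_{k-2}^{\alpha+1,\beta+1}+B_k^{\alpha,\beta}P_{k-1}^{\alpha+1,\beta+1}+C_k^{\alpha,\beta}P_k^{\alpha+1,\beta+1}$, pairs it with $P_n^{\alpha,\beta}$ in $L^2_{\omega^{\alpha,\beta}}$ to get the recurrence $\delta_{nk}=A_k^{\alpha,\beta}X_{k-2}^n+B_k^{\alpha,\beta}X_{k-1}^n+C_k^{\alpha,\beta}X_k^n$, and observes that the associated $2\times2$ transfer matrix has $\infty$-norm $\max\{1,|p_k|+q_k\}\le 1$ since $|p_k|+q_k=1-\bigl(2\min(\alpha,\beta)+2\bigr)/k+\mathcal{O}(k^{-2})$ and $\min(\alpha,\beta)>-1$; boundedness follows from monotonicity of $\max\{|X_k^n|,|X_{k+1}^n|\}$ in $k$, with no Gamma-function bookkeeping at all.

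That said, as written your proposal has a genuine gap at the decisive step. The entire content of the lemma is the bound $|X_k^n|\le C$ uniformly over the two-parameter wedge $k\ge n\ge 0$, and this is precisely what you defer: you never actually produce the closed form for general $(\alpha,\beta)$, and you label the verification of boundedness ``the main obstacle'' without carrying it out. The only case you check, $\alpha=\beta=0$, is degenerate --- there $B_n^{\alpha,\beta}=0$, half the coefficients vanish, and a single Gamma ratio survives --- whereas in the asymmetric case both parities contribute and the two competing Gamma ratios each contain factors growing in $n$; their uniform boundedness over $n\le k$ is exactly the nontrivial claim and cannot be waved through by citing $\Gamma(x+a)/\Gamma(x+b)\sim x^{a-b}$ without exhibiting the combined exponent. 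Until that expression is written down and shown to be $\mathcal{O}(1)$ uniformly in $n\le k$, the lemma is not proved. (Your secondary point about the boundary terms when $\alpha$ or $\beta$ lies in $(-1,0)$ is fine: for $j<n$ the $j$-th derivative of $(1-x)^{\alpha+n}(1+x)^{\beta+n}$ still vanishes at $\pm1$, so the $\epsilon$-truncation argument closes that issue.)
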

 \begin{proof}
 	By  \eqref{eq:spectral-different-index}, we get 
 	\begin{eqnarray}
 	\delta_{nk}=A_{k}^{\alpha,\beta}X_{k-2}^n+B_{k}^{\alpha,\beta}X_{k-1}^n+C_{k}^{\alpha,\beta}X_k^n.
 	\end{eqnarray}
 	Thus we have 
 	$$X_n^n=\frac{1}{C_n^{\alpha,\beta}},\quad X_{n+1}^n=-\frac{B_{n+1}^{\alpha,\beta}}{C_{n+1}^{\alpha,\beta}}X_n^n,\quad X_{k+2}^n=p_kX_{k+1}^n+q_kX_{k}^n,\quad k\geq n.$$
 	where  $p_k=-\frac{B_{k+2}^{\alpha,\beta}}{C_{k+2}^{\alpha,\beta}}$ and $q_k=-\frac{A_{k+2}^{\alpha,\beta}}{C_{k+2}^{\alpha,\beta}}.$
 	Denote $Y_k^n=(X_{k+1}^n,X_k^n)^{\top}$ and  $A_k=(p_k,q_k;1,0).$   Then we have $Y_{k+1}^n=A_kY_k^n.$ It follows that 
 	$$\|Y^n_{k+1}\|_{\infty}=\|A_kY_k^n\|_{\infty} \leq \|A_k\|_{\infty} \|Y^n_{k}\|_{\infty} = \max \{|1,|p_k|+q_k \} \|Y_k^n\|_{\infty}, $$
 	where $\|Y_k^n\|_{\infty}=\max\{|X_k^n|,|X_{k+1}^n| \}.$
 	Recalling  $A_k^{\alpha,\beta}$,  $B_k^{\alpha,\beta}$ and $C_k^{\alpha,\beta}$  in Corollary  \ref{cor:different-index}, we have  for $k\geq 2$
 	\begin{eqnarray*}
 		&&|p_{k-2}|=\frac{|B_k^{\alpha,\beta}|}{C_k^{\alpha,\beta}}= \frac{|\alpha-\beta|(2k+\alpha+\beta+1)}{(2k+\alpha+\beta)(k+\alpha+\beta+2)}=\frac{|\alpha-\beta|}{k}+\mathcal{O}(\frac{1}{k^2}),\quad \text{ and }\\
 		&& q_{k-2}=-\frac{A_k^{\alpha,\beta}}{C_k^{\alpha,\beta}}=\frac{(k+\alpha)(k+\beta)(2k+\alpha+\beta+2)}{(k+\alpha+\beta+1)(k+\alpha+\beta+2)(2k+\alpha+\beta)}=1-\frac{\alpha+\beta+2}{k}+\mathcal{O}(\frac{1}{k^2}).
 	\end{eqnarray*}
 	Thus we arrive at
 	$$|p_k|+q_k= 1-\frac{\alpha+\beta+2-|\alpha-\beta|}{k}+\mathcal{O}(\frac{1}{k^2})=1-\frac{2\min(\alpha,\beta)+2}{k}+\mathcal{O}(\frac{1}{k^2}).$$
 	Since $\alpha>-1$ and $\beta>-1$,     $|p_k|+q_k\leq1.$  
 	Thus for any $k\geq n\geq 0$, we have $\|Y_{k+1}\|_{\infty}\leq  \|Y_k\|_{\infty},$ which leads to the desired result. 
 \end{proof}

The following asymptotic formula for a ratio of two gamma functions holds  
\begin{equation}\label{estimate-gamma}
\lim_{n\rightarrow \infty}\frac{\Gamma(n+\delta)}{n^{\delta-\gamma}\Gamma(n+\gamma)}=\lim_{n\rightarrow \infty}[1+\frac{(\delta-\gamma)(\delta+\gamma-1)}{2n}+\mathcal{O}(n^{-2})]=1.
\end{equation}

\section*{Acknowledgments}
Z. Hao would like to acknowledge the support by National Natural Science Foundation of
China (No. 11671083),  China Scholarship Council (No. 201506090065),  the Research and
 Innovation Project for College Graduates of Jiangsu Province (No. KYLX\_0081) and
 Natural Science Youth Foundation of Jiangsu Province (BK20160660).
G. Lin would like to acknowledge the support by the NSF Grant DMS-1555072.
Z. Zhang  would like to acknowledge the support by MURI and  a start-up fund from WPI.

\end{document}